%% file: paper.tex
\documentclass[oneside]{amsart} 
\usepackage[percent]{overpic}
\usepackage{esint}

\usepackage{paralist}
\usepackage{graphics} 
\usepackage{epsfig} 
\usepackage{xspace}
\usepackage{marginnote} 

\usepackage{array}
\usepackage{threeparttable}
\usepackage{changepage}

\usepackage{appendix}
\usepackage[english]{babel}
\usepackage[T1]{fontenc} 
\usepackage[utf8]{inputenc} 
\usepackage{geometry}
\usepackage{graphicx}
\usepackage{tabto}
\usepackage{amsmath}
\usepackage{calligra}
\usepackage{amsthm}
\usepackage{bbm}
\usepackage{cancel}
\usepackage{mathrsfs}
\usepackage{mathtools}
\usepackage{version}
\usepackage{arydshln}
\usepackage[math]{cellspace}
\usepackage{multicol}
\usepackage{bm}
\usepackage{subfig}

\usepackage{tikz}
\usepackage{amssymb}
\usepackage{algorithm,algpseudocode}

\long\def\NOTE#1{}

\newtheorem{teo}{Theorem}

\newtheorem{problem}{Problem}
\newtheorem{altproblem}{Problem}

\newtheorem{lem}{Lemma}
\newtheorem{prop}{Proposition}
\newtheorem{rem}{Remark}

\newtheorem{corollary}{Corollary}

\include{definitions}
\newcommand{\red}{\color{black}}
\newcommand{\black}{\color{black}}
\newcommand{\blue}{\color{black}}
\newcommand{\rv}{\color{black}}

\begin{document}
	
	\title[The Reduced Basis Virtual Element Method for Eigenvalue Problems]{Avoiding stabilization terms\\in virtual elements for eigenvalue problems:\\The Reduced Basis Virtual Element Method}
	
	\author{Silvia Bertoluzza}
	\address{Istituto di Matematica Applicata e Tecnologie Informatiche \textquoteleft E. Magenes\textquoteright, Consiglio Nazionale delle Ricerche, via Ferrata 5, 27100, Pavia, Italy }
	\email{silvia.bertoluzza@imati.cnr.it}
	
	\author{Fabio Credali}
	\address{Computer, electrical and mathematical sciences and engineering division, King Abdullah University of Science and Technology, Thuwal 23955, Saudi Arabia}
	\email{fabio.credali@kaust.edu.sa}
	
	\author{Francesca Gardini}
	\address{Dipartimento di Matematica \textquoteleft F. Casorati\textquoteright, Universit\`a degli Studi di Pavia, via Ferrata 5, 27100, Pavia, Italy}
	\email{francesca.gardini@unipv.it}
	
	\maketitle
	
	\begin{abstract}
		We present the novel Reduced Basis Virtual Element Method (\textsf{rb}VEM) for solving the Laplace eigenvalue problem. This approach is based on the \rv lowest-order \black virtual element method and exploits the reduced basis technique to obtain an explicit representation of the virtual (non-polynomial) contribution to the discrete space. \textsf{rb}VEM yields a fully conforming discretization of the considered problem, so that stabilization terms are avoided. We prove that \textsf{rb}VEM provides the correct spectral approximation with optimal error estimates. Theoretical results are supplemented by an exhaustive numerical investigation.
		
		\
		
		\noindent\textbf{Keywords:} virtual element method,  eigenvalue problems, polygonal meshes, error analysis\\
		\textbf{MSC:} 65N15, 65N25, 65N30
	\end{abstract}

	\section*{Introduction}
	
	Virtual element methods (VEM) are a popular family of numerical schemes originated from mimetic finite difference~\cite{da2014mimetic} and allowing for a variational formulation in the spirit of the finite element method, see~\cite{beirao2013basic,vem-book,acta-vem} and references therein. Using VEM, the computational domain can be partitioned by general polytopal meshes, with very minimal shape regularity assumptions. At the element level, the discrete space is defined by considering polynomials up to degree $k\ge1$ plus additional \textit{virtual} functions, which are not explicitly known since they are solution of a local PDE. \rv Thus, polynomial projections are \black employed for computing the quantities of interest and ensure the optimal accuracy. On the other hand, the non-polynomial part is used to enforce \rv the conformity of the method. \black The stability of \rv the discrete bilinear form is ensured \black by introducing a suitable stabilization term~\cite{stab-survey}.
	
	In general, the stabilization term is ``artificial'' since it does not take into account the physical properties of the problem under consideration. Indeed, it is only required to scale as the virtual (non-polynomial) contribution to the stiffness matrix. It is well-known that the need for the stabilization term may represent a drawback for the method. For instance, defining such term may be non-trivial when dealing with complex nonlinear equations~\cite{wriggers2017efficient}, and it may produce inaccurate results when solving anisotropic problems~\cite{berrone2022comparison}.
	
	In order to overcome the difficulties we mentioned above, several recipes have been proposed in literature. The so called \textit{stabilization free} virtual elements~\cite{stabfree1,stabfree2} enlarge the underlying polynomial space to obtain a discrete formulation which is automatically stable. \rv A similar idea is used in~\cite{extrinsic}, where the polynomial projection is constructed onto a Raviart--Thomas space based on sub-triangulations. \black On the other hand, the \textit{self-stabilized} approach~\cite{self1,self2,self3} introduces some additional internal degrees of freedom which can be easily condensed. Another option is given by the efficient computation of the virtual functions, which can be pursued by support methods, such as the reduced basis approach~\cite{credali}, neural networks~\cite{neural-vem}, or rational functions~\cite{zerb1,zerb2}. \rv We also mention the work in~\cite{bem-fem}, where boundary elements are used to determine the basis functions of a polygonal finite element method. \black
	
	When dealing with eigenvalue problems, the issue of choosing a suitable stabilization term is even more critical since it concerns not only  the stiffness matrix, but also the mass term at the right hand side. Indeed, as observed in literature~\cite{BGG-calcolo,BGG-book}, an inappropriate choice of such stabilization terms may introduce additional artificial eigenmodes. In this context, \textit{stabilization free} and \textit{self-stabilized} methods are even more appealing. One option consists in keeping the stabilization term only at the left hand side~\cite{alzaben,BGG25}. Otherwise, fully stabilization-free approaches may be employed in the spirit of~\cite{self1}, as discussed in~\cite{Mengetal,mora-marcon}.
	
	In this work, we propose an alternative approach to those mentioned above. We resort to the reduced basis method to efficiently \rv approximate \black the local PDE defining the virtual basis functions as described in~\cite{credali} for the lowest order VEM. In this way, we obtain an explicit representation of the non-polynomial contribution to the discrete bilinear forms, so that stability is automatically ensured. This new approach results in a fully conforming polygonal method in the spirit of~\cite{manzini2014new}, where the VEM machinery is used for dealing with the polynomial part, while the reduced basis method takes care of the non-polynomial contribution. For this reason, we decided to name our approach as \textit{Reduced Basis Virtual Element Method} (\textsf{rb}VEM). We thus derive the variational formulation of the Laplace eigenvalue problem within the \textsf{rb}VEM framework and we carry out the error analysis by proving the correct spectral approximation and optimal convergence estimates. To this aim, a priori error estimates for the source problem are also proved in $H^1-$ and $L^2-$ norms. The robustness of our method and its approximation properties are confirmed by a wide range of numerical examples.
	
	The paper is organized as follows. We first recall the continuous formulation of the Laplace eigenproblem in Section~\ref{sec:pb_setting} and its standard lowest order virtual element discretization in Section~\ref{sec:vem1}. Section~\ref{sec:rbvem} discusses the construction of the new \textsf{rb}VEM space and introduces the new discrete formulation. The convergence analysis for both the source and the eigenvalue problems is carried out in Sections~\ref{sec:spec_theory}, \ref{sec:error} and~\ref{sec:error_eig}. Numerical tests confirming our theoretical findings are collected in Section~\ref{sec:tests}. Finally, we draw some conclusions in Section~\ref{sec:conclusions}.
	
	\section*{Notation}
	
	Throughout the paper we will use standard functional analysis notation. Given an open bounded domain $D\subset\R^2$, we denote by $L^2(D)$ the space of square integrable functions endowed with the scalar product $(\cdot,\cdot)_D$ inducing the norm $\|\cdot\|_{0,D}$. The symbol $\Hr{s}{D}$ stands for the family of (Hilbert) Sobolev spaces with differentiability index $s\in\R$, norm $\|\cdot\|_{s,D}$ and semi-norm $|\cdot|_{s,D}$. Moreover $H^1_0(D)$ is the subspace of $H^1(D)$ of functions with zero trace along $\partial D$. \blue Similarly, $\Hr{s}{\partial D}$ denotes the boundary Sobolev space endowed with norm $\|\cdot\|_{s,\partial D}$ and semi-norm $|\cdot|_{s,\partial D}$. \black The space of continuous functions in $D$ is denoted by $C^0(D)$ \blue with the maximum norm $\|\cdot\|_{\infty,D}$\black, whereas $\mathbb{P}_k(D)$ stands for the space of polynomials up to degree $k$ in $D$. Finally, $\mathcal{L}(V,W)$ is the space of linear and continuous operators from $V$ to $W$, with the simplified notation $\mathcal{L}(V)$ for operators acting from $V$ to itself.
	
	\section{Problem setting}\label{sec:pb_setting}
	
	In this paper, we consider the two-dimensional Laplace eigenvalue problem with homogeneous Dirichlet boundary conditions. Let $\Omega\subset\R^2$ be a polygonal domain, we seek for eigenvalues $\eig\in\R$ and eigenfunctions $\eigf\neq0$ satisfying
	\begin{equation}
		\left\{
		\begin{aligned}
			&-\Delta\eigf  = \eig\eigf&&\text{in }\Omega,\\
			&\eigf=0&&\text{on }\bOmega.
		\end{aligned}
		\right.
	\end{equation}
	
	By testing the equation with $v\in\Hunozero$, integration by parts yields the following variational formulation.
	\begin{problem}\label{pb:weak}
		Find $(\eig,\eigf)\in\R\times\Hunozero$, $\eigf\neq0$, such that
		\[
		a(\eigf,v) = \eig b(\eigf,v)\qquad\forall v\in\Hunozero,
		\]
		where
		\[
		a(\eigf,v)=(\grad\eigf,\grad v)_\Omega,\qquad b(\eigf,v)=(\eigf, v)_\Omega.
		\]
	\end{problem}
	
	The bilinear form $a(\cdot,\cdot)$ is continuous and coercive. The above problem admits an infinite sequence of strictly positive eigenvalues
	\[
	0<\eig_1\le\eig_2\le\cdots\le\eig_i\le\cdots,
	\]
	which may be repeated accordingly to their multiplicity. It is well-known that such sequence is divergent. Moreover, each $\eig_i$ is associated with an eigenfunction $\eigf_i$ satisfying the following properties
	\begin{equation}\label{eq:properties}
		\begin{aligned}
			&a(\eigf_i,\eigf_j) = b(\eigf_i,\eigf_j)= 0\quad\text{if }i\neq j,\\
			&b(\eigf_i,\eigf_i) = 1,\quad a(\eigf_i,\eigf_i)=\eig_i.
		\end{aligned}
	\end{equation}
	
	\section{The lowest order virtual element method for eigenvalue problems}\label{sec:vem1}
	
	\subsection{The numerical method}
	
	We discretize Problem~\ref{pb:weak} by the lowest order virtual element method. We thus decompose the domain $\Omega$ by a mesh $\mesh$ made up of polygons. We denote by $\element\in\mesh$ the generic mesh element, having diameter $h_\element$. As usual, the mesh size $h$ corresponds to the maximum of all $h_\element$. Moreover, by abuse of notation, we write $\edge\in\boK$ meaning that $\edge$ is an edge of  the element  $\element$. We assume standard mesh regularity properties for $\element$ (see e.g.~\cite{beirao2013basic}), requiring that each $\element$ is a star-shaped polygon with respect to a ball of radius $\rho_\element\ge C h_\element$ with~$C$~being a positive constant. Furthermore, we assume that the edge length is uniformly bounded from below by the diameter, i.e. that there exists a positive constant $\tilde{C}$, independent of $h$, such that for all~$\element\in\mesh$,  $|\edge|\ge\tilde{C} h_\element$ for all $\edge\in\partial\element$.
	
	Before defining the local discrete space, we introduce the elliptic projection $\PiNabla:\HunoK\rightarrow\PunoK$, defined as the unique solution of
	\begin{equation}\label{eq:pinabla}
		\big(\grad(\PiNabla v-v),\grad p\big)_\element=0\qquad\forall p\in\PunoK,\qquad\int_{\boK} \PiNabla v\,\ds = \int_{\boK} v\,\ds.
	\end{equation}
	
	We then consider the \textit{enhanced} virtual element space of order one on $\element$~\cite{ahmad2013equivalent}, that is
	\begin{equation}\label{eq:loc_vem}
		\begin{aligned}
			\vunoloc = \big\{ v\in\HunoK:\,
			v_{|\edge}\in\Punoe\;\forall\edge\in\boK,\,\Delta v\in\PunoK,\,
			(\PiNabla v-v,p)_\element=0\; \forall p\in\PunoK\big\}.
		\end{aligned}
	\end{equation}
	The global virtual element space is then easily obtained by glueing together all local spaces by continuity, indeed we have
	\begin{equation}
		\Vh = \{v\in\Hunozero:\,v_{|\element}\in\vunoloc\quad\forall\element\in\mesh\}.
	\end{equation}
	
	By construction, the inclusion $\PunoK\subset\vunoloc$ holds, so that optimal approximation properties are guaranteed. Furthermore, $v\in\vunoloc$ is uniquely determined by its value at the vertices of $\element$, which are a unisolvent set of degrees of freedom. We observe that $\PiNabla$ is well defined on $\HunoK$ and it is computable through the degrees of freedom when applied to $v\in\vunoloc$. Notice also that the definition of $\vunoloc$ allows also the computation of $L^2-$ projection ${\PiZero:\vunoloc\rightarrow\PunoK}$
	\begin{equation}
		(\PiZero v,p)_\element = (v,p)_\element\qquad\forall p\in\PunoK
	\end{equation}
	through the degrees of freedom. In particular, the last condition in~\eqref{eq:loc_vem} implies that $\PiZero=\PiNabla$. In general, this property is not anymore true if we increase the polynomial degree of the discrete space. In the remainder of the paper, we will keep the two notation distinct.
	
	We now discuss the construction of the discrete problem. We first write the bilinear forms~$a$ and~$b$ in terms of local contributions, so that
	\begin{equation}
		\begin{aligned}
			&a(u,v) = \sum_{\element\in\mesh} \aK(u,v),&&\qquad\aK(u,v)=(\grad u,\grad v)_\element,\\
			&b(u,v) = \sum_{\element\in\mesh} \bK(u,v),&&\qquad\bK(u,v)=(u,v)_\element.
		\end{aligned}
	\end{equation}
	
	The projectors $\PiNabla$ provide an orthogonal decomposition of $\vunoloc$ into a polynomial space plus the projection kernel. More precisely, we have
	\begin{equation}
		\vunoloc = \PunoK\oplus\Vperp,\qquad\text{where }\Vperp=ker(\PiNabla)\subset\vunoloc.
	\end{equation}
	Therefore, the splitting of $u_h,v_h\in\vunoloc$ into a polynomial part plus a non-polynomial part gives
	\begin{equation}\label{eq:splitting}
		\begin{aligned}
			&\aK(u_h,v_h)=\aK(\PiNabla u_h,\PiNabla v_h)+\aK( u_h-\PiNabla u_h,v_h-\PiNabla v_h)\\
			&\bK(u_h,v_h)=\bK( \PiZero u_h, \PiZero v_h)+\bK( u_h-\PiZero u_h,v_h-\PiZero v_h)
		\end{aligned}
	\end{equation}
	where only the first term at the right hand side of both equations is computable through the degrees of freedom. Such terms ensure the \emph{consistency} of the numerical method. On the other hand, stability is ensured by replacing the non-computable contributions with suitable stabilization terms. Let $\staba,\,\stabb$ be any two symmetric positive definite bilinear forms scaling as $\aK$ and $\bK$, respectively
	\begin{equation}
		\begin{aligned}
			&c_1^\star \aK(v_h,v_h) \le \staba(v_h,v_h) \le c_1^\sharp \aK(v_h,v_h)&&\forall v_h\in\Vperp,\\
			&c_2^\star\bK(v_h,v_h) \le \stabb(v_h,v_h) \le c_2^\sharp \bK(v_h,v_h)&&\forall v_h\in\Vperp
		\end{aligned}
	\end{equation}
	for some positive constants $c_i^\star,c_i^\sharp$ for $i=1,2$. We then define the local discrete bilinear forms as
	\begin{equation}\label{eq:discrete_bils}
		\begin{aligned}
			&\ahK(u_h,v_h)=\aK(\PiNabla u_h,\PiNabla v_h)+\staba( u_h-\PiNabla u_h,v_h-\PiNabla v_h),\\
			&\bhK(u_h,v_h)=\bK( \PiZero u_h, \PiZero v_h)+\stabb( u_h-\PiZero u_h,v_h-\PiZero v_h).
		\end{aligned}
	\end{equation}
	The global forms are obtained by summing all local contributions, i.e. $\ah(u,v) = \sum_{\element\in\mesh} \ahK(u,v)$ and $\bh(u,v) = \sum_{\element\in\mesh} \bhK(u,v)$.
	The discrete problem reads as follows.
	
	\begin{problem}\label{pb:vem}
		Find $(\eig_h,\eigf_h)\in\R\times\Vh$, $\eigf_h\neq0$, such that
		\[
		\ah(\eigf_h,v_h) = \eig_h \bh(\eigf_h,v_h)\qquad\forall v_h\in\Vh.
		\]
	\end{problem}
	
	Thanks to the stability property, Problem~\ref{pb:vem} admits $\neig=\dim\Vh$ positive eigenvalues
	\[
	0<\eig_{1,h}\le\dots\le\eig_{\neig,h}.
	\]
	We denote by $\eigf_{i,h}$ with $i=1,\dots,\neig$, the corresponding eigenfunctions, satisfying the discrete counterpart of the properties reported in~\eqref{eq:properties}. 
	\rv Error estimates for \black the standard virtual element method \rv for eigenvalue problems \black has been presented in~\cite{gardini2018}.

	The stabilization term $\staba$ in the definition of the discrete bilinear form $a_h$ is a key ingredient when designing virtual element discretization. Indeed, it guarantees stability and well-posedness of the discrete source problem~\cite{beirao2013basic,ahmad2013equivalent}.
	
	When dealing with eigenvalue problems, the mass term $b(\cdot,\cdot)$ appears at the right hand side, posing the issue of its discretization in virtual element framework. A straightforward approach, as discussed above, approximates $b$ by $b_h$ following the same idea as for $a_h$, namely introducing a second stabilization term $\stabb$ (see e.g.~\cite{steklov,acoustic-2017,mora2018,gardini2018,gardini2019}).
	
	Nevertheless, another approach has been proposed in literature when analyzing the use of virtual elements for the approximation of eigenvalue problems~\cite{Mengetal,alzaben,mora-marcon,BGG25}. Such approach consists in dropping the term $\stabb$ from the definition of the discrete mass term~$b_h$. 
	
	In~\cite{BGG-calcolo}, the authors investigated the effect of the stabilization term $\stabb$; in particular, they showed that the choice of the stabilization parameter is critical since it may introduce spurious eigenvalues polluting the discrete spectrum. For the sake of completeness, we briefly recall such results in the next section.
	
	We point out that several options are available for defining the stabilization terms $\staba$ and $\stabb$, also depending on the problem under consideration, see e.g.~\cite{cangiani2015hourglass,mascotto2018ill,bertoluzza2022stabilization}. 
	{\red Previous work on the use of virtual elements for eigenvalue approximation follows}~\cite{BGG-calcolo} and make the simplest choice by considering the \textit{dofi--dofi} stabilization: letting $\vertex_1,\dots,\vertex_\Nvert$ be the vertices of the generic element $\element\in\mesh$, this is defined as
	\begin{equation}\label{eq:stab-forms}
		\staba(u_h,v_h)=\para_\element\sum_{i=1}^{\Nvert}u_h(\vertex_i)v_h(\vertex_i),
		\qquad
		\stabb(u_h,v_h)=\parb_\element h_\element^2\sum_{i=1}^{\Nvert}u_h(\vertex_i)v_h(\vertex_i),
	\end{equation}
	where $\para_\element$ and $\parb_\element$ are two positive stability parameters, which may depend on $\element$, but not on $h$. The value of $\para_\element$ and $\parb_\element$ is usually chosen in dependence of the eigenvalues of the local matrices arising from the discretization of $\aK(\PiNabla u_h,\PiNabla v_h)$ and $(1/h_\element^2)\bK(\PiZero u_h,\PiZero v_h)$, respectively. \rv A reasonable choice is given, for instance, by the mean of the eigenvalues of the consistency term~\cite{gardini2018}. \black
	
	The aim of the present work is {\red instead} to avoid any possible issue regarding the presence of stabilization terms for both discrete bilinear forms. In this direction, {\red we will introduce reduced basis approximation of the nonpolynomial contribution $\Vperp$ to the virtual element space $\vunoloc$,  in the spirit of~\cite{credali}, which will allow us to compute the non polynomial component of the splitting \eqref{eq:splitting}. This is equivalent to introduce a new discrete space somehow equivalent to the virtual element space \eqref{eq:loc_vem}, which we will leverage for the analysis of the method.}
	
	\subsection{Parameter-dependent generalized eigenvalue problems in matrix form}
	
	In this section, \rv for the sake of completeness, \black we recall the main features of the numerical investigation presented in~\cite{BGG-calcolo}, dealing with the effect of stabilization parameters on the approximation of eigenvalue problems.
	
	To fix ideas, we now assume that $\para_\element=\alpha$ and $\parb_\element=\beta$ in~\eqref{eq:stab-forms} for all polygons $\element\in\mesh$.	This assumption is reasonable if the parameters $\para_\element,\,\parb_\element$ vary in a small range.  
	
	Due to the presence of  the stabilization parameters in~\eqref{eq:stab-forms}, and thus in the discrete forms 
	$\ah$ and $\bh$, it turns out that the algebraic  generalized eigenvalue problem
	\begin{equation}\label{eq:matrix-form}
		\stiff(\alpha)\,\m{u}=\m{\lambda}\, \mass(\beta)\,\m{u},
	\end{equation}
	arising from Problem~\ref{pb:vem}, is parameter dependent. We point out that $\stiff(\alpha)=\stiff_1+\alpha\stiff_2\in\R^{\neig\times\neig}$ represents the stiffness matrix associated with $\ah$, whereas $\mass(\beta)=\mass_1+\beta\mass_2\in\R^{\neig\times\neig}$ is the mass matrix associated with $\bh$.
	
	Recently, it has been observed that the stabilizing parameters may introduce artificial eigenmodes which pollute the spectrum~\cite{BGG-calcolo,BGG-book}. In this context, the theoretical results state that, for a given choice of the stabilization parameters, the discrete solution converges to the continuous one with optimal order in $h$ or/and $p$ (see~\cite{gardini2018,certik}). It is implicitly understood that the convergence depends on the choice of the parameters. 
	
	In~\cite{BGG-calcolo}, under suitable assumptions on $\stiff_1,\stiff_2,\mass_1,\mass_2$, the authors discuss how the computed spectrum varies in terms of $\alpha$ and $\beta$. Moreover, several numerical examples show that the parameters have to be carefully tuned and that wrong choices produce useless results. 
	
	More precisely, if only one parameter is varying, the eigenvalues split into two families: a set of eigenvalues depends on such parameter, while all the others are independent. In particular, the dependence on $\alpha$ is linear and thus the corresponding eigenvalues lie on a straight line, while those depending on $\beta$ lie on hyperbolas. This behavior is depicted in Figure~\ref{fig:param_calcolo}, where a simple example is considered. We introduce the matrices $\Cmatr_1=\mathsf{diag}([3,4,5,6,0,0])$ and $\Cmatr_2=\mathsf{diag}([0,0,0,0,1,2])$. On the left panel, we set $\stiff(\alpha)=\Cmatr_1+\alpha\Cmatr_2$ and $\mass=\eye(6)$, so that the eigevalues are given by $\alpha,2\alpha,3,4,5,6$. On the right panel, we set $\stiff=\eye(6)$ and $\mass(\beta)=\Cmatr_1+\beta\Cmatr_2$, so that the eigenvalues are $1/\beta,1/(2\beta),1/3,1/4,1/5,1/6$. It is clear that, in the first case, the eigenvalues depending on $\alpha$ are aligned, whereas in the second case, eigenvalues depending on $\beta$ lie on hyperbolas.
	
	\begin{figure}
		\includegraphics[width=0.45\linewidth]{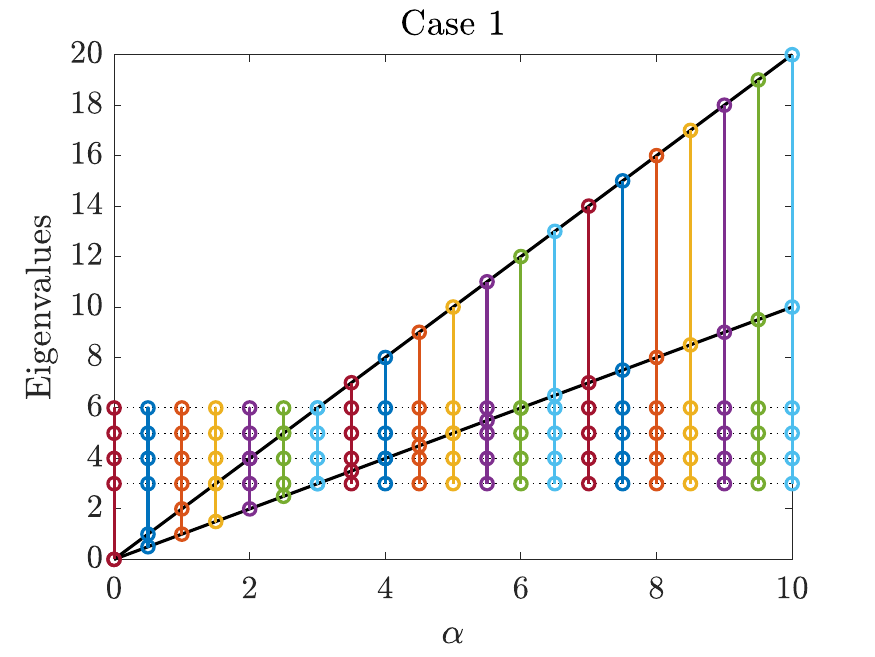}\quad
		\includegraphics[width=0.45\linewidth]{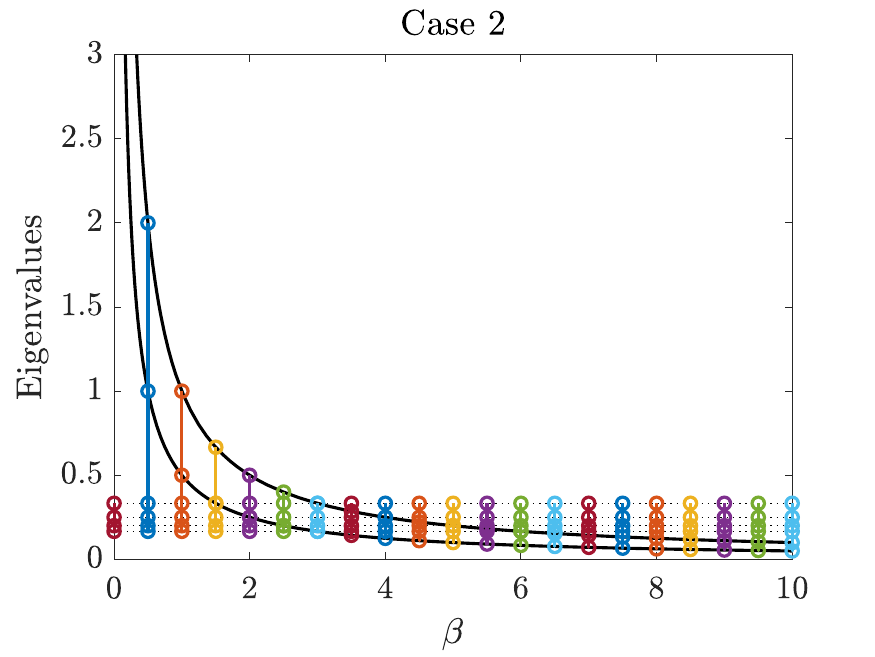}
		\caption{Parameters dependence of the eigenvalues.}
		\label{fig:param_calcolo}
	\end{figure}
	
	Even if the matrices arising from the VEM discretization do not completely satisfy the requirements in~\cite[Assump. 1]{BGG-calcolo}, numerical tests showed a similar parameters dependence. Choosing a sufficiently large stabilization parameter $\alpha$ and a small (possibly zero) $\beta$ could be the safest choice. Nevertheless, when $\beta=0$, the discrete form $b_h(\cdot,\cdot)$ equals the consistency term $b(\PiZero\cdot,\PiZero\cdot)$, and thus it generally has a nontrivial kernel. In this case, the partially stabilized VEM provides numerical optimal results even if the mass term does not enjoy the stability property. 
	We refer to~\cite{BGG25} for the analysis of first and second order VEM approximations of eigenvalue problems when the stabilization parameter $\beta$ is zero. 
	
	Taking into account the above discussion, in order to avoid any dependence of the discrete spectrum on parameters, in this paper we resort to the \textsf{rb}VEM approach introduced in~\cite{credali} for the source problem.
	
	\section{The \textsf{rb}VEM discrete space}\label{sec:rbvem}
	
	In order to overcome the issues relative to the stabilization forms, we follow the approach introduced in~\cite{credali}, where stability of the discrete source problem is enforced by actually approximating the virtual contribution. We extended such approach to the mass term to deal with eigenvalue problems: this is equivalent to discretize Problem~\ref{pb:weak} in a different (conforming) space where bilinear forms are fully computable.
	
	The new discrete space is built upon the virtual element one by means of the Reduced Basis Method, which has been designed for the efficient solution of parametric equations for a large set of parameters. Indeed, we can interpret the family of local problems defining the virtual basis functions on different elements as a geometric parametric problem defined on a \rv suitable \black reference \rv polygon. \black 
	
	In this section we introduce the parametric problem and postpone to the Appendix the issue of its efficient solution. We then define the new polygonal discrete space.
	
	Let us consider a generic star-shaped polygon $\element$ with $N$ vertices $\vv_1,\dots,\vv_N$ ordered counterclockwise. All functions $v\in\vunoloc$ can be written in terms of the nodal basis functions $\base_1,\dots,\base_N$, which are solution of the following local PDE.
	
	\begin{problem}\label{pb:basis}
		For $i=1,\dots,N$, find $\base_i$ such that
		\begin{equation}
			\begin{aligned}
				-\Delta \base_i &= 0&&\text{in }\element\\
				\base_i &= g_i&&\text{on }\boK,
			\end{aligned}
		\end{equation}
		with $g_i$ such that $g_{i|\edge}\in\Punoe$ for all $\edge\in\boK$, and $g_i(\vv_j)=\delta_{ij}$ for $j=1,\dots,N$.
	\end{problem}
	
	Problem~\ref{pb:basis} can be formulated as a parametric problem on the family of star-shaped polygons with $N$ vertices. To this aim, for fixed $N$, we rewrite Problem~\ref{pb:basis} on a \rv suitable reference polygon \black (see, for instance,~\cite[Chap. 6.2]{hesthaven2016certified}). Given a polygon $\element$, let $\Ker(\element)$ denote the set of point for which $\element$ is star-shaped and let $\xK$ and $\radiusK$ being the centroid and the radius of the circumscribed circle, respectively. We then introduce the parameter space	
	\begin{equation}\label{defpolyparameters}
		\parameters = \{\element: \ \element\text{ polygon with $N$ vertices with }\Ker(\element) \not=\emptyset,\  \radiusK = 1, \ \xK  =  \hx = (0,0)^{\red \top}\}.
	\end{equation} 
	
	In other words, $\parameters$ contains all star-shaped polygons with $N$ vertices, centered in the origin, and having circumscribed circle of unit radius.
	
	\begin{figure}
		\begin{overpic}[trim = 100 200 100 200,width=15cm]{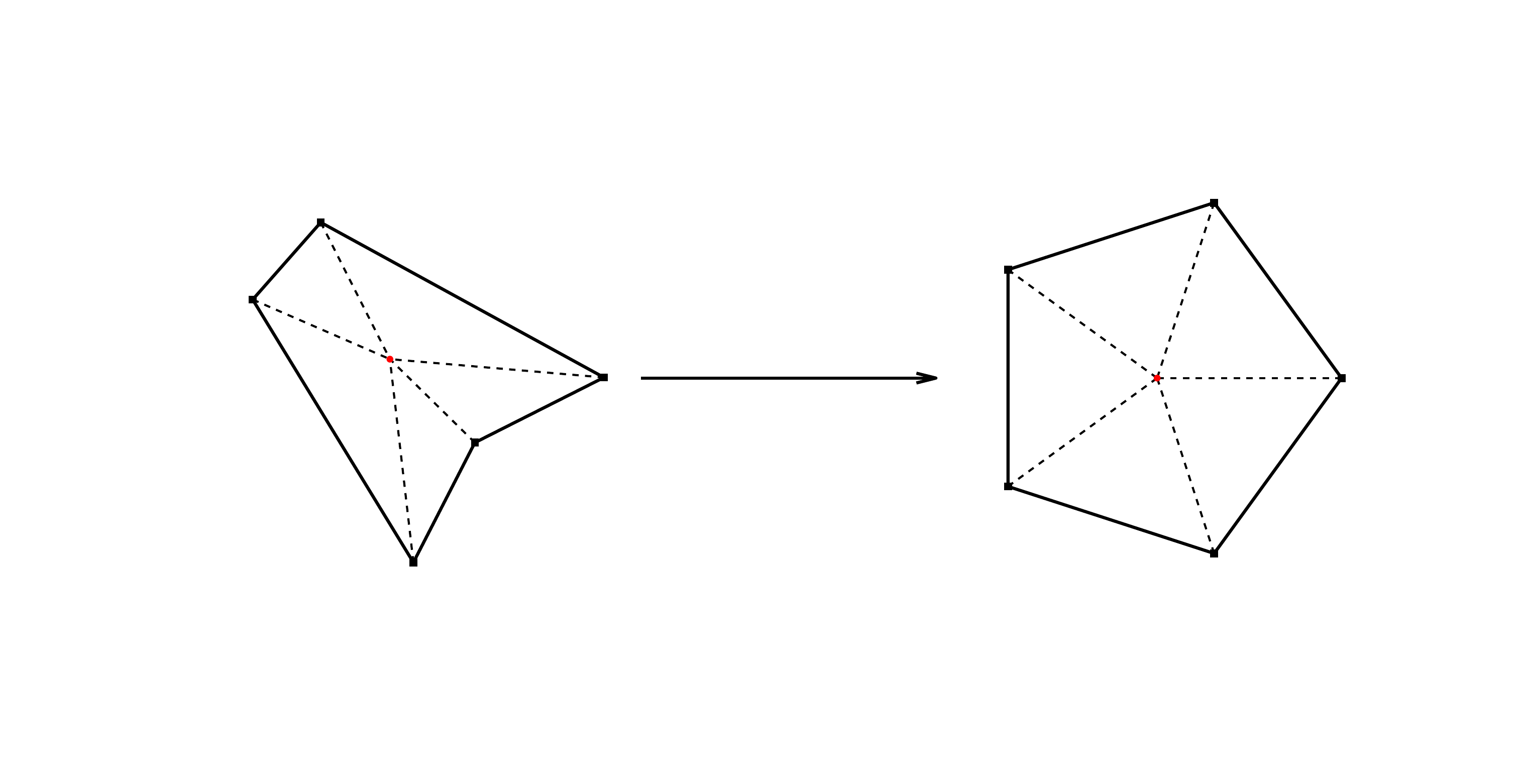}
			\put (71,14) {$\That_i$}
			\put (19,12) {$\Tri_i$}
			\put (22,17.5) {$\xK$}
			\put (82,16) {$\hx$}
			\put (50,17) {$\map$}
			\put (9,25) {$\element$}
			\put (91,25) {$\refelement$}
		\end{overpic}
		\caption{An example of \rv piecewise affine mapping \black $\map$  between a {star-shaped} pentagon and its reference configuration.}
		\label{fig:affine_mapping}
	\end{figure}
	
	As reference element, we introduce the regular polygon $\refelement$ with vertices $\hv_1,\dots,\hv_N$ (ordered counterclockwise), centered in the origin $\hx=(0,0)^{\red \top}$. We then construct a piecewise affine map $\map$ from the generic $\element$ to $\refelement$, as depicted in Figure~\ref{fig:affine_mapping}. We decompose both $\element$ and $\refelement$ into $N$ triangles by connecting their vertices with $\xK$ and $\hx$, respectively, i.e.
	\begin{equation}\label{eq:partition}
		\element = \bigcup_{i=1}^N \Tri_i \quad \text{and} \quad \refelement = \bigcup_{i=1}^N \That_i.
	\end{equation}
	\rv Thanks to our mesh regularity assumptions, the shape-regularity of $\element$ implies the shape-regularity of $\Tri_1,\dots,\Tri_N$. \black We define $\map:\element\rightarrow\refelement$ as
	\begin{equation}
		\map(\x) = \Bmatr_{\element,i}\x \quad \text{on } T_i
	\end{equation}
	such that
	\begin{equation}
		\map(\vv_i)=\hv_i,\quad i=1,\dots,N\quad\text{and}\quad\map(\rv\xK\black)=\hx.
	\end{equation}
	We point out that $\Bmatr_{\element,i}$ is a $2\times2$ matrix defined on $\Tri_i$.
	
	\begin{rem}
		\rv As already commented in~\cite{credali}, the method can also be extended to the three dimensional case. However, this requires the construction of a suitable set of reference polyhedra and piecewise linear maps. Due to the complex shape of general polyhedra, this operation is far from being trivial and one may consider to adopt domain decomposition strategies. Up to now, the three dimensional setting has not been studied and its efficiency is not guaranteed yet.\black
	\end{rem}
	
	In order to formulate the parametric problem, we first write Problem~\ref{pb:basis} in variational form and we apply a change of variables, so that we have
	\begin{equation}
		(\grad w,\grad v)_\element = \sum_{i=1}^{N} (\grad w,\grad v)_{\Tri_i}= \sum_{i=1}^{N} 
		\left(|\det(\Bmatr^{-1}_{\element,i})|\, \Bmatr_{\element,i}^\top \Bmatr_{\element,i}\,\grad\widehat{w},\grad\widehat{v}\right)_{\That_i},
	\end{equation}
	where $w$ and $v$ are two generic functions on $\element$.
	Finally, we introduce the parameter-dependent bilinear form $\aRB$ defined as
	\begin{equation}\label{eq:def_aRB}
		\aRB(\widehat w,\widehat v;\element) = \sum_{i=1}^{N} 
		\left(|\det(\Bmatr^{-1}_{\element,i})|\, \Bmatr_{\element,i}^\top \Bmatr_{\element,i}\,\grad\widehat{w},\grad\widehat{v}\right)_{\That_i},
	\end{equation}
	hence we obtain the following version of Problem~\ref{pb:basis}, defined on the reference polygon.
	
	\begin{problem}\label{pro:parametric}
		Find $\hbase_i^{\rv \element\black} \in H^1(\refelement)$ such that
		\begin{equation*}\label{eq:postprocessinghat}	
			\hbase_i^{\rv \element\black} = \widehat g_i \quad\text{ on }\partial \widehat {\rv \element\black}, \qquad \text{ and }\qquad	\aRB(\hbase_i^{\rv \element\black},\widehat v;\element)		 = 0, \quad  \text{ for all } \widehat v \in H^1_0(\refelement), 
		\end{equation*}
		where  $\widehat{g}_i$ is the piecewise linear function on $\partial \refelement$ such that $\widehat g_i(\widehat \vv_k)= \delta_{i,k}$ for $i,k=1,\dots,N$.
	\end{problem}
	
	\rv We introduce the piecewise linear finite element space $\Vdelta$ defined on a fine (fixed) triangulation $\meshdelta$ of $\refelement$.
	{ To handle the non-homogeneous boundary conditions, as customary in the finite element method, we reformulate Problem~\ref{pro:parametric} in the homogeneous setting by introducing functions ``carrying'' the boundary values, which, we recall, are independent of~$E$.}
	To this aim, \black we let $\hTheta_i\in\Vdelta$ be the discrete harmonic lifting of $\widehat g_i$ defined as solution of
	\begin{equation}
		\left(\grad\hTheta_i,\grad v_\delta\right)_{\refelement} = 0\qquad\forall v_\delta\in\Vdelta\cap\Huzh,\qquad\text{with}\quad \hTheta_i=\widehat{g}_i\quad\text{on }\partial\refelement,
	\end{equation}
	so that we can write $\hbase_i^{\rv\element\black}$ as the sum of two contributions
	\begin{equation}
		\hbase_i^{\rv\element\black} = \hTheta_i+\resKi,
	\end{equation}
	where $\resKi$ is solution of the following parametric problem \rv with homogeneous boundary conditions.\black
	
	\begin{problem}\label{pro:refK}
		For $i=1,\dots,N$, find $\resKi\in\Huzh$ such that
		$$
		\aRB(\resKi,\vhat;\element) = -\aRB(\hTheta_i,\vhat;\element)\qquad\forall\vhat\in\Huzh.
		$$
	\end{problem}
	
	\rv In order to solve the parametric Problem~\ref{pro:refK}, we resort to the Reduced Basis Method, see~\cite{hesthaven2016certified} for a survey on its foundations. Such method consists in an offline procedure, { performed once and for all}, where the computational load is concentrated, followed by an efficient online phase, where {Problem \ref{pro:refK} is solved cheaply by} making use of pre-computed objects. \black
	The reduced basis approach we designed for efficiently solve Problem~\ref{pro:refK} has been presented in~\cite{credali} and it is briefly recalled in the Appendix. In this context, we seek for an approximation \rv$\resKirb$ \black of $\resKi$ in a {\red suitable} reduced {\red sub}space {\red of $\Vdelta$} of dimension $M\ll\dim(\Vdelta)$, so that we find
	\begin{equation}\label{eq:rb_vem_basis_d}
		\baserbhat{i} = \hTheta_i + \rv\resKirb\black,\qquad i=1,\dots,N,
	\end{equation}
	and, by pull back, we obtain 
	\begin{equation}\label{eq:rb_vem_basis_2}
		\baserb{i} = \baserbhat{i}\circ\rv\map\black,\qquad i=1,\dots,N,
	\end{equation}
	which are the reduced basis approximation of the VEM basis functions.
	
	Given $\element\in\mesh$, our approach is based on the idea that we can compute exactly the bilinear forms $\aK$ and $\bK$ if we consider the functions $\base_{M,i}^\mathrm{rb}[\element]$ constructed in~\eqref{eq:rb_vem_basis_2}. Then we define a new suitable discrete space $\Vhrb$, where all quantities are computable. We first introduce the boundary space
	
	\begin{equation}
		\mathbb{B}^\perp(\boK) = \left\{v\in H^{1/2}(\boK):\,\int_{\boK}v\grad q\cdot\mathbf{n}^\element\,\ds=0\ \forall q\in\PunoK,\,\int_{\boK} v\,\ds=0\right\}
	\end{equation} 
	\rv so that the elliptic projection $\PiNabla$ defined in~\eqref {eq:pinabla} satisfies \black $\PiNabla v=0$ if and only if $v{\red |_{\boK}}\in\mathbb{B}^\perp(\boK)$. For fixed $M$, we then define the RB space 
	\begin{equation}\label{eq:wrb}
		\Whrb(\element)=\{v\in span\{\baserb{1},\dots,\baserb{N}\}:\,v_{\rv |\boK\black}\in\mathbb{B}^\perp(\boK)\}
	\end{equation}
	and, finally, the new discrete \textsf{rb}VEM space
	\begin{equation}\label{eq:rbspace}
		\Vhrb(\element) = \PunoK\oplus\Whrb(\element).
	\end{equation}
	We observe that, by construction, $\PunoK\subset\Vhrb (\element)$. Moreover, the polynomial contribution to $\Vhrb(\element)$ is the result of the action of the operator $\PiNabla$ on the virtual element space $\vunoloc$. \blue In other words, our new space $\Vhrb(\element)$ is spanned by the basis functions $\baseVrb{1},\dots,\baseVrb{N}$ defined as
	$$
	\baseVrb{i} = \PiNabla e_i + \sum_{j=1}^N \big(e_i(\vv_j)-\PiNabla e_i(\vv_j)\big)\,\baserb{j},\qquad i=1,\dots,N,
	$$
	where $e_i$ are the virtual basis functions of $\vunoloc$\black. \rv Notice that the approximation properties of the \textsf{rb}VEM space are guaranteed by the inclusion of polynomials and do not depend on how well $\baserb{j}$ approximate the true VEM basis functions. Indeed, $\baserb{j}$ are only employed to recover the non-computable contribution of the original VEM space, {needed to ensure the conformity of the discretization space}.\black
	
	Let $\uhrb,\vRB\in\Vhrb(\element)$, then the following representation holds
	\begin{equation}
		\uhrb = u^P + u^W,\qquad\vRB = v^P + v^W,\qquad\text{with }u^P,v^P\in\PunoK,\, u^W,v^W\in\Whrb(\element)
	\end{equation}
	so that
	\begin{equation}\label{eq:rbstiff}
		a(\uhrb,\vRB) = a(u^P,v^P) + a(u^W,v^W) + a(u^P,v^W) + a(u^W,v^P)
	\end{equation}
	and 
	\begin{equation}\label{eq:rbmass}
		b(\uhrb,\vRB) = b(u^P,v^P) + b(u^W,v^W) + b(u^P,v^W) + b(u^W,v^P).
	\end{equation}
	We observe that due to the orthogonal decomposition of the local \textsf{rb}VEM space $\Vhrb(\element)$ with respect to $a(\cdot,\cdot)$, the cross terms $a(u^P,v^W)$ and $a(u^W,v^P)$ vanish. On the other hand, we emphasize that this is not true for $b(u^P,v^W)$ and $b(u^W,v^P)$, which must be considered when assembling the mass matrix. The computability of such terms is not an issue since \textsf{rb}VEM provides all the required information. Algorithm~\ref{alg:method} in the Appendix describes how to efficiently construct the matrices arising from Problem~\ref{pb:rb_eig} by exploiting the properties of VEM and of the Reduced Basis method. We are going to show that the new space $\Vhrb(\element)$ satisfies optimal approximation properties and that the solution of Problem~\ref{pb:rb_eig} is not polluted by spurious eigenvalues.
	
	By glueing all local spaces by continuity, we find the global \textsf{rb}VEM space
	\begin{equation}
		\Vhrb = \{v\in\Hunozero:\,v_{|\element}\in\Vhrb(\element)\,\text{for all }\element\in\mesh\}.
	\end{equation}
	Discretizing Problem~\ref{pb:weak} in $\Vhrb$ by a standard (fully conforming) Galerkin method we obtain the following problem.
	\begin{problem}\label{pb:rb_eig}
		Find $(\eigRB,\eigfRB)\in\R\times\Vhrb$, $\eigfRB\neq0$, such that
		\[
		a(\eigfRB,\vRB) = \eigRB b(\eigfRB,\vRB)\qquad\forall \vRB\in\Vhrb.
		\]
	\end{problem}
	Since basis functions are explicitly known, all quantities are computable and stabilization parameters disappear from the discrete problem.
	{\red We highlight that, by construction, the new discrete space has the same set of degrees of freedom as the original virtual element space. We can then interpret Problem \ref{pb:rb_eig} as a different non-conforming formulation of our eigenvalue problem in the virtual element space $\Vh$, of the form
		\begin{altproblem}\label{pb:equiv_vem_eig}	Find $(\eigvem,\eigfvem)\in\R\times\Vh$, $\eigfvem\neq0$, such that
			\[
			a_h(\eigfvem,v_h) = \eigvem b_h(\eigfvem,v_h)\qquad\forall v_h\in\Vh,
			\]
			where $a_h: \Vh \times \Vh \to \mathbb{R}$ and $b_h:\Vh \times \Vh \to  \mathbb{R}$ are defined as
			\begin{gather}
				a_h(u_h,v_h) = \sum_\element a_h^\element(u_h,v_h), \qquad a_h^\element(\base_i,\base_j) = a(\blue\baseVrb{i},\baseVrb{j}\black),\\
				b_h(u_h,v_h) = \sum_\element b_h^\element(u_h,v_h), \qquad b_h^\element(\base_i,\base_j) = b(\blue\baseVrb{i},\baseVrb{j}\black).
			\end{gather}
		\end{altproblem}	
	}
	
	\begin{rem}\label{rem:rbstab}
		The ``virtual element reader'' would have probably discretized the eigenvalue problem in a different way by considering the (local) bilinear forms
		\begin{equation}\label{eq:rb-stab}
			\begin{aligned}
				&\ahK(u_h,v_h) = a(\PiNabla u_h,\PiNabla v_h) + S_a^{\element,\mathrm{rb}}(u_h^W,v_h^W)
				\qquad
				\text{with }\quad S_a^{\element,\mathrm{rb}}(u_h^W,v_h^W) = \aK(u_h^W,v_h^W),
				\\
				&\bhK(u_h,v_h) = b(\PiZero u_h,\PiZero v_h) + \chi\,S_b^{\element,\mathrm{rb}}(u_h^W,v_h^W)
				\qquad
				\text{with }\quad S_b^{\element,\mathrm{rb}}(u_h^W,v_h^W) = \bK(u_h^W,v_h^W)
			\end{aligned}
		\end{equation} 
		with $u_h,v_h\in\vunoloc$, $u_h^W,v_h^W\in\Whrb(\element)$ and $\chi=0,1$.
		
		More precisely, $\ahK(\cdot,\cdot)$ is defined by choosing the RB-stabilization introduced in~\cite{credali} and the stability parameter is dropped out since we are replacing the non-computable term by its reduced basis approximation. Notice that $\ahK(\cdot,\cdot)$ is exactly the same bilinear form as in~\eqref{eq:rbstiff}, but defined in the original virtual element space $\vunoloc$. The same argument applies to the mass term: indeed, if $\chi=1$, we stabilize $\bhK(\cdot,\cdot)$ by a reduced basis approximation of the non-computable part of $\vunoloc$. Unlike the bilinear form in~\eqref{eq:rbmass}, this definition does not contain cross terms. The choice $\chi=0$ yields to a partially stabilized formulation that is also admissible.
		
		The \emph{rb--stab--VEM} formulations in~\eqref{eq:rb-stab} fall in the framework of standard VEM, where stabilization terms are only required to scale as the actual non-computable contributions. Optimal convergence properties are thus ensured by the virtual elements theory.
	\end{rem}
	
	\section{Spectral theory for compact operators}\label{sec:spec_theory}

	We first introduce the continuous source problem associated with Problem~\ref{pb:weak}, which reads as follows.
	\begin{problem}\label{pb:source_cont}
		Given $f\in\Ldue$, find $\u\in\Hunozero$ such that
		\[
		a(\u,v) = (f,v)_\Omega\qquad\forall v\in\Hunozero.
		\]
	\end{problem}
	It is well-known that the above problem is well-posed and admits a unique solution~\cite{lions2012non}. Moreover, there exists $1/2<s\le1$ such that $\u\in\Hr{1+s}{\Omega}$ and the following stability estimate holds
	\begin{equation}\label{eq:stability}
		\|\u\|_{1+s,\Omega} \le C\,\|f\|_{0,\Omega}.
	\end{equation}
	In order to carry out the spectral convergence analysis of our method, we introduce the solution operator $\so:\Hr{1}{\Omega}\rightarrow\Hr{1+s}{\Omega}\subset\Hr{1}{\Omega}$ defined as $\so f = \u$, where $\u$ is the unique solution to Problem~\ref{pb:source_cont}, namely 
	\[
	a(\so f,v) = (f,v)_\Omega\qquad\forall v\in\Hunozero.
	\]
	It turns out that such operator is linear, continuous, and self-adjoint. We point out that $\so$ is also compact, thanks to the compact embedding of $\Hr{1+s}{\Omega}$ in $\Hr{1}{\Omega}$. Moreover, there is a one-to-one correspondence between eigenpairs of Problem~\ref{pb:weak} and those of $\so$. Indeed, if $(\eig,\eigf)$ is an eigenpair for the continuous Problem~\ref{pb:weak}, then $(1/\eig,\eigf)$ is an eigenpair for $\so$, and viceversa.
	
	Similarly, we now introduce the discrete source problem associated with Problem~\ref{pb:rb_eig}, with both bilinear form $a$ and right hand side discretized by the \textsf{rb}{VEM} approach.
	
	\begin{problem}\label{pb:source_rb}
		Given $f\in\Ldue$, find $\uhrb\in\Vhrb$ such that
		\[
		a(\uhrb,\vRB) = (f,\vRB)_\Omega\qquad\forall \vRB\in\Vhrb.
		\]
	\end{problem}
	
	{\red	By looking at \textsf{rb}VEM  as a standard Galerkin approximation in $\Vhrb$,  we can avoid dealing with the consistency error that arises from the non conformity that naturally arises in the virtual element framework.}
	
	We observe that continuity and coercivity of the bilinear form $a$, together with the linearity of the right hand side, ensure the well-posedness of Problem~\ref{pb:source_rb}, as stated by the Lax--Milgram theorem. The discrete solution operator ${\so_h:\Hr{1}{\Omega}\rightarrow\Vhrb\subset\Hr{1}{\Omega}}$ is thus defined as $\so_h f = \uhrb$, with $\uhrb$ being the unique solution to Problem~\ref{pb:source_rb}. In other words, we have
	\[
	a(\so_h f,\vRB) = (f,\vRB)_\Omega\qquad\forall \vRB\in\Vhrb.
	\]
	It holds that the eigenpairs of Problem~\ref{pb:rb_eig} and $\so_h$ are related as in the continuous case. In addition, the operator $\so_h$ is compact since its range is a finite dimensional space.
	
	As a consequence, we resort to the theory for spectral approximation of compact operators~\cite{boffi-acta,babuska-osborn} to carry out the error analysis for our numerical method. Nevertheless, in the context of eigenvalue problems, the mere convergence of the numerical method is not enough to guarantee a correct approximation of the spectrum. Indeed, besides convergence of the discrete eigenpairs, the \emph{correct spectral approximation} requires that no spurious eigenvalues pollute the spectrum, i.e.
	\begin{itemize}[-]
		\item each discrete eigenvalue converges to a continuous one (preserving the ordering);
		\item each continuous eigenvalue with multiplicity $m$ is approximated by exactly $m$ discrete eigenvalues, counted with their multiplicity. 
	\end{itemize}
	
	It is well known~\rv\cite{babuska-osborn,boffi-acta} \black that a sufficient condition for obtaining a correct approximation is the uniform convergence of the sequence $\so_h$ to $\so$ as $h\rightarrow0$, namely
	\begin{equation}\label{eq:unif_conv}
		\|\so-\so_h\|_{\mathcal{L}(\Hr{1}{\Omega})} = \sup_{f\in\Hr{1}{\Omega}} \frac{\|(\so-\so_h)f\|_{1,\Omega}}{\|f\|_{1,\Omega}} \longrightarrow 0 \quad\text{for}\quad h\rightarrow0.
	\end{equation}
	We emphasize that, \rv in our context, \black the uniform convergence~\eqref{eq:unif_conv} directly follows from an \textit{a priori} error estimate for the source problem of kind
	\begin{equation}\label{eq:h1_a_priori}
		\| \eigf - \uhrb \|_{1,\Omega}
		\le \mathfrak{E}(h)\|f\|_{0,\Omega} \le \mathfrak{E}(h)\|f\|_{1,\Omega}\qquad\text{with}\quad\mathfrak{E}(h)\rightarrow0\quad\text{for}\quad h\rightarrow0,
	\end{equation}
	indeed 
	\begin{equation}
		\|\so-\so_h\|_{\mathcal{L}(\Hr{1}{\Omega})} = \sup_{f\in\Hr{1}{\Omega}} \frac{\|(\so-\so_h)f\|_{1,\Omega}}{\|f\|_{1,\Omega}} = \sup_{f\in\Hr{1}{\Omega}}\frac{\| \eigf - \uhrb \|_{1,\Omega}}{\|f\|_{1,\Omega}} \le \mathfrak{E}(h).
	\end{equation}
	In the next section, we prove that~\eqref{eq:h1_a_priori} holds true.
	
	\section{Error analysis}\label{sec:error}
	
	We present the error estimate in energy norm for the source Problem~\ref{pb:source_rb}. As we discussed, this ensures the correct spectral approximation of the eigenpairs. We also prove the error estimate in $L^2-$norm. 
	
	Since the \textsf{rb}VEM approximation of Problem~\ref{pb:source_rb} is fully conforming, Galerkin orthogonality and Ce\`a's lemma hold straightforwardly~\cite{ciarlet}, yielding
	\begin{equation}\label{eq:error}
		| u-\uhrb |_{1,\Omega} \le C\, |u-\uIrb|_{1,\Omega}.
	\end{equation}
	We estimate the interpolation error in the following proposition.
	
	\begin{prop}\label{prop:interpolation_error}
		Let $\u\in\Hr{1+s}{\Omega}$, $1/2<s\le1$, be the unique solution to Problem~\ref{pb:source_cont} and $\uIrb\in\Vhrb$ its interpolant. There exists a constant $C$, independent of $h$, such that the following estimate holds
		$$
		| u-\uIrb |_{1,\Omega} \le C h^{{\red s}}\|f\|_{0,\Omega}.
		$$
	\end{prop}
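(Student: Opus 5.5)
The plan is to define the interpolant element by element through the degrees of freedom, which the \textsf{rb}VEM space shares with $\vunoloc$. On each $\element$ we set $\uIrb|_\element = \sum_{i=1}^N u(\vv_i)\,\baseVrb{i}$. This is well defined because $1+s>1$ gives $u\in C^0(\overline\Omega)$. Traces on shared edges are linear and fixed by vertex values, so $\uIrb$ is continuous and belongs to $\Vhrb$. If $u_I\in\vunoloc$ denotes the standard VEM interpolant with the same vertex values, the definition of $\baseVrb{i}$ gives the splitting
\[
\uIrb|_\element = \PiNabla u_I + w^{\mathrm{rb}},\qquad w^{\mathrm{rb}}=\sum_{j=1}^N (u_I-\PiNabla u_I)(\vv_j)\,\baserb{j}\in\Whrb(\element).
\]
This is the discrete analogue of $u_I=\PiNabla u_I+(u_I-\PiNabla u_I)$. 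The a-orthogonality of $\PunoK$ and $\Whrb(\element)$ then gives
\[
|u-\uIrb|_{1,\element}\le |u-\PiNabla u_I|_{1,\element}+|w^{\mathrm{rb}}|_{1,\element}.
\]

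The first term is standard. We write $|u-\PiNabla u_I|_{1,\element}\le |u-u_I|_{1,\element}+|\PiNabla(u_I-u)|_{1,\element}+|u-\PiNabla u|_{1,\element}$ and use the $H^1$-stability of $\PiNabla$. We then invoke the known VEM interpolation and polynomial approximation estimates on star-shaped elements (Bramble--Hilbert/Dupont--Scott, as in~\cite{beirao2013basic}), which give $Ch_\element^{s}|u|_{1+s,\element}$ for $1/2<s\le1$.

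The second term is the main obstacle. It involves the reduced basis functions, whose relation to the true VEM basis depends on $M$. We must bound it using only properties that hold for any reasonable reduced basis. The trace of $w^{\mathrm{rb}}$ on $\boK$ coincides with that of $u_I-\PiNabla u_I$, since both are piecewise linear with the same vertex values. The reduced basis approximation is a Galerkin projection onto a subspace of $\Vdelta$ that contains the discrete harmonic liftings $\hTheta_i$. I would therefore argue that $\widehat{w}^{\mathrm{rb}}$ is the $\aRB(\cdot,\cdot;\element)$-minimizer, within the reduced space plus lifting, for the given boundary datum. Its energy is then bounded by the energy of the particular competitor $\sum_j c_j\hTheta_j$. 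The bound follows from uniform equivalence of $\aRB(\cdot,\cdot;\element)$ with the reference $H^1$ seminorm, which holds because $\Bmatr_{\element,i}$ and its inverse are uniformly bounded under the mesh regularity assumptions. The $\hTheta_j$ are fixed functions on $\refelement$, so
\[
|w^{\mathrm{rb}}|_{1,\element}\le C\Big(\sum_j |(u_I-\PiNabla u_I)(\vv_j)|^2\Big)^{1/2}.
\]
For linear-on-edges functions this is bounded by $C|u_I-\PiNabla u_I|_{1/2,\boK}\le C|u_I-\PiNabla u_I|_{1,\element}$. The step uses scaling together with a Poincaré-type argument: the mean of $u_I-\PiNabla u_I$ on $\boK$ vanishes, and the vertex values are controlled by the broken trace norm on edges of comparable length. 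The right-hand side is then estimated as in the first term.

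Summing the squares over $\element\in\mesh$ gives $|u-\uIrb|_{1,\Omega}\le Ch^s|u|_{1+s,\Omega}$. The stability estimate~\eqref{eq:stability} then concludes the proof.
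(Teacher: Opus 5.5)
Your proof is correct, but it follows a different route from the paper's. The paper never introduces the VEM interpolant $u_I$. It uses only that the local interpolant $I_\element u=\sum_i u(\vv_i)\baseVrb{i}$ reproduces $\PunoK$, so that for every $p\in\PunoK$
\[
|u-I_\element u|_{1,\element}\le |u-p|_{1,\element}+\Big(\sum_{i=1}^N|\baseVrb{i}|_{1,\element}\Big)\|u-p\|_{\infty,\element}.
\]
It then asserts $|\baseVrb{i}|_{1,\element}\simeq 1$ by a piecewise scaling argument on the sub-triangulation. It concludes with a Dupont--Scott type bound for $|u-p|_{1,\element}+\|u-p\|_{\infty,\element}$, which is where $H^{1+s}\subset C^0$ enters. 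That argument is shorter and self-contained, since it needs no approximation result for the virtual space.

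Your splitting $\uIrb|_\element=\PiNabla u_I+w^{\mathrm{rb}}$ instead compares the \textsf{rb}VEM interpolant with the VEM one. It therefore imports the interpolation estimate for the enhanced space with fractional $s$ as a black box; that estimate is available in the VEM literature but is not proved in this paper. You then control the non-polynomial part through its vertex values, using the vanishing boundary mean, a Poincar\'e inequality on $\boK$, edgewise inverse estimates and a trace inequality.

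What your route adds is an explicit justification of a step the paper passes over quickly. $\baserbhat{j}$ depends on $\element$ through the reduced basis coefficients, so scaling alone does not bound $|\baserb{j}|_{1,\element}$ uniformly. Your Galerkin energy-minimization observation gives a bound independent of $\element$ and of $M$, and it plugs directly into the paper's shorter argument.

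One step should be restated. In the Appendix, each index $j$ is expanded in its own $M$-dimensional space spanned by $\rb_j^1,\dots,\rb_j^M$. Unless all corrections $\resKirb$ are computed in one common reduced space, the combination $\widehat w^{\mathrm{rb}}=\sum_j c_j\baserbhat{j}$ is Galerkin orthogonal only to the intersection of these per-index spaces. It then need not minimize the energy over $\sum_j c_j\hTheta_j$ plus their sum, so comparing it with the competitor $\sum_j c_j\hTheta_j$ is not justified as written.

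Instead, use $\aRB(\baserbhat{j},\baserbhat{j};\element)\le\aRB(\hTheta_j,\hTheta_j;\element)$ for each $j$ separately (Galerkin orthogonality plus Cauchy--Schwarz), together with the triangle inequality. Since $N$ is uniformly bounded, $|w^{\mathrm{rb}}|_{1,\element}\le C(\sum_j c_j^2)^{1/2}$ still follows.

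Also, what is uniformly bounded is the scale-invariant coefficient $|\det(\Bmatr_{\element,i}^{-1})|\,\Bmatr_{\element,i}^\top\Bmatr_{\element,i}$, not $\Bmatr_{\element,i}$ and its inverse (unless $\element$ is first rescaled to unit size). Only its upper bound is needed, because $|\baserb{j}|_{1,\element}^2=\aRB(\baserbhat{j},\baserbhat{j};\element)$ exactly.
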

	{\red
		\begin{proof}
			As usual, we estimate the interpolation error locally in a generic element $\element\in\mesh$. Thus, we consider the restriction of $\uIrb$ to $\element$ and denote it by $\interpolant u$. More precisely, the local interpolation operator $\interpolant:C^0(\element)\cap\HunoK\rightarrow\Vhrb(\element)\subset\HunoK$ is defined as
			\begin{equation}
				\interpolant u := \sum_{i=1}^{N} u(\vertex_i)\,\blue\baseVrb{i}\black,
			\end{equation}
			where $u(\vertex_i)$ are the degrees of freedom of $u$ in the discrete space $\Vhrb(\element)$, i.e. its values at the vertices of~$\element$, and $\blue\baseVrb{i}\black$ are the basis functions of the local space $\Vhrb(\element)$. 
			
			For $p \in \PunoK$ arbitrary we can write
			\begin{equation}\label{eq:tri_int}
				|u-\interpolant u|_{1,\element}
				\le
				|u-p |_{1,\element} + |p -\interpolant u|_{1,\element} 
			\end{equation}
			On the other hand, since $\interpolant p = p$, we estimate the term $|p -\interpolant u|_{1,\element}$ as
			\begin{equation}\label{eq:term_op}
				| p -\interpolant u|_{1,\element}
				= |\interpolant p-\interpolant u|_{1,\element}
				= |\interpolant(p - u)|_{1,\element}.
			\end{equation}
			\newcommand{\ballE}{B_E}
			Now we can write
			\begin{equation}\label{eq:bound}
				|\interpolant (u - p)|_{1,\element}
				= \left| \sum_{i=1}^{N} (u(\vertex_i) - p(\vertex_i))\,\blue\baseVrb{i}\black \right|_{1,\element}
				\le \left(\sum_{i=1}^{N}|\blue\baseVrb{i}\black|_{1,\element}\right) \|u - p\|_{\infty,\element}.
			\end{equation}
			Now, thanks to our shape regularity assumptions and a standard scaling argument \rv{applied ``piecewise'' to each cell of the shape-regular sub-triangulation of $\element$ and $\refelement$ (see Figure~\ref{fig:affine_mapping})}\black, we have that
			\[
			|\blue\baseVrb{i}|_{1,\element}  \simeq 1.
			\]
			Then, as the shape regularity of the elements implies that $N$ is uniformly bounded,  we have
			\begin{equation}
				| u - \interpolant u |_{1,\element} \le C\, \inf_{p \in \PunoK} \left(
				| u - p |_{1,\element} + \| u - p \|_{\infty,\element} 	\right).
			\end{equation}
			We can now take $p$ as the $L^2(\ballE)$ projection of an $H^s$ bounded extension of $u$ to the smallest ball $\ballE$ circumscribed to $\element$, and by \cite[Theorem 3.1.4]{ciarlet} we obtain
			\begin{equation}
				| u - \interpolant u |_{1,\element} \le C\, h^s | u |_{1+s,\element}.
			\end{equation}
			The global estimate is then obtained by summing on all local contributions.
		\end{proof}
	}
	
	By combining~\eqref{eq:error} with Proposition~\ref{prop:interpolation_error}, we obtain the following optimal error estimate in energy norm.
	
	\begin{teo}\label{teo:h1}
		Let $\u\in\Hr{1+s}{\Omega}$, $1/2<s\le1$, be the solution to Problem~\ref{pb:source_cont} and let $\uhrb\in\Vhrb$ be the solution to the discrete Problem~\ref{pb:source_rb}. There exists a positive constant $C$, independent of $h$, such that
		$$
		| u - \uhrb |_{1,\Omega} \le C h^{{\red s}} \|f\|_{0,\Omega}.
		$$
	\end{teo}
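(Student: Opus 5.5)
The estimate follows by combining the quasi-optimality bound \eqref{eq:error} with the interpolation estimate of Proposition~\ref{prop:interpolation_error}.

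\emph{Step 1 (conformity and Galerkin orthogonality).} Since $\Vhrb\subset\Hunozero$, Problem~\ref{pb:source_rb} is a conforming Galerkin discretization of Problem~\ref{pb:source_cont}. Subtracting the two variational equations gives
$$
a(u-\uhrb,\vRB)=0\qquad\text{for all }\vRB\in\Vhrb.
$$
Because $a(\cdot,\cdot)$ is exactly the $H^1$ seminorm inner product, the solution $\uhrb$ is the $a$-orthogonal projection of $u$ onto $\Vhrb$. Hence \eqref{eq:error} holds with $C=1$:
$$
|u-\uhrb|_{1,\Omega}\le|u-\vRB|_{1,\Omega}\qquad\text{for all }\vRB\in\Vhrb.
$$

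\emph{Step 2 (choice of comparison function).} I take $\vRB=\uIrb$. This is admissible: since $s>1/2$, the embedding $\Hr{1+s}{\Omega}\subset C^0(\overline\Omega)$ holds in two dimensions, so the vertex values of $u$ are well defined. Moreover, $\uIrb$ vanishes on $\bOmega$ because $u=0$ there. The elementwise interpolants $\interpolant u$ agree at shared vertices, and on each edge their traces are the linear functions determined by those vertex values. The glued function is therefore continuous, and $\uIrb\in\Vhrb$.

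\emph{Step 3 (conclusion).} Proposition~\ref{prop:interpolation_error} gives
$$
|u-\uIrb|_{1,\Omega}\le Ch^s\|f\|_{0,\Omega},
$$
where the regularity bound \eqref{eq:stability} has been used to pass from $|u|_{1+s,\Omega}$ to $\|f\|_{0,\Omega}$. Combining this with Step 1 yields the claimed estimate.

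The only point needing care is Step 2, namely that the locally defined interpolants actually glue into a conforming global function in $\Vhrb$. This requires checking that on each edge the trace of every function $\baseVrb{i}$ is the linear interpolant of its vertex values. It holds because $\PiNabla e_i$ is linear on each edge. In addition, each $\baserb{j}$ has trace $\widehat g_j\circ\map$, which is linear on edges: $\map$ is affine on each $\Tri_i$ and maps every edge of $\element$ onto an edge of $\refelement$.
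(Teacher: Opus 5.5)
Your proof is correct and follows the paper's argument: the theorem is obtained by combining the C\'ea bound \eqref{eq:error} with the interpolation estimate of Proposition~\ref{prop:interpolation_error}, and, as you note, that bound holds with $C=1$ because $a$ is the $H^1$-seminorm inner product. You also check that the elementwise interpolants glue into a conforming function of $V_h^{\mathrm{rb}}$, using that the traces of $\Pi^\nabla e_i$ and of the pulled-back reference functions $\widehat g_j$ are linear on each edge; the paper takes this for granted, and your verification is sound.
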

	
	By Poincar\'e inequality in $\Hunozero$, we exploit the equivalence between $\|\cdot\|_{1,\Omega}$ and $|\cdot|_{1,\Omega}$ to get the uniform convergence~\eqref{eq:unif_conv} directly from the a priori error estimate.
	
	In order to complete the error analysis for the source problem, we resort to a standard duality argument for proving the error estimate in $L^2-$norm.
	
	\begin{teo}\label{theo:l2_rbvem}
		Let $\u\in\Hr{1+s}{\Omega}$, $1/2<s\le1$, be the solution to Problem~\ref{pb:source_cont} and let $\uhrb\in\Vhrb$ be the solution to the discrete Problem~\ref{pb:source_rb}. There exists a positive constant $C$, independent of $h$, such that, {\red letting $\sigma_0 = \pi/\theta_0$ where $\theta_0$ is the maximum of the interior angles  of the polygonal domain $\Omega$, for $\epsilon > 0$ arbitrarily small we have}
		$$
		\| u - \uhrb \|_{0,\Omega} \le C h^{{\red s}+{\red \sigma_0-\epsilon}} \|f\|_{0,\Omega}.
		$$
	\end{teo}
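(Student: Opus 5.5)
The proof will be the standard Aubin--Nitsche duality argument, which applies directly because \textsf{rb}VEM is a conforming Galerkin method in $\Vhrb$: there is no consistency error to track, only Galerkin orthogonality. Set $e=\u-\uhrb\in\Hunozero$ and introduce the dual problem: find $\psi\in\Hunozero$ such that $a(v,\psi)=(e,v)_\Omega$ for all $v\in\Hunozero$, i.e.\ $\psi=\so e$. Taking $v=e$ and using Galerkin orthogonality $a(e,\vRB)=0$ for all $\vRB\in\Vhrb$ gives
\begin{equation*}
\|e\|_{0,\Omega}^2=a(e,\psi)=a(e,\psi-\psi_I^{\mathrm{rb}})\le |e|_{1,\Omega}\,|\psi-\psi_I^{\mathrm{rb}}|_{1,\Omega},
\end{equation*}
where $\psi_I^{\mathrm{rb}}\in\Vhrb$ is the interpolant used in Proposition~\ref{prop:interpolation_error}.

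The first factor is bounded by Theorem~\ref{teo:h1}: $|e|_{1,\Omega}\le Ch^{s}\|f\|_{0,\Omega}$. For the second factor I need a sharper regularity statement than~\eqref{eq:stability}. The exponent $\sigma_0=\pi/\theta_0$ comes from corner singularity theory for the Dirichlet Laplacian on a polygon (Grisvard): the solution with $L^2$ data belongs to $H^{1+r}(\Omega)$ for every $r<\min(1,\sigma_0)$, with $\|\psi\|_{1+r,\Omega}\le C_r\|e\|_{0,\Omega}$. I will take $r=\sigma_0-\epsilon$. In the convex case $\sigma_0\ge1$ and one can take $r=1$ directly; reading the statement with the understanding that the exponent is capped at one, this yields the usual $h^{s+1}$ bound.

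The step I expect to be the main obstacle is extending the interpolation estimate to a general regularity index. Proposition~\ref{prop:interpolation_error} is stated only for the primal solution with exponent $s$, but its proof is purely local: it combines the bound $|\interpolant(u-p)|_{1,\element}\le C\|u-p\|_{\infty,\element}$ with a Bramble--Hilbert/Dupont--Scott estimate in $H^{1+r}$. This needs $r>0$, so that $H^{1+r}\subset C^0$ in two dimensions and the nodal interpolant is defined. Rerunning that argument gives $|\psi-\psi_I^{\mathrm{rb}}|_{1,\Omega}\le Ch^{r}|\psi|_{1+r,\Omega}$, and I must check that the embedding and scaling constants stay uniform for $r$ in $(0,1]$.

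Combining the three bounds,
\begin{equation*}
\|e\|_{0,\Omega}^2\le Ch^{s}\|f\|_{0,\Omega}\,h^{\sigma_0-\epsilon}\|e\|_{0,\Omega},
\end{equation*}
and dividing by $\|e\|_{0,\Omega}$ gives the claimed estimate.
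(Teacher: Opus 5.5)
Your proposal is correct and follows the paper's proof essentially step by step: the same dual problem with Grisvard's corner regularity at $\sigma=\sigma_0-\epsilon$, Galerkin orthogonality from the conformity of the \textsf{rb}VEM space, Theorem~\ref{teo:h1} for the energy error, and the local interpolation estimate of Proposition~\ref{prop:interpolation_error} applied to the dual solution, which the paper uses without comment (for fixed $\epsilon$ you do not need the constants to be uniform in $r$). Your remark that the dual exponent must be capped at one in the convex case is also right, and the paper's argument implicitly relies on it too, since $L^2$ data and piecewise linear interpolation cannot give more than $h^{1}$ from the dual factor.
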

	
	\begin{proof}
		Let $\udual\in\Hr{1+\sigma}{\Omega}$ be the solution to the dual problem
		\begin{equation}
			a(v,\udual) = (v,u-\uhrb)\qquad\forall v\in\Hunozero,
		\end{equation}
		which satisfies, for all $\sigma = \sigma_0-\epsilon$ with $ 0 < \sigma < \sigma_0$ (see \cite{grisvard2011elliptic}), 
		\begin{equation}\label{eq:stability_eta}
			|\eta|_{1+\sigma,\Omega} \le C \|u-\uhrb\|_{0,\Omega}.
		\end{equation}
		By testing the above equation with $v=u-\uhrb$ we estimate the error in the $L^2-$norm as follows
		\begin{equation}
			\begin{aligned}
				\|u-\uhrb\|_{0,\Omega}^2 &= a(u-\uhrb,\udual) = a(u-\uhrb,\udual-\udual_{I})&&\text{(Galerkin orthogonality)}\\
				& \le C\, |u-\uhrb|_{1,\Omega} |\udual-\udual_{I}|_{1,\Omega}&&\text{(continuity of $a$)}\\
				& \le C h^{{\red s}}|u|_{1+s,\Omega}|\udual-\udual_{I}|_{1,\Omega}&&\text{(Theorem~\ref{teo:h1})}\\
				& \le C h^{{\red s}}\|f\|_{0,\Omega}\,h^{{\red \sigma}}\,|\udual|_{1+\sigma,\Omega}&&\text{(stability and interpolation estimate)}\\
				&\le C h^{{\red s}+{\red \sigma}}\|f\|_{0,\Omega}\|u-\uhrb\|_{0,\Omega},&&\text{(\eqref{eq:stability_eta} and $1/2<s\le1$)}\\
			\end{aligned}
		\end{equation}
		where $\eta_I\in\Vhrb$ denotes the interpolant of $\eta$ in the discrete space $\Vhrb$.
		A division by $\|u-\uhrb\|_{0,\Omega}$ yields the desired result. 
	\end{proof}
	
	{\red	\begin{rem}
			We observe that in the case if the domain $\Omega$ is convex, we gain an additional power of $h$ in the $L^2-$norm estimate with respect to the error in energy norm. On the other hand, if the domain has large interior angles we gain a power $h^{\sigma_0 - \varepsilon}$, $\sigma_0$ depending on the maximum interior angle of the domain.
	\end{rem}}
	
	\section{Error estimates for eigenvalue problem}\label{sec:error_eig}
	
	We are now ready to present the convergence results for the approximation of the Laplace eigenproblem in \textsf{rb}VEM framework.
	
	By applying the results given by the Babu\v{s}ka--Osborn theory (see~\cite[Thm.~9.3--9.7]{boffi-acta} and~\cite[Thm.~7.1--7.4]{babuska-osborn}), we obtain the error estimates for the approximation of both the eigenvalues and eigenfunctions. To this aim, we recall the definition of gap between two spaces $\Ecal\subset\Hr{\ell}{\Omega}$ and $\Fcal\subset\Hr{\ell}{\Omega}$ as
	$$
	\deltahat_\ell(\Ecal,\Fcal) = \max\{\delta_\ell(\Ecal,\Fcal),\delta_\ell(\Fcal,\Ecal)\},
	$$
	where
	$$
	\delta_\ell(\Ecal,\Fcal) = \sup_{v\in\Ecal,\|v\|_{\ell,\Omega}=1}\inf_{w\in\Fcal,\|w\|_{\ell,\Omega}=1}\|v-w\|_{\ell,\Omega}.
	$$
	In our case, we are interested in $\ell=0,1$ measuring the gap either in the $L^2-$ or $H^1-$ norm.
	
	\begin{teo}\label{theo:convergence}
		Let $\eig$ be an eigenvalue of Problem~\ref{pb:weak} of multiplicity $m$ and $\Ecal_\eig$ be the corresponding eigenspace. Then, there are exactly $m$ discrete eigenvalues of Problem~\ref{pb:rb_eig} $\eig_{j(i),h}^{\blue\mathrm{rb}\black}$ ($i=1,\dots,m$) tending to $\eig$. Moreover, assuming that all $\eigf\in\Ecal_\eig$ satisfy $\eigf\in\Hr{1+s}{\Omega}$ with $1/2<s\le1$, the following inequalities hold true
		\begin{equation*}
			|\eig-\eig_{j(i),h}^{\blue\mathrm{rb}\black}|\le Ch^{\red 2s},\qquad
			\deltahat_1(\Ecal_\eig,\oplus_i\Ecalrb_{j(i),h})\le C h^{\red s},
			\qquad\deltahat_0(\Ecal_\eig,\oplus_i\Ecalrb_{j(i),h})\le C h^{\red s + \sigma_0-\epsilon},
		\end{equation*}
		where  $\Ecalrb_{r,h}$ is the eigenspace spanned by the {\red approximate} eigenfunctions $\eigfrb_{r,h}$.
	\end{teo}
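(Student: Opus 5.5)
The plan is to apply the Babu\v{s}ka--Osborn theory for compact self-adjoint operators (\cite[Thm.~9.3--9.7]{boffi-acta}, \cite[Thm.~7.1--7.4]{babuska-osborn}) to the pair $\so,\so_h$ of Section~\ref{sec:spec_theory}. Problem~\ref{pb:rb_eig} is a conforming Galerkin discretization, so $\so_h$ is the Ritz projection $P_h$ of $\so$ onto $\Vhrb$ with respect to $a(\cdot,\cdot)$, i.e. $\so_h=P_h\so$. Hence $\so_h$ is self-adjoint with respect to $a$ on $\Hunozero$.

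First we establish the correct spectral approximation. Theorem~\ref{teo:h1}, combined with the Poincar\'e inequality, gives $\|\so-\so_h\|_{\mathcal{L}(\Hr{1}{\Omega})}\le Ch^s\to0$, which is exactly \eqref{eq:unif_conv}. By the standard theory, uniform convergence implies that the eigenvalue $1/\eig$ of $\so$, of multiplicity $m$, is approximated by exactly $m$ eigenvalues $1/\eig^{\mathrm{rb}}_{j(i),h}$ of $\so_h$, counted with multiplicity. It also implies that no spurious modes appear, and that the spectral projections converge in norm. Inverting the eigenvalues gives the first statement.

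Next come the eigenfunction estimates. The gap estimate is $\deltahat_1(\Ecal_\eig,\oplus_i\Ecalrb_{j(i),h})\le C\|(\so-\so_h)|_{\Ecal_\eig}\|_{\mathcal{L}(\Hr{1}{\Omega})}$. For $\eigf\in\Ecal_\eig$ we have $\so\eigf=\eigf/\eig$, so $(\so-\so_h)\eigf$ is the error of the source problem with datum $f=\eigf$. Theorem~\ref{teo:h1} then bounds it by $Ch^s\|\eigf\|_{0,\Omega}$; here the assumed regularity $\eigf\in\Hr{1+s}{\Omega}$ is used, and $\Ecal_\eig$ is finite dimensional, so all norms on it are equivalent. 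For the $L^2$ gap we use the same theorem in the $L^2$ setting: the operators restricted to $L^2$ still converge uniformly by Theorem~\ref{theo:l2_rbvem}, and the gap is bounded by $\|(\so-\so_h)|_{\Ecal_\eig}\|_{\mathcal{L}(L^2)}\le Ch^{s+\sigma_0-\epsilon}$.

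The eigenvalue estimate is the step needing care, since the double rate $h^{2s}$ does not follow from the operator-norm bound alone. We use the refined Babu\v{s}ka--Osborn estimate
\[
|\eig-\eig^{\mathrm{rb}}_{j(i),h}|\le C\Big(\sum_{k,l}|a((\so-\so_h)\eigf_k,\eigf_l)|+\|(\so-\so_h)|_{\Ecal_\eig}\|^2_{\mathcal{L}(\Hr1\Omega)}\Big),
\]
where $\{\eigf_k\}$ is a basis of $\Ecal_\eig$. The second term is $O(h^{2s})$. For the first term, Galerkin orthogonality gives
\[
a((\so-\so_h)\eigf_k,\eigf_l)=\eig\, a((\so-\so_h)\eigf_k,\so\eigf_l-\so_h\eigf_l),
\]
using that $\eigf_l=\eig\,\so\eigf_l$ and that $a(\so\eigf_k-\so_h\eigf_k,v)=0$ for all $v\in\Vhrb$. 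Continuity of $a$ and Theorem~\ref{teo:h1} applied twice then bound this by $Ch^{2s}$. Alternatively, one can use the Rayleigh-quotient identity $\eig^{\mathrm{rb}}-\eig=\|\nabla(\eigf-w)\|^2-\eig\|\eigf-w\|^2$ for suitably normalized $w\in\Ecalrb$, combined with the gap estimates. This gives $h^{2s}+h^{2(s+\sigma_0-\epsilon)}\lesssim h^{2s}$, and we expect this bookkeeping to be the main obstacle.
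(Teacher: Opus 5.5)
Your proposal is correct and follows the paper's route. Like the paper, you get uniform convergence $\|T-T_h\|_{\mathcal{L}(H^1(\Omega))}\le Ch^s$ from Theorem~\ref{teo:h1} and then apply the Babu\v{s}ka--Osborn estimates, and you supply the Galerkin-orthogonality argument for the $h^{2s}$ eigenvalue rate that the paper only cites.

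One small correction: $T_h=P_hT$ is not self-adjoint with respect to $a$ on $H^1_0(\Omega)$. Its $a$-adjoint is $TP_h$, and it is self-adjoint only in the $L^2$ inner product or when restricted to $V_h^{\mathrm{rb}}$. This does no harm, because the extra factor $\|(T-TP_h)|_{\mathcal{E}_\lambda}\|$ in the general Babu\v{s}ka--Osborn bound satisfies $(T-TP_h)u=\lambda T(T-T_h)u$ for $u\in\mathcal{E}_\lambda$ and is therefore also $O(h^s)$.
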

	
	{\red Theorem \ref{theo:convergence} gives us an approximation result for the eigenvalues and for the eigenfunctions in $\Vhrb$.	If we interpret Problem \ref{pb:rb_eig} as a non conforming discretization of Problem \ref{pb:weak} in the global VEM space $\Vh$, the eigenfunctions will be reconstructed using the basis for $\Vh$ rather than the one for $\Vhrb$, so that such a theorem will not apply directly. In order to get a convergence estimate we will resort to the following lemma.

		\begin{lem}\label{lem:transfer}
			Let $\vhrb \in \Vhrb(\element)$ and let $v_h \in \vunoloc$ be defined by $v_h = \sum_i \vhrb(\vertex_i) \base_{i}$. Then it holds
			\begin{gather}
				| \vhrb - v_h |_{1,\element} + h^{-1} 	\| \vhrb - v_h \|_{0,\element} \le C\, \inf_{p \in \PunoK} \left(| \vhrb - p |_{1,\element} + h^{-1} \| \vhrb - p \|_{0,\element}
				\right)
			\end{gather}
		\end{lem}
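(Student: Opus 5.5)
The plan is to exploit that $v_h$ and $\vhrb$ share the vertex values, and that both the VEM and the \textsf{rb}VEM local interpolation operators reproduce linear polynomials. Let $I_h^\element:C^0(\element)\cap\HunoK\to\vunoloc$ denote the VEM interpolant, $I_h^\element w=\sum_i w(\vertex_i)\base_i$; then $v_h=I_h^\element\vhrb$. Since $\PunoK\subset\vunoloc$ and $\base_i$ is nodal, $I_h^\element p=p$ for every $p\in\PunoK$. Likewise $\interpolant$ (the \textsf{rb}VEM interpolant from the proof of Proposition~\ref{prop:interpolation_error}) satisfies $\interpolant\vhrb=\vhrb$ and $\interpolant p=p$. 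Hence, for arbitrary $p\in\PunoK$,
\[
\vhrb - v_h = (\vhrb-p) - I_h^\element(\vhrb-p),
\]
and the triangle inequality reduces the claim to bounding $|I_h^\element w|_{1,\element}+h_\element^{-1}\|I_h^\element w\|_{0,\element}$ for $w=\vhrb-p\in\Vhrb(\element)$.

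For this bound I write $I_h^\element w=\sum_i w(\vertex_i)\base_i$ and use $|\base_i|_{1,\element}\lesssim 1$ and $\|\base_i\|_{0,\element}\lesssim h_\element$. Both follow from the standard scaling properties of VEM basis functions on shape regular elements: $\base_i$ has piecewise linear boundary data of size one, so the harmonic-type lifting has bounded energy, and the $L^2$ bound follows from the maximum principle, or from Poincaré plus the boundary trace. Since $N$ is uniformly bounded, this gives
\[
|I_h^\element w|_{1,\element}+h_\element^{-1}\|I_h^\element w\|_{0,\element}\le C\max_i|w(\vertex_i)| .
\]

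The main obstacle is controlling the vertex values of $w$ by $|w|_{1,\element}+h^{-1}\|w\|_{0,\element}$. Point values are not controlled by the $H^1$ norm in 2D, so an inverse-type estimate is needed, and it must hold on the finite-dimensional space $\Vhrb(\element)$. I would argue by scaling through the piecewise affine map $\map$. The space $\Vhrb(\element)$ pulled back to $\refelement$ is contained in $\mathbb{P}_1+\mathrm{span}\{\baserbhat{j}\}$, a finite-dimensional subspace of the fixed finite element space $\Vdelta$ on the fixed mesh $\meshdelta$, up to the affine pieces of $\map$. On $\Vdelta$ the inverse inequality $\|\hat w\|_{\infty,\refelement}\le C_\delta\|\hat w\|_{1,\refelement}$ holds with $C_\delta$ depending only on $\meshdelta$, which is fixed once and for all, so $C_\delta$ is independent of $h$ and of $\element$. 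Note that $\hat p=p\circ\map^{-1}$ is only piecewise linear on the $\That_i$, but it still lies in $\Vdelta$ provided $\meshdelta$ is compatible with the partition \eqref{eq:partition}, which I assume. Mapping back with the shape-regular piecewise affine $\map$, with Jacobians scaling like $h_\element$, gives
\[
\max_i|w(\vertex_i)|\le\|w\|_{\infty,\element}\le C\big(|w|_{1,\element}+h_\element^{-1}\|w\|_{0,\element}\big).
\]

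Combining the three steps, $|\vhrb-v_h|_{1,\element}+h^{-1}\|\vhrb-v_h\|_{0,\element}\le C\big(|\vhrb-p|_{1,\element}+h^{-1}\|\vhrb-p\|_{0,\element}\big)$ for every $p\in\PunoK$. Taking the infimum over $p$ yields the statement, with $h$ read as $h_\element$.
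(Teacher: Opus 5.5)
Your proof is correct, but its key step differs from the paper's. The paper writes $\vhrb-v_h=\sum_i(\vhrb(\vertex_i)-p(\vertex_i))(\baseVrb{i}-\base_i)$ and bounds it by $\|\vhrb-p\|_{\infty,\boK}\sum_i(|\baseVrb{i}|_{1,\element}+|\base_i|_{1,\element})$. It then uses that the trace of $\vhrb-p$ is piecewise linear on the $N$ edges of $\element$: the reduced basis correction vanishes on $\partial\refelement$, and $\map$ is affine on each edge. Hence a one-dimensional inverse estimate on $\boK$, followed by a scaled trace inequality, bounds the vertex values by $|\vhrb-p|_{1,\element}+h^{-1}\|\vhrb-p\|_{0,\element}$.

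You control the same vertex values through an interior $L^\infty$--$H^1$ inverse inequality on the pulled-back fine space $\Vdelta$. This is legitimate, and your compatibility assumption on $\meshdelta$ is not even needed: $\Vdelta$ plus the continuous piecewise linears on $\{\That_i\}$ is still a fixed finite-dimensional space of continuous functions. The price is that $C$ now depends on the offline mesh. For quasi-uniform $\meshdelta$ the discrete Sobolev constant grows like $(1+|\log\delta|)^{1/2}$. The paper's boundary argument only needs an inverse estimate on the $N$-dimensional space of piecewise linear traces, whose constant depends on $N$ and shape regularity alone.

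In exchange, your splitting $\vhrb-v_h=w-I_h^\element w$ requires bounds only on the VEM basis $\base_i$, not on $\baseVrb{i}$. You also treat the term $h^{-1}\|\vhrb-v_h\|_{0,\element}$ explicitly, which the paper's proof leaves implicit. Combining your splitting with the paper's boundary inverse estimate gives the cleanest argument.

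One small correction: the enhanced basis functions satisfy $\Delta\base_i\in\PunoK$ rather than $\Delta\base_i=0$, so the maximum principle is not available. Your alternative, Poincar\'e plus trace, is the right justification for $\|\base_i\|_{0,\element}\lesssim h_\element$.
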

		\begin{proof}
			
			We observe that, as both $\vunoloc$ and $\Vhrb(E)$ exactly reproduce linears, for all $p \in \PunoK$ we have that
			\[
			\sum_i p(\vertex_i)(\base_i - \blue\baseVrb{i}) = 0.
			\]
			Then for $p \in \PunoK$ arbitrary we can write
			\begin{equation}
				\begin{aligned}
					| \vhrb - v_h |_{1,\element}  &= 	| \sum_i \vhrb(\vertex_i)  (\blue\baseVrb{i}
					- \base_i) |_{1,\element} \\&= | \sum_i (\vhrb(\vertex_i) - p(\vertex_i) ) (\blue\baseVrb{i}
					- \base_i) |_{1,\element}\\
					&\leq  \| \vhrb - p \|_{\infty,\partial\element} \sum_i( | \blue\baseVrb{i} |_{1,\element} +	| \base_i |_{1,\element}) \le C\,  \| \vhrb - p \|_{\infty,\partial\element}.
				\end{aligned}
			\end{equation}
			By a standard scaling argument, as $\vhrb - p$ is a piecelinear function on $\partial \element$ we obtain
			\begin{equation}	| \vhrb - v_h |_{1,\element}  \le C\,\left( | \vhrb - p |_{1/2,\partial\element} + h^{-1/2} \| \vhrb - p \|_{0,\partial\element}\right).
			\end{equation}
			By  a standard trace theorem we then have
			\begin{equation}
				\begin{aligned}
					| \vhrb - v_h |_{1,\element}  &\le C\,\left( | \vhrb - p |_{1,\element} + h^{-1/2} \| \vhrb - p \|_{0,\element}^{1/2} | \vhrb - p |_{1,\element}^{1/2}\right)
					\\ &\le C\,\left( | \vhrb - p |_{1,\element} + h^{-1} \| \vhrb - p \|_{0,\element}\right).
				\end{aligned}
			\end{equation}
			By the arbitrariness of $p$ we get the desired result.
		\end{proof}
		Combining Theorem \ref{theo:convergence} with Lemma \ref{lem:transfer} we obtain the followin corollary
		\begin{corollary} Under the assumptions of Theorem \ref{theo:convergence}, letting 
			$\Ecal_{r,h} \subset \Vh$ be the eigenspace spanned by the approximate  eigenfunctions $\eigf_{r,h}$ of Problem \ref{pb:equiv_vem_eig}, we have that 
			\begin{equation*}
				\deltahat_1(\Ecal_\eig,\oplus_i\Ecal_{j(i),h})\le C h^t,
				\qquad\deltahat_0(\Ecal_\eig,\oplus_i\Ecal_{j(i),h})\le C h^{t+s},
			\end{equation*}
		\end{corollary}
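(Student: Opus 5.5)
The plan is to transfer the gap estimates of Theorem~\ref{theo:convergence} from $\Vhrb$ to $\Vh$ through the map $\mathcal{R}:\Vhrb\to\Vh$, $\mathcal{R}\vhrb=\sum_i\vhrb(\vertex_i)\base_i$. This map is a bijection, since both spaces share the vertex degrees of freedom. By the equivalence in Problem~\ref{pb:equiv_vem_eig}, the discrete eigenpairs are related by $\eigvem=\eigRB$ and $\eigfvem=\mathcal{R}\eigfrb$, because they share the same coefficient vectors. Hence $\oplus_i\Ecal_{j(i),h}=\mathcal{R}\big(\oplus_i\Ecalrb_{j(i),h}\big)$. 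I read the exponent $t$ in the statement as the regularity exponent $s$ of Theorem~\ref{theo:convergence}, so the targets are $h^s$ in $H^1$ and $h^{2s}$ (or $h^{s+\sigma_0-\epsilon}$) in $L^2$.

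\textbf{Step 1 (elementwise transfer).} For $\vhrb\in\oplus_i\Ecalrb_{j(i),h}$, pick $\eigf\in\Ecal_\eig$ realising the gap of Theorem~\ref{theo:convergence}. On each $\element$, apply Lemma~\ref{lem:transfer} with $p$ the polynomial approximation of $\eigf$ used in Proposition~\ref{prop:interpolation_error}, and add and subtract $\eigf$:
\[
|\vhrb-\mathcal{R}\vhrb|_{1,\element}+h^{-1}\|\vhrb-\mathcal{R}\vhrb\|_{0,\element}\le C\big(|\vhrb-\eigf|_{1,\element}+h^{-1}\|\vhrb-\eigf\|_{0,\element}+|\eigf-p|_{1,\element}+h^{-1}\|\eigf-p\|_{0,\element}\big).
\]
Summing over elements, and using Bramble--Hilbert ($|\eigf-p|_{1,\element}+h^{-1}\|\eigf-p\|_{0,\element}\le Ch^s|\eigf|_{1+s,\element}$) together with the $H^1$ and $L^2$ gap bounds of Theorem~\ref{theo:convergence}, gives
\[
|\vhrb-\mathcal{R}\vhrb|_{1,\Omega}\le C\big(h^s+h^{-1}h^{s+\sigma_0-\epsilon}\big)\le Ch^{s}
\]
for $\vhrb$ normalised. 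Since $\Ecal_\eig$ is finite dimensional, $|\eigf|_{1+s}\le C\|\eigf\|_{1}$ uniformly. The triangle inequality then yields $\delta_1(\Ecal_\eig,\oplus_i\Ecal_{j(i),h})\le Ch^s$. For the reverse direction $\delta_1(\oplus\Ecal_{j(i),h},\Ecal_\eig)$, the same estimate applies to $\mathcal{R}^{-1}$, since the transfer bound is symmetric in the two spaces. One also needs $\|\mathcal{R}\vhrb\|_1$ bounded away from zero for normalisation, which follows from the bound just proved for $h$ small.

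\textbf{Step 2 ($L^2$ gap).} The $h^{-1}\|\cdot\|_0$ part of the same inequality gives $\|\vhrb-\mathcal{R}\vhrb\|_{0,\Omega}\le Ch\,(h^s+h^{s+\sigma_0-\epsilon-1})$. This is the step I expect to be the main obstacle. With only the $L^2$ rate of Theorem~\ref{theo:convergence}, the term $h^{-1}\|\vhrb-\eigf\|_0$ loses a power, so the crude bound is $h^{s+1}$ only if $\sigma_0-\epsilon\ge1$. In general one gets $\min(h^{s+1},h^{s+\sigma_0-\epsilon})$, which is $\le Ch^{2s}$ in the convex case.

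To reach the stated $h^{t+s}$, I would sharpen the bound by working with $\|\cdot\|_{\infty,\partial\element}$ directly in the first estimate of Lemma~\ref{lem:transfer}. In $L^2$ the same argument gives $\|\vhrb-\mathcal{R}\vhrb\|_{0,\element}\le Ch\|\vhrb-p\|_{\infty,\partial\element}$, because $\|\base_i\|_{0,\element}\simeq h$. Bounding the maximum norm by an inverse/trace estimate then yields $\le Ch\,|\vhrb-p|_{1,\element}$ plus a lower-order term. With the local $H^1$ error already at order $h^s$, this gives $h^{1+s}\le Ch^{2s}$. Combining with the $L^2$ gap of Theorem~\ref{theo:convergence} via the triangle inequality completes the $\deltahat_0$ estimate.
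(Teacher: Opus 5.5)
The gap is in Step 1, not in Step 2 where you locate it. The inequality $C\bigl(h^s+h^{-1}h^{s+\sigma_0-\epsilon}\bigr)\le Ch^s$ holds only if $\sigma_0-\epsilon\ge 1$, i.e.\ for convex $\Omega$. For every nonconvex polygon (e.g.\ the L-shaped domain, $\sigma_0=2/3$), bounding $h^{-1}\|v-u\|_{0,\Omega}$ by the $L^2$ gap of Theorem~\ref{theo:convergence} gives only $O(h^{s+\sigma_0-\epsilon-1})$. So your $H^1$ transfer bound, and with it $\delta_1\le Ch^s$, is unproved in exactly the case the $\sigma_0$-dependent statement is about.

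Step 2, by contrast, is not a real obstacle. The paper's proof concludes $\delta_1\le Ch^s$ and $\delta_0\le Ch^{s+\sigma_0-\epsilon}$, so the undefined $t$ stands for $s$ and the $L^2$ exponent is $s+\sigma_0-\epsilon$. Your crude bound $\|v-\mathcal{R}v\|_{0,\Omega}\le C(h^{1+s}+h^{s+\sigma_0-\epsilon})$ already delivers what the paper's argument gives in $L^2$. Note this is the larger of the two quantities, not the minimum you wrote. Your alternative target $h^{2s}$ is not implied by Theorem~\ref{theo:convergence} when $s>\sigma_0-\epsilon$, since the theorem does not give it even for the rb eigenspaces.

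The missing idea is to use the free constant in $p$, which $|v-p|_{1,E}$ does not see. Write $v=v_h^{\mathrm{rb}}$ and take $p=\Pi^0_E u+\fint_E(v-u)$ in Lemma~\ref{lem:transfer}. Then $v-p=\bigl(v-u-\fint_E(v-u)\bigr)+(u-\Pi^0_E u)$. Poincar\'e--Wirtinger on the uniformly star-shaped $E$ gives $h^{-1}\|v-u-\fint_E(v-u)\|_{0,E}\le C|v-u|_{1,E}$, whence
\[
|v-\mathcal{R}v|_{1,E}+h^{-1}\|v-\mathcal{R}v\|_{0,E}\le C\bigl(|v-u|_{1,E}+|u-\Pi^0_E u|_{1,E}+h^{-1}\|u-\Pi^0_E u\|_{0,E}\bigr).
\]
This uses only the $H^1$ closeness, holds in any polygon, and yields at once $|v-\mathcal{R}v|_{1,\Omega}\le Ch^s$ and $\|v-\mathcal{R}v\|_{0,\Omega}\le Ch^{1+s}$. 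The triangle inequality with the $H^1$ and $L^2$ bounds of Theorem~\ref{theo:convergence} then finishes; this is the paper's proof.

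Your $\|\cdot\|_{\infty,\partial E}$ sharpening does not replace this. The ``lower-order term'' produced by the inverse/trace estimate is $h^{-1}\|v-p\|_{0,E}$, which scales exactly like $|v-p|_{1,E}$. It is the very term that caused the loss, unless the constant in $p$ is fixed as above. A side benefit of the mean-value choice is that you no longer need a single $u\in\mathcal{E}_\lambda$ that is simultaneously optimal in $H^1$ and in $L^2$.
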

		\begin{proof}
			Let $u_h\in \Vh$ and $\uhrb \in \Vhrb$ be such that for all nodes $\vertex$ of the mesh $\mesh$ we have $u_h(\vertex) = \uhrb(\vertex)$ and such that $u_h$ and $\uhrb$ are, respectively, functions in the eigespaces $\Ecal_{j(i),h}$ and $\Ecalrb_{j(i),h}$. By Theorem~\ref{theo:convergence}, there exists a $u \in \Ecal_\lambda$ with $u \in H^{s+1}(\Omega)$ such that 
			\[
			| u - \uhrb |_{1,\Omega} +h^{-\sigma_0+\varepsilon} 
			\| u - \uhrb \|_{0,\Omega}
			\le C h^s.
			\]
			Now, letting $\Pi^0_E$ denote the $L^2(E)$ projection onto $\PunoK$, we can write
			\begin{equation*}
				\aligned
				&\| \uhrb - u_h \|_{0,E} \leq
				| \uhrb - u_h |_{1,E} + h^{-1}	\| \uhrb - u_h \|_{0,E} \\
				&\qquad\leq		 | \uhrb - \Pi^0_\element u - \fint_\element (\uhrb - u) |_{1,E} + h^{-1} \| \uhrb - \Pi^0_\element u - \fint_\element (\uhrb - u) \|_{0,\element}\\ &\qquad\leq
				| \uhrb -  u |_{1,\element} + h^{-1} \| \uhrb -  u - \fint(\uhrb - u) \|_{0,\element} +  | u - \Pi^0 u |_{1,\element} + h^{-1} \| u - \Pi^0 u \|_{0,\Omega} \\
				&\qquad\le C\,\left( 	 | \uhrb -  u |_{1,\element} +  | u - \Pi^0 u |_{1,\element} + h^{-1} \| u - \Pi^0 u \|_{0,\element}\right).
				\endaligned
			\end{equation*}
			
			Theorem \ref{theo:convergence} combined with standard piecewise polynomial approximation results immediately yield
			\[\delta_1(\oplus_i\Ecal_{j(i),h},\Ecal_\lambda) \leq C h^s, \qquad \delta_0(\oplus_i\Ecal_{j(i),h},\Ecal_\lambda) \leq C h^{s+\sigma_0-\epsilon}.\]
			Analogously, one can prove that
			\[\delta_1(\Ecal_\lambda,\oplus_i\Ecal_{j(i),h}) \leq C h^s, \qquad \delta_0(\Ecal_\lambda,\oplus_i\Ecal_{j(i),h}) \leq C h^{s+\sigma_0-\epsilon},\]
			which concludes the proof.
		\end{proof}
	}
	
	\section{Numerical tests}\label{sec:tests}
	
	In this section we present a wide set of numerical tests confirming our theoretical findings. We first consider the Laplace eigenproblem on the unit square. We investigate the robustness of the \textsf{rb}VEM with respect to the number~$M$ of reduced basis functions employed to construct the discrete space. Then, we compare the approximation properties of our novel \textsf{rb}VEM scheme with those of standard VEM and the alternative rb--stab--VEM discussed in Remark~\ref{rem:rbstab}. We further test the optimality our method in the case of singular eigenfunctions by considering the L-shaped domain benchmark test and, finally, we solve a second order problem with piecewise constant coefficient.
	
	We point out that the discrete space $\Vhrb(\element)$ for a generic mesh element $\element$ with $N$ vertices is constructed by means of the reduced basis functions computed once and for all as described in~\cite[Sect. 6]{credali}. \rv More precisely, the offline construction of the reduced basis machinery was carried out for polygons having $N=4,\dots,14$ edges and on fine meshes $\meshdelta$ with $\delta=0.01$. Regarding the online construction of the ``virtual'' component of the stiffness and mass matrices, we remark that the computational cost is of the same order of magnitude of the cost of the evaluation of the consistency part. Thus, \textsf{rb}VEM is {only moderately} more expensive than the standard stabilized VEM, but generally leads to convergence improvements (see~\cite[Sect. 7]{credali}).\black
	
	\subsection{Laplace eigenproblem on the unit square}
	
	In order to have at disposal the eigenpairs in analytical form, we solve the Laplace eigenvalue problem in the unit square. In this case, the exact eigenvalues are given by $(i^2+j^2)\pi^2$ with $i,j\in\mathbb{N}\setminus\{0\}$ with eigenfunctions $\sin(i\pi x)\sin(j\pi y)$.
	
	As we discussed in Section~\ref{sec:rbvem}, the definition of the new discrete space $\Vhrb$ depends on the number~$M$ of reduced basis functions used to approximate the virtual basis functions, see~\eqref{eq:wrb} and~\ref{eq:rbspace}. On the other hand, we point out that the number~$M$ is fixed \textit{a priori}. The careful reader may wonder whether the computed eigenvalues depend on this parameter $M$. In order to investigate this aspect, we solve Problem~\ref{pb:rb_eig} with different discrete spaces $\Vhrb$ for several choices of $M$, varying from 1 to~50. We also consider three different meshes: Voronoi, dyadic (i.e. squares with an additional dofs at each edge middle point), and octagons, as depicted in Figure~\ref{fig:M-ind}. In the same figure, we also plot the discrete eigenvalues rescaled by a factor $\pi^2$ with respect to $M$. It is evident that the computed eigenvalues lie on horizontal lines, meaning that our method is robust since the eigenvalues are independent of $M$. For this reason, in the following numerical tests, we construct the space $\Vhrb$ with $M=1$.
	
	\begin{figure}[t]
		\subfloat[Voronoi mesh]{\includegraphics[width=0.3\linewidth,trim=30 20 20 0]{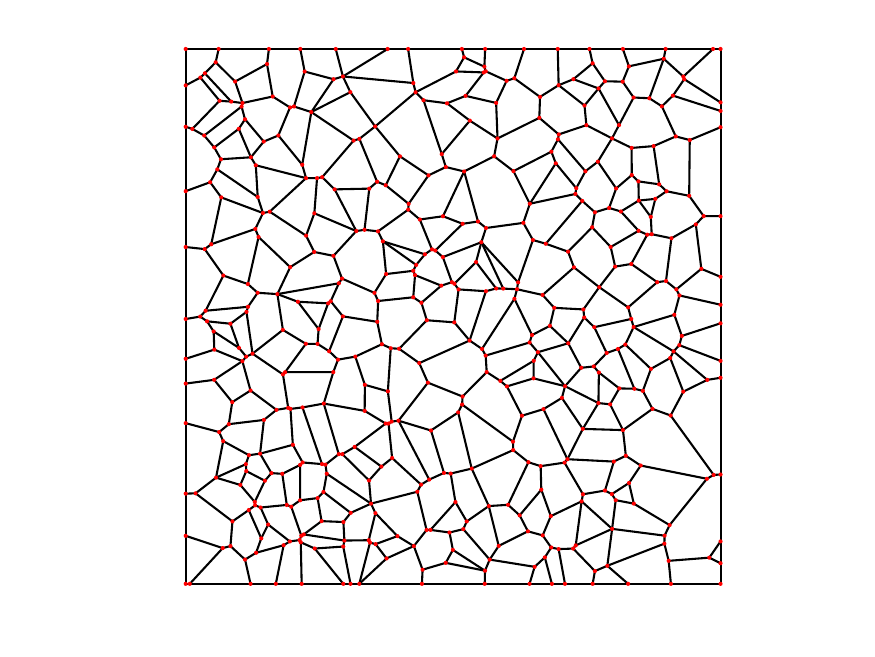}}\quad
		\subfloat[Dyadic mesh]{\includegraphics[width=0.3\linewidth,trim=20 20 20 0]{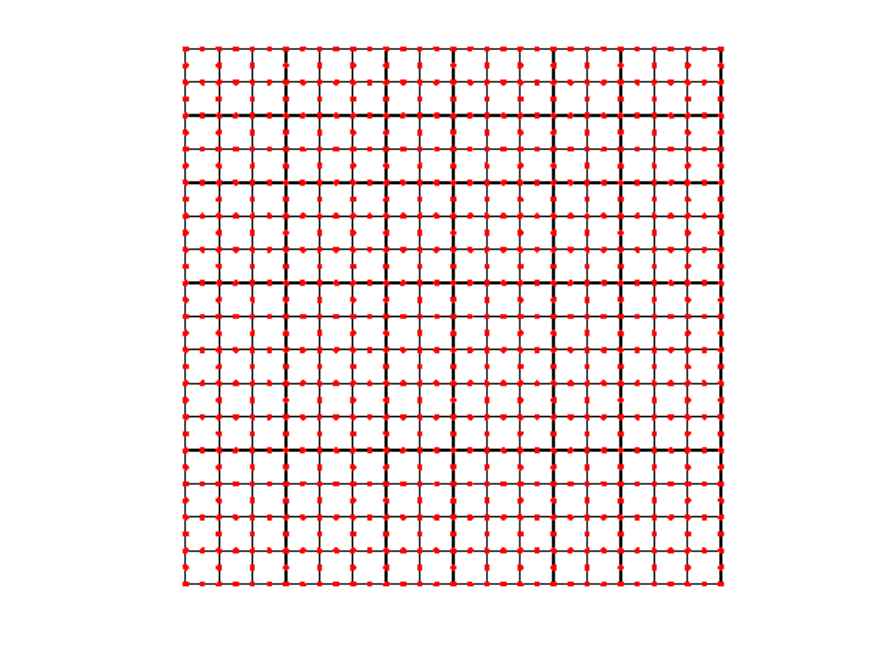}}\quad
		\subfloat[Octagons mesh]{\includegraphics[width=0.3\linewidth,trim=20 20 20 0]{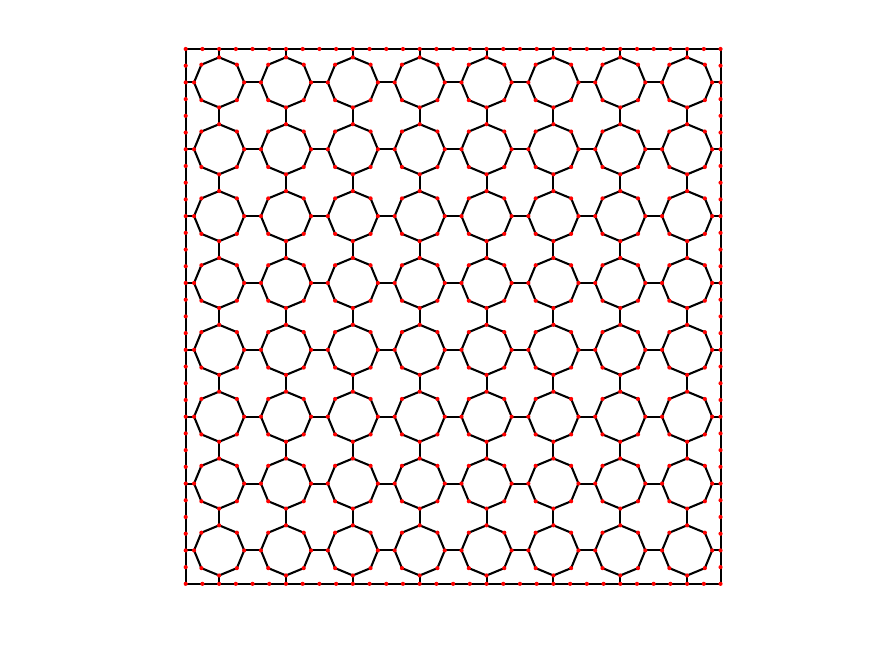}}
		
		\
		
		\subfloat[]{\includegraphics[width=0.325\linewidth]{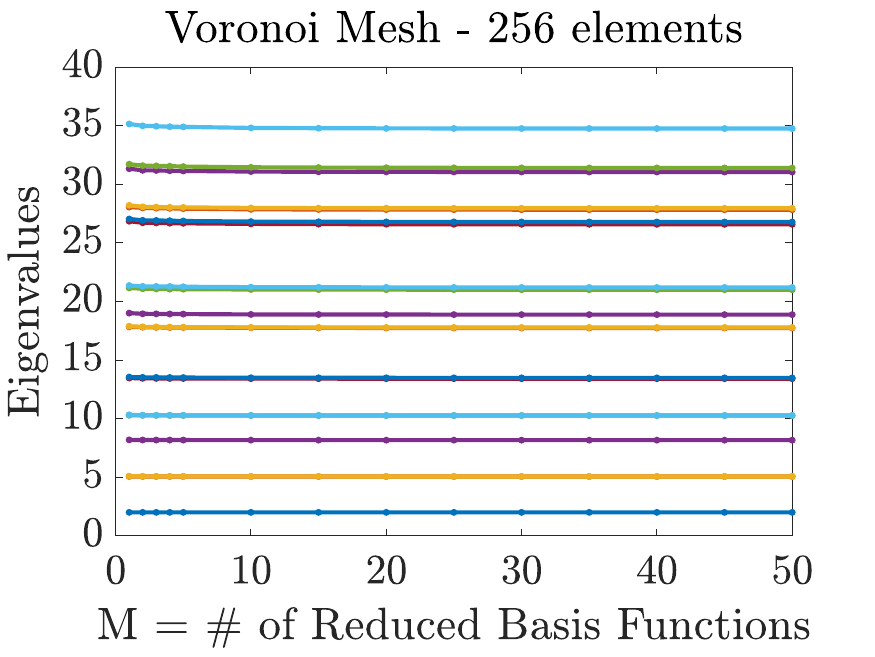}}
		\subfloat[]{\includegraphics[width=0.325\linewidth]{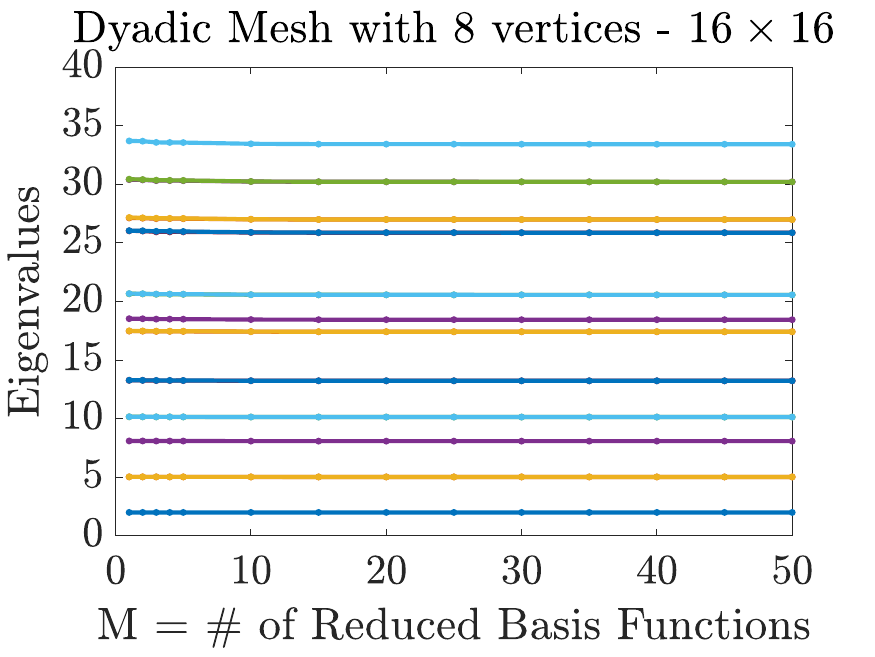}}
		\subfloat[]{\includegraphics[width=0.325\linewidth]{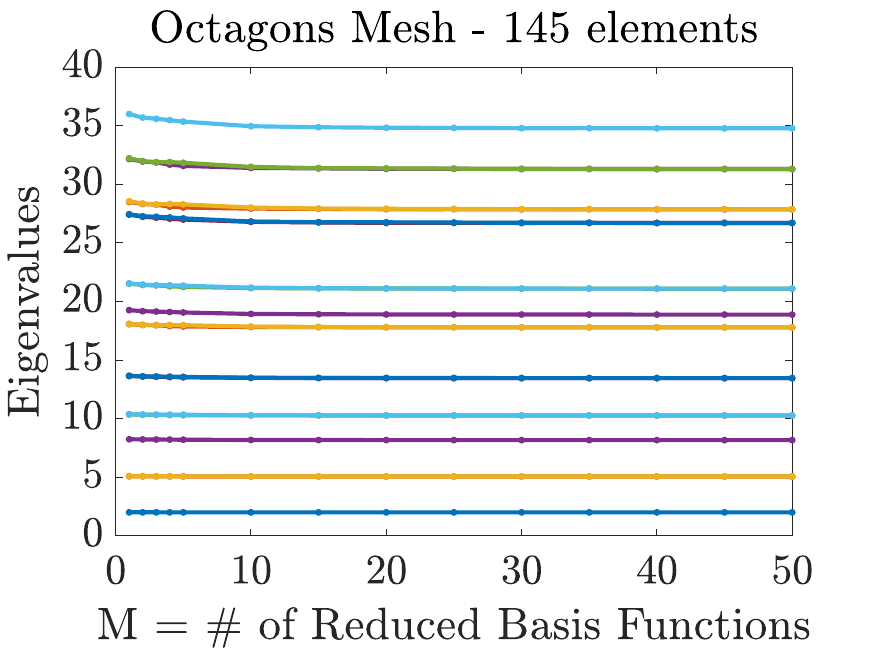}}
		
		\caption{The robustness of \textsf{rb}{VEM} with respect to the number of reduced basis functions $M$. In the top line we depict the three different considered meshes, whereas in the bottom line we plot the computed eigenvalues.}
		\label{fig:M-ind}
	\end{figure}
	
	We now compare the approximation properties of our \textsf{rb}{VEM} and rb--stab--VEM formulations with respect to those of the stabilized VEM on a sequence of Voronoi meshes. Concerning the VEM stabilization parameters, we consider two cases: a fully stabilized formulation with $\alpha=0.35$,  $\beta=1$  and a partially stabilized formulation with $\alpha=0.35$ and $\beta=0$, in the spirit of~\cite{BGG25}. The results are reported in Figure~\ref{fig:voronoi}, where each line refers to the employed numerical method, namely VEM ($\alpha=0.35$, $\beta=1$), VEM ($\alpha=0.35$, $\beta=0$), \textsf{rb}VEM and rb--stab--VEM ($\chi=0,1$), respectively. In the first column we plot the first 20 discrete and exact eigenvalues, in the second column we plot the error for the first 220 eigenvalues with respect to their index and, finally, in the third column we plot the error convergence for the first 10 eigenvalues when refining $h$. We observe that the standard VEM with $\beta=0$ and our \textsf{rb}VEM and rb--stab--VEM behave equivalently, showing a significant improvement with respect to standard VEM with $\beta=1$. We also point out that the convergence rate of the error is two as predicted by the theoretical results (see Theorem~\ref{theo:convergence}). Moreover, by comparing the error plots in the second column, we notice that the discrete methods based on the reduced basis approach approximate well a larger portion of the spectrum since less oscillations occur.In addition, the fully stabilized VEM ($\alpha=0.35$, $\beta=1$) produces spurious eigenvalues, as reported in Table~\ref{tab:spurious_eig_voronoi}, where spurious modes are displayed into boxes.
	
	\begin{figure}
		
		\includegraphics[width=0.325\linewidth]{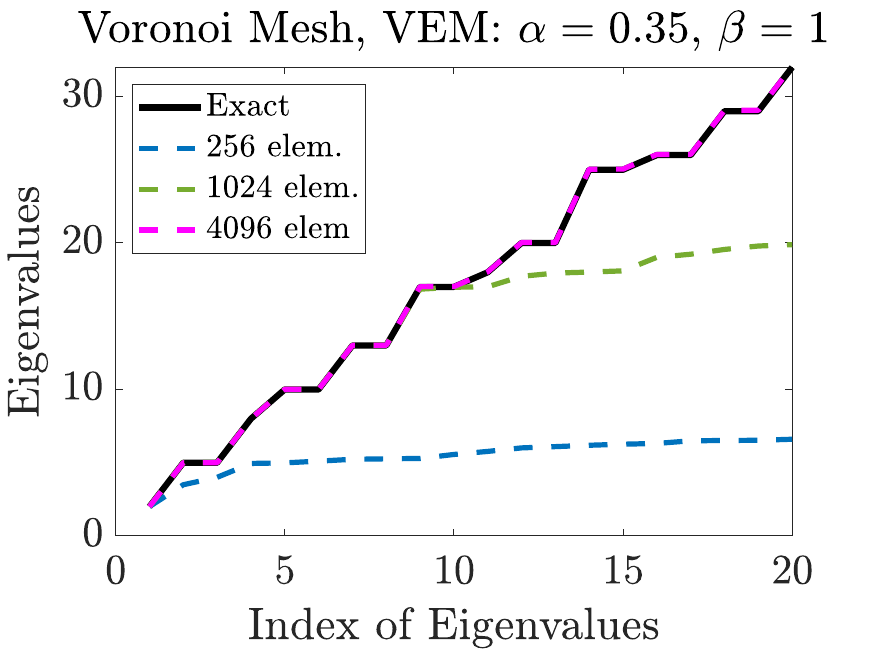}
		\includegraphics[width=0.325\linewidth]{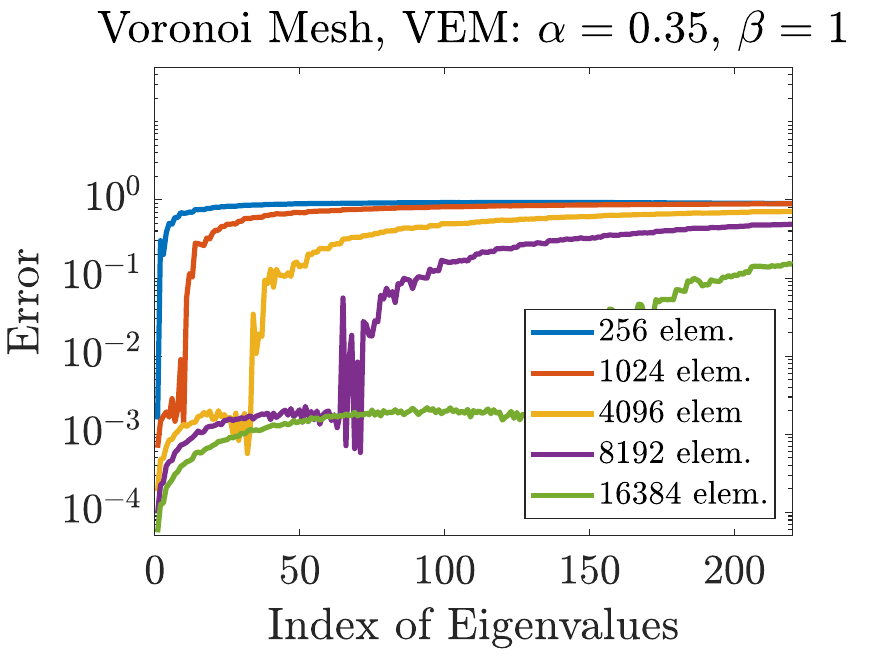}
		\includegraphics[width=0.325\linewidth]{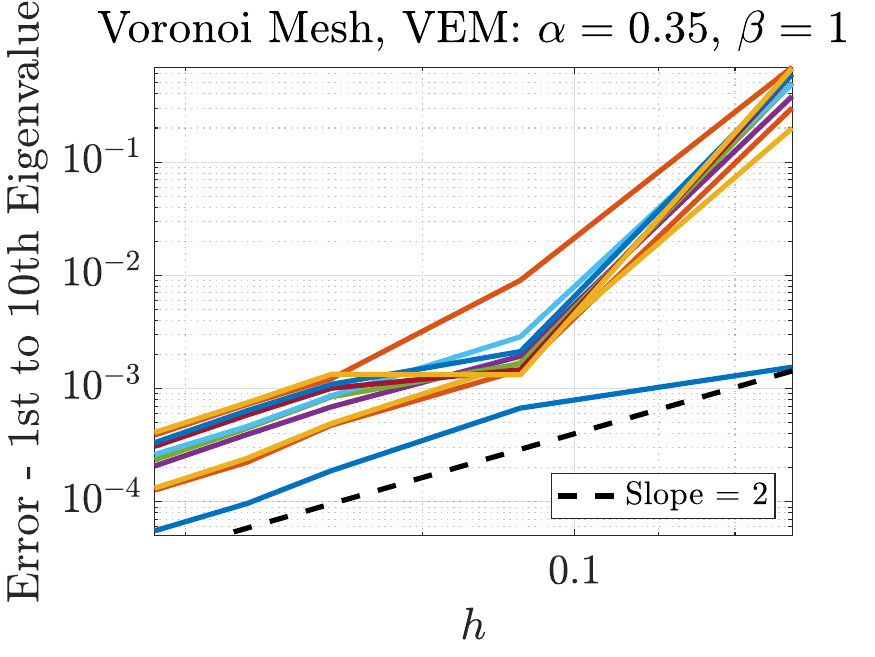}
		
		\
		
		\includegraphics[width=0.325\linewidth]{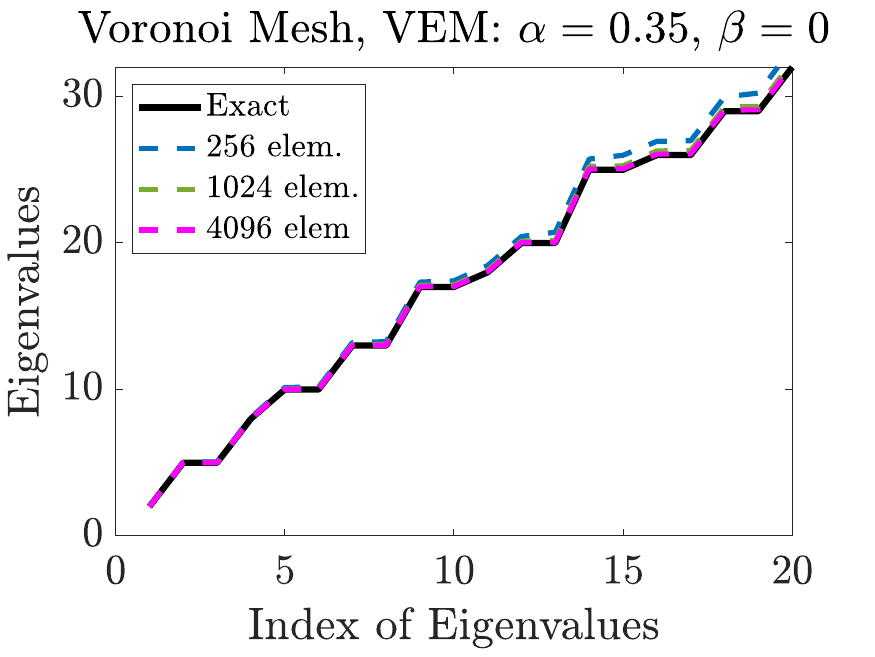}
		\includegraphics[width=0.325\linewidth]{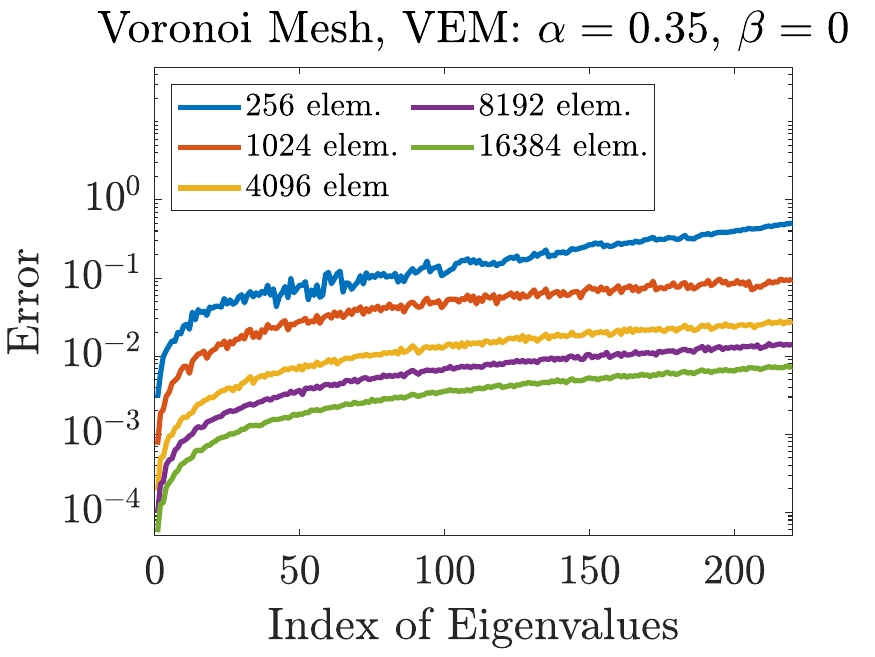}
		\includegraphics[width=0.325\linewidth]{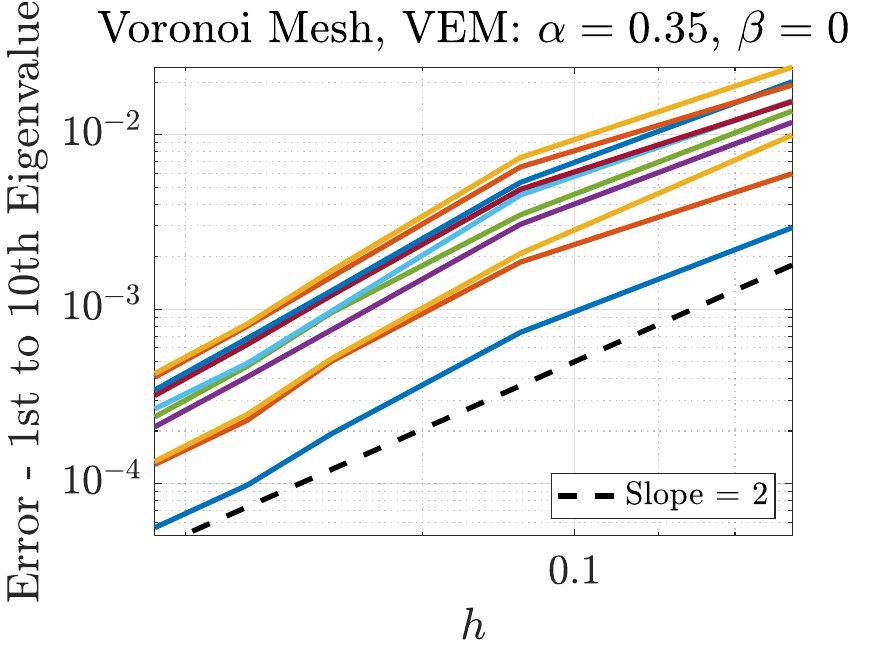}
		
		\
		
		\includegraphics[width=0.325\linewidth]{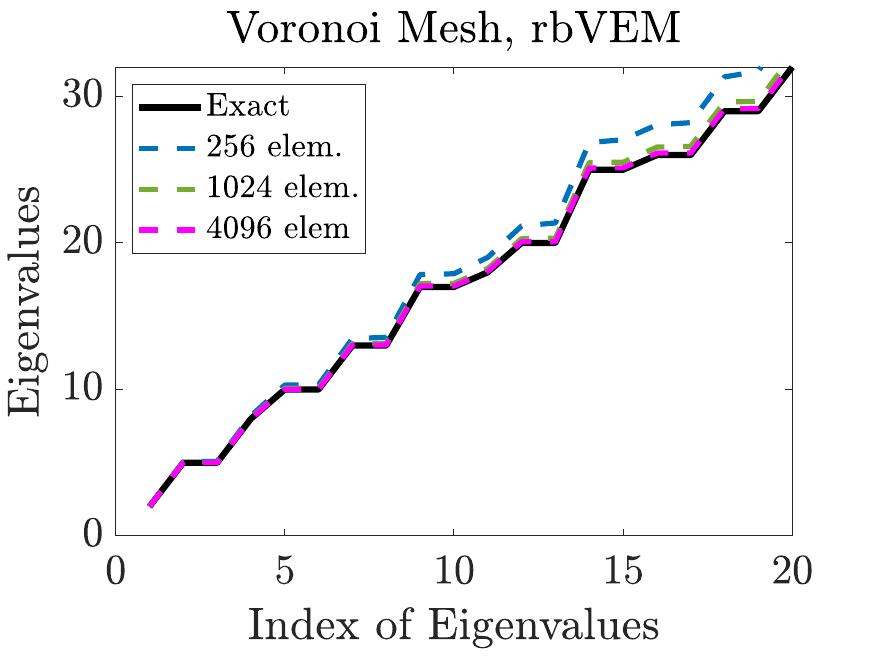}
		\includegraphics[width=0.325\linewidth]{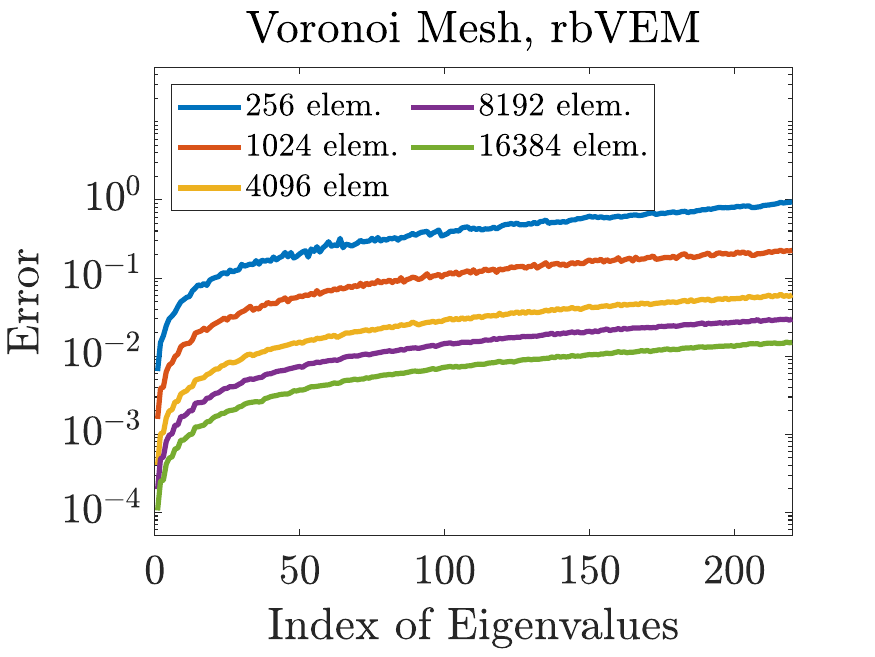}
		\includegraphics[width=0.325\linewidth]{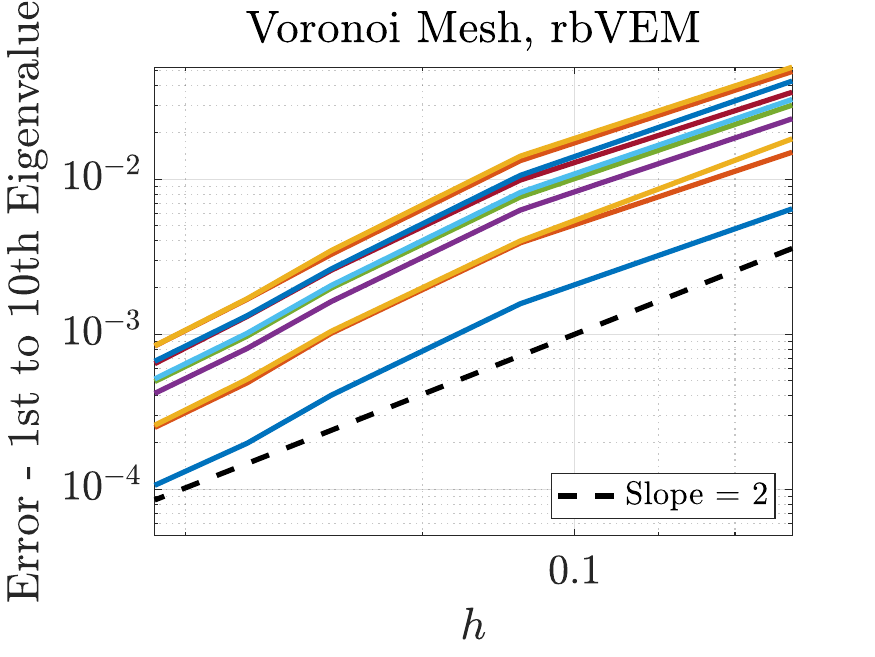}
		
		\
		
		\includegraphics[width=0.325\linewidth]{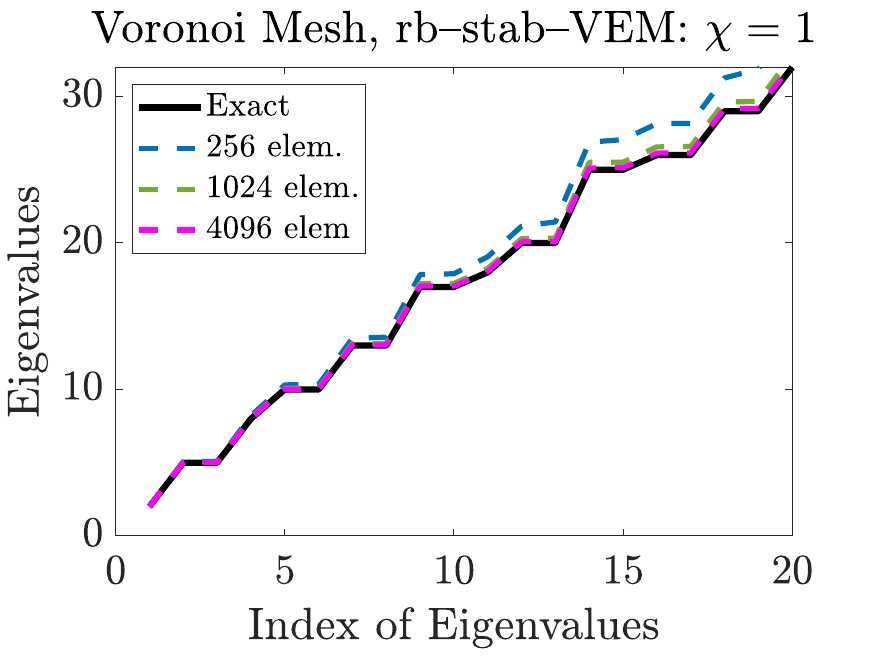}
		\includegraphics[width=0.325\linewidth]{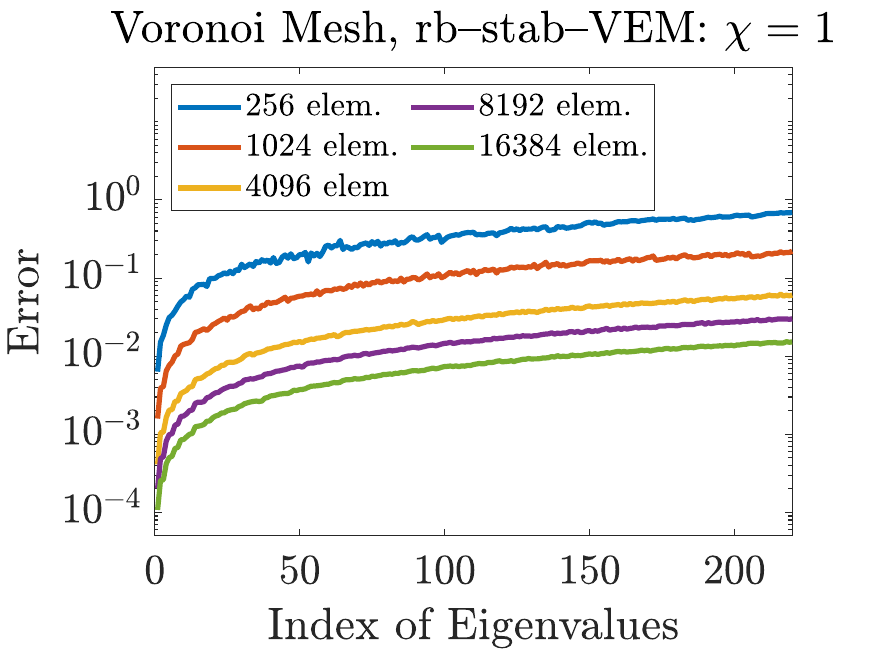}
		\includegraphics[width=0.325\linewidth]{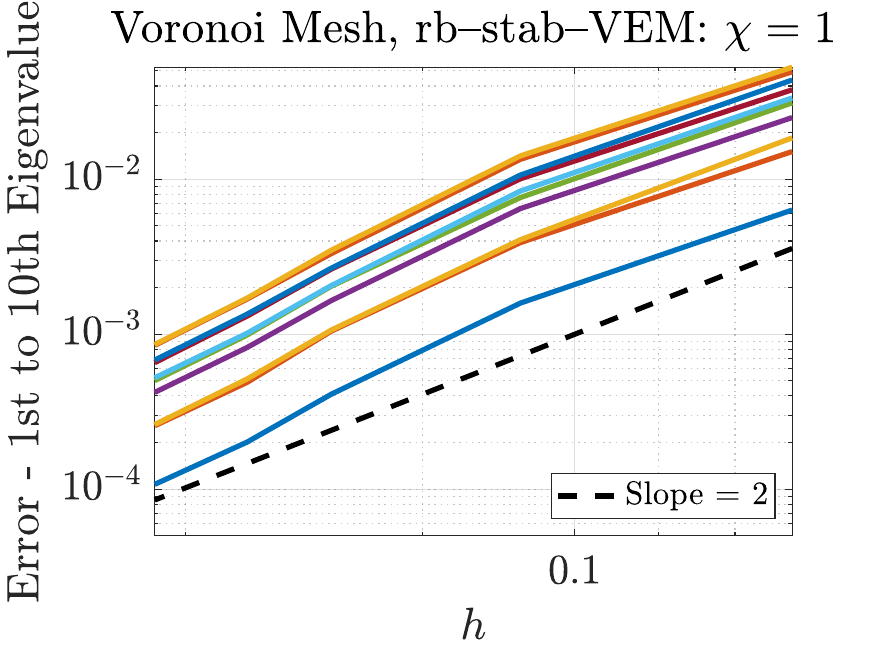}
		
		\
		
		\includegraphics[width=0.325\linewidth]{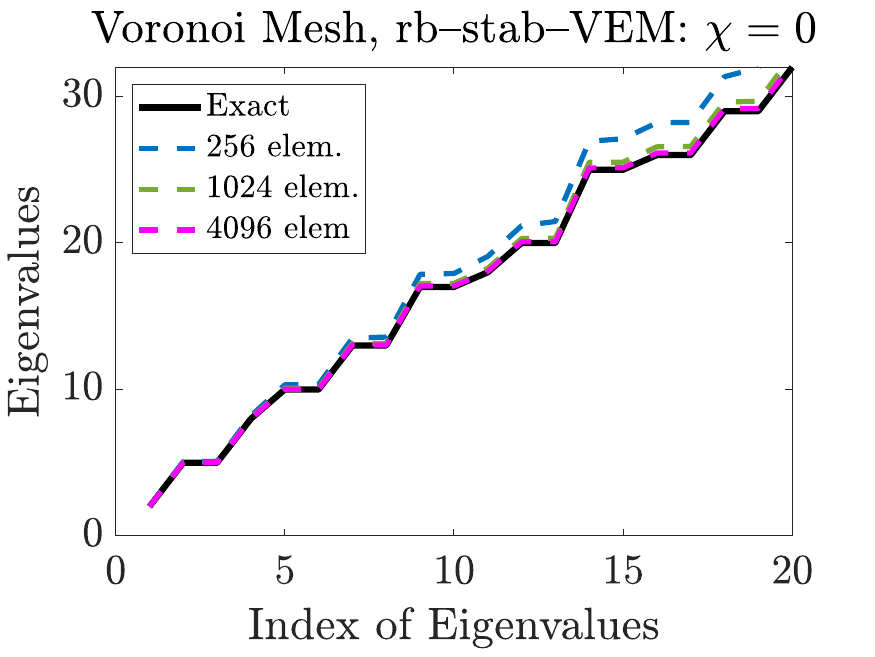}
		\includegraphics[width=0.325\linewidth]{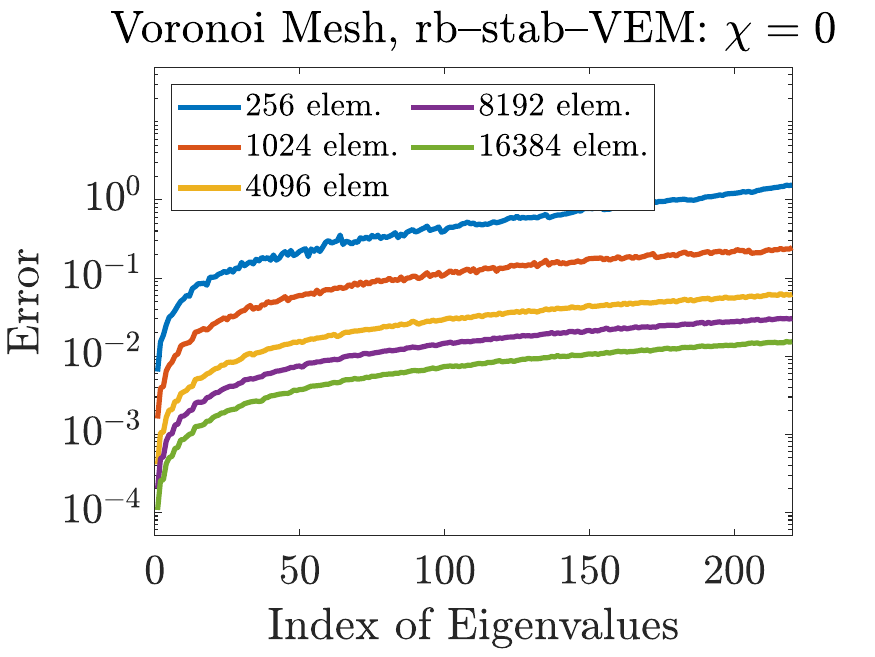}
		\includegraphics[width=0.325\linewidth]{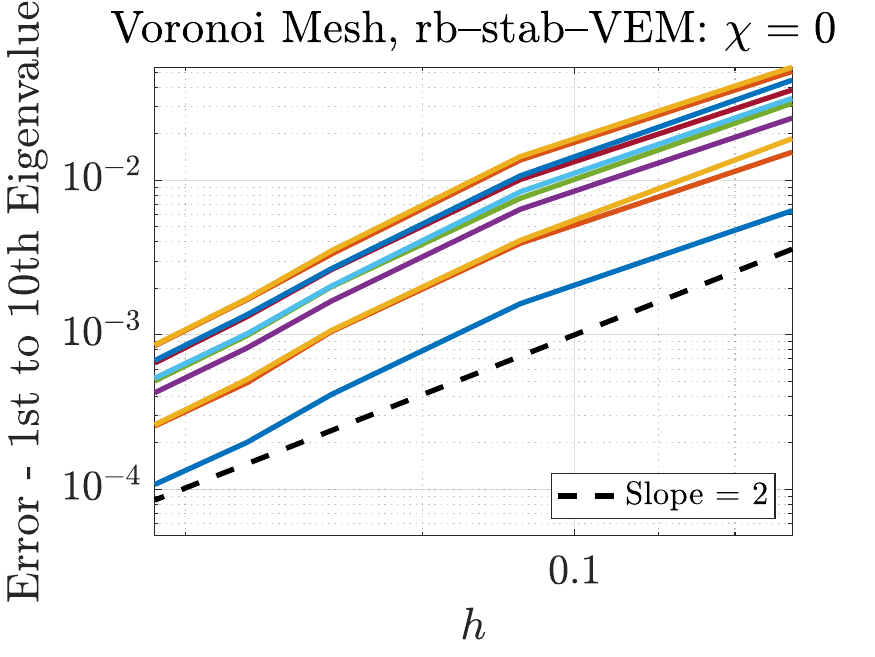}
		
		\caption{Comparison between VEM (first and second line) \textsf{rb}VEM (third line) and rb--stab--VEM (fourth and fifth line) on a sequence of Voronoi meshes. First column: plots of the first 20 computed and exact eigenvalues. Second column: error plots for the first 220 eigenvalues with respect to their index. Third column: error plots for the first 10 eigenvalues when refining the mesh size.}
		\label{fig:voronoi}
	\end{figure}
	
	\begin{table}
		\centering
		\renewcommand{\arraystretch}{1.5}
		\resizebox{\textwidth}{!}{
			\begin{tabular}{ccccccccccc}
				\multicolumn{11}{c}{\Large{\textbf{Eigenvalues on Voronoi Meshes - VEM $\alpha=0.35$, $\beta=1$}}}\\
				\hline
				\hline
				\textbf{Index of Eigenvalue} & \textbf{1} & \textbf{2} & \textbf{3} & \textbf{4} & \textbf{5} & &&&&\\
				256 Elements &2.0031&\framebox[1.1\width]{3.4950}&\framebox[1.1\width]{4.0009}&4.9486&4.9883&$\cdots$&&&&\\
				\hline
				\hline
				\textbf{Index of Eigenvalue} &  & \textbf{8} & \textbf{9} & \textbf{10} & \textbf{11} & \textbf{12} & \textbf{13} & \textbf{14} & \textbf{15} &\\
				1024 Elements & $\cdots$ & 13.0276 & \framebox[1.1\width]{16.8463} &  16.9775 & 17.0105	 & \framebox[1.1\width]{17.7225} & 17.9448	& \framebox[1.1\width]{18.0133} & 18.0961 & $\cdots$ \\
				\hline
				\hline
				\textbf{Index of Eigenvalue} &  & \textbf{30} & \textbf{31} & \textbf{32} & \textbf{33} & \textbf{34} & \textbf{35} & \textbf{36} & \textbf{37} &\\
				4096 Elements & $\cdots$ & 45.0668 & \framebox[1.1\width]{49.9084} &  50.0284 & 50.0628	 & \framebox[1.1\width]{50.2086} & \framebox[1.1\width]{51.4368}	& 51.9794 & 52.0531 & $\cdots$ \\
				\hline
				\hline
		\end{tabular}}
		\caption{Eigenvalues computed with Stabilized VEM on Voronoi meshes. Spurious eigenvalues are reported into boxes.}
		\label{tab:spurious_eig_voronoi}
	\end{table}\renewcommand{\arraystretch}{1}
	
	As discussed in Section~\ref{sec:vem1}, the term $b(\PiZero u_h,\PiZero v_h)$ may have a kernel, and this is the interesting case when considering VEM ($\beta=0$) or rb--stab--VEM ($\chi=0$) in the spirit of~\cite{BGG25}. Nevertheless, on the sequence of Voronoi meshes we considered for our tests, the non-stabilized mass term has full rank. Thus, we repeat the same comparison on a sequence of dyadic meshes, giving, for $\beta,\chi=0$, singular mass matrices. In this test, the stability parameters for standard VEM are set to $\alpha=\beta=1$. The associated results are collected in Figure~\ref{fig:dyadic}: we observe the same behavior as for the Voronoi case, so that we can draw the same conclusions.
	
	\begin{figure}
		
		\includegraphics[width=0.325\linewidth]{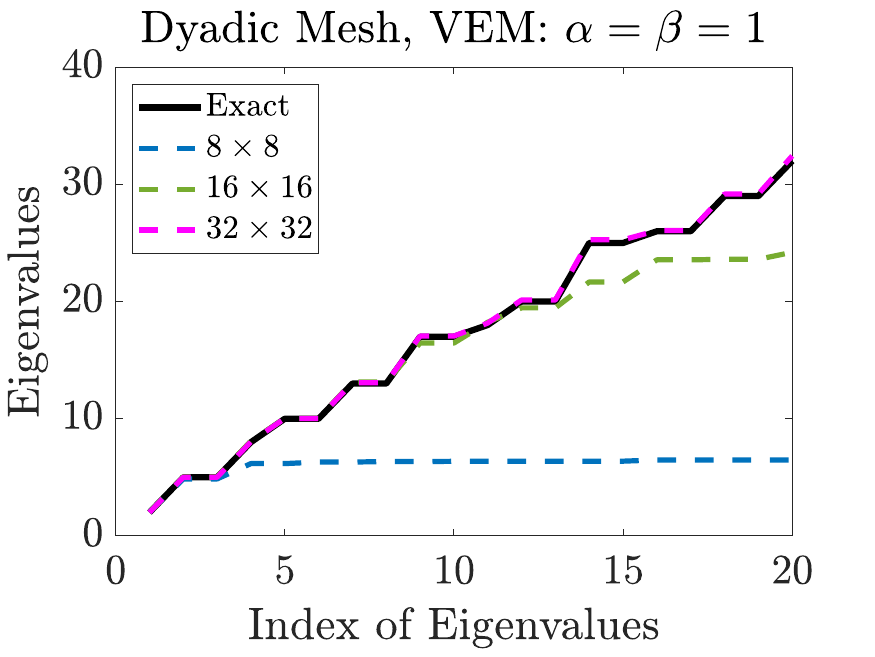}
		\includegraphics[width=0.325\linewidth]{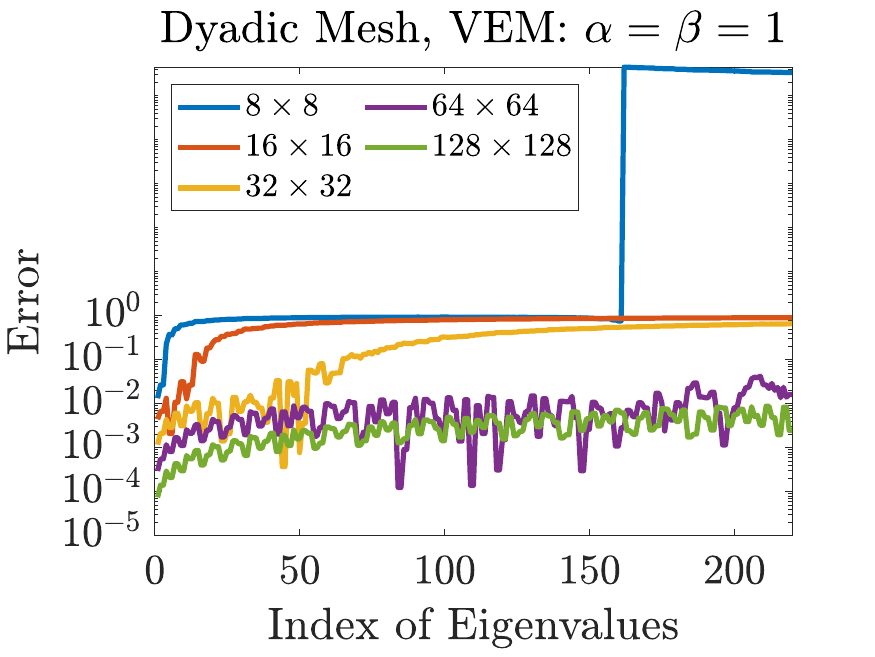}
		\includegraphics[width=0.325\linewidth]{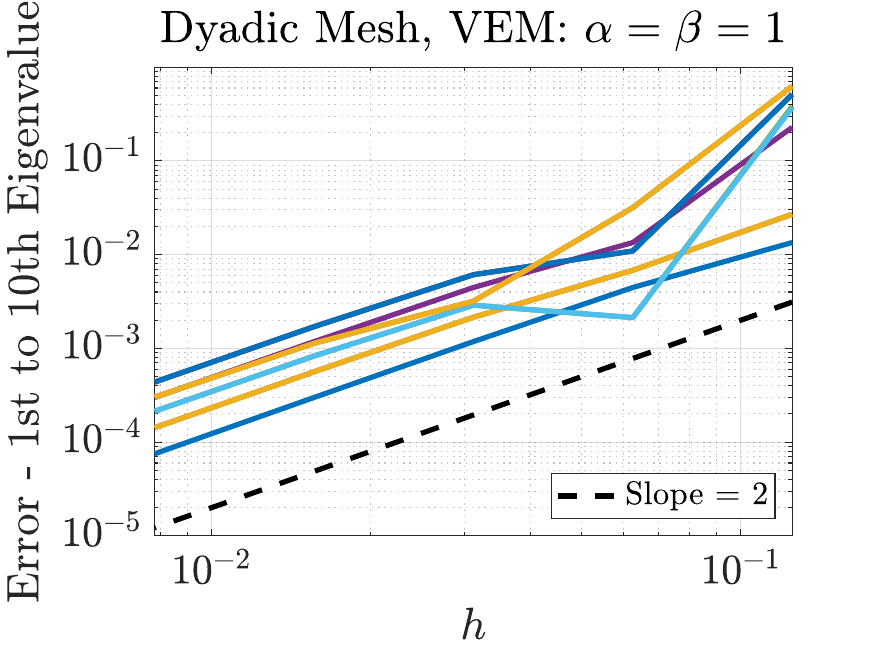}
		
		\

		\includegraphics[width=0.325\linewidth]{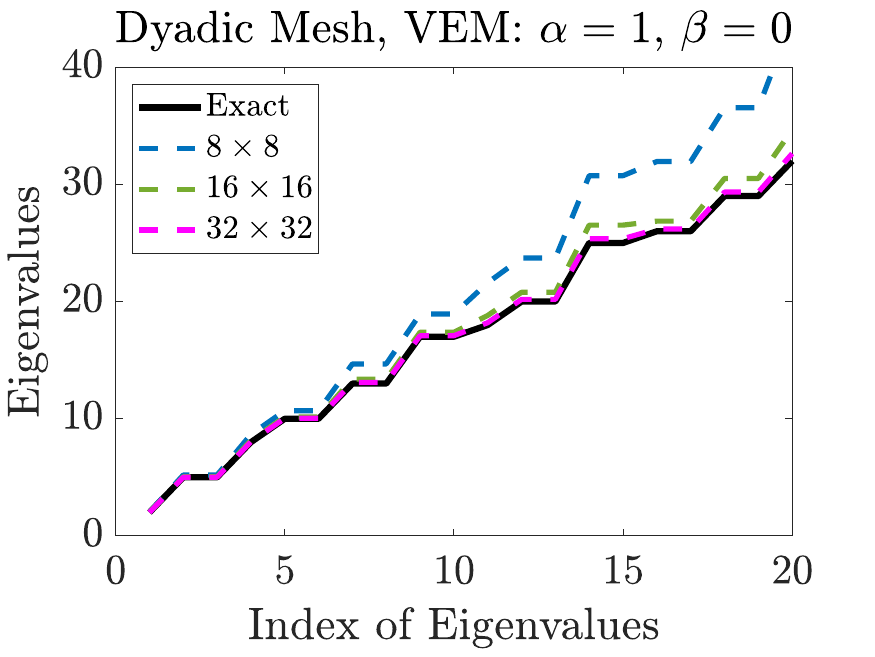}
		\includegraphics[width=0.325\linewidth]{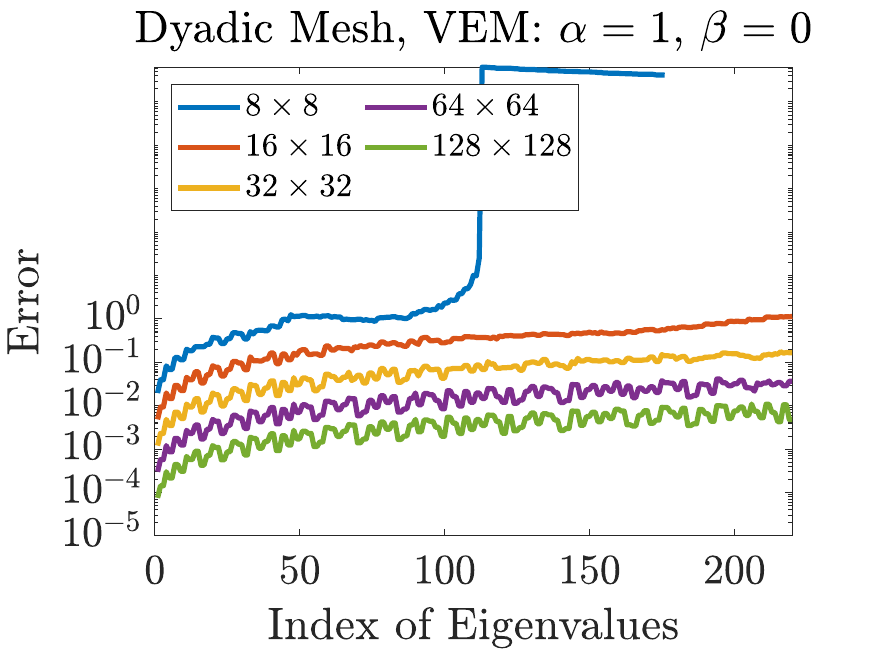}
		\includegraphics[width=0.325\linewidth]{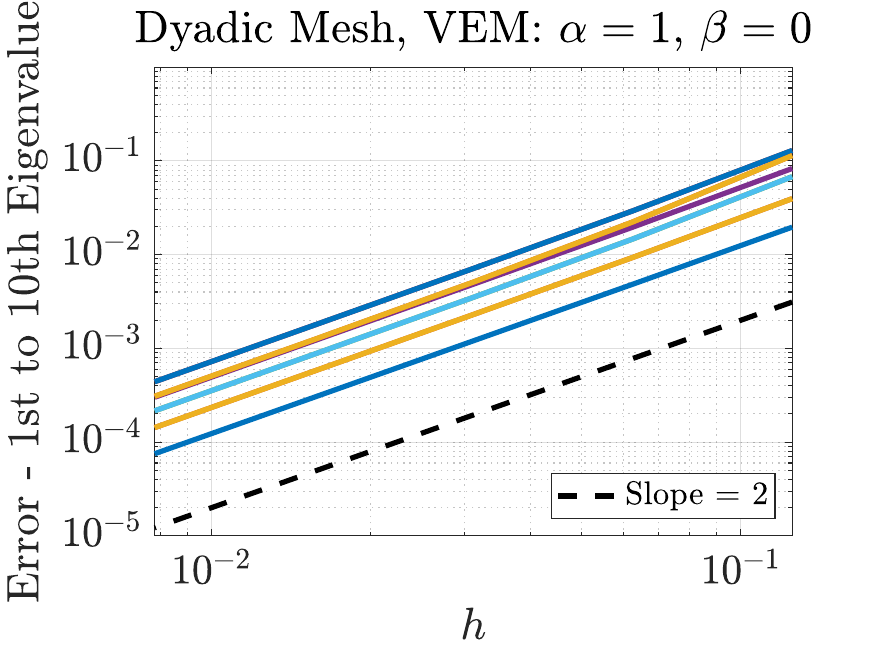}
		
		\
		
		\includegraphics[width=0.325\linewidth]{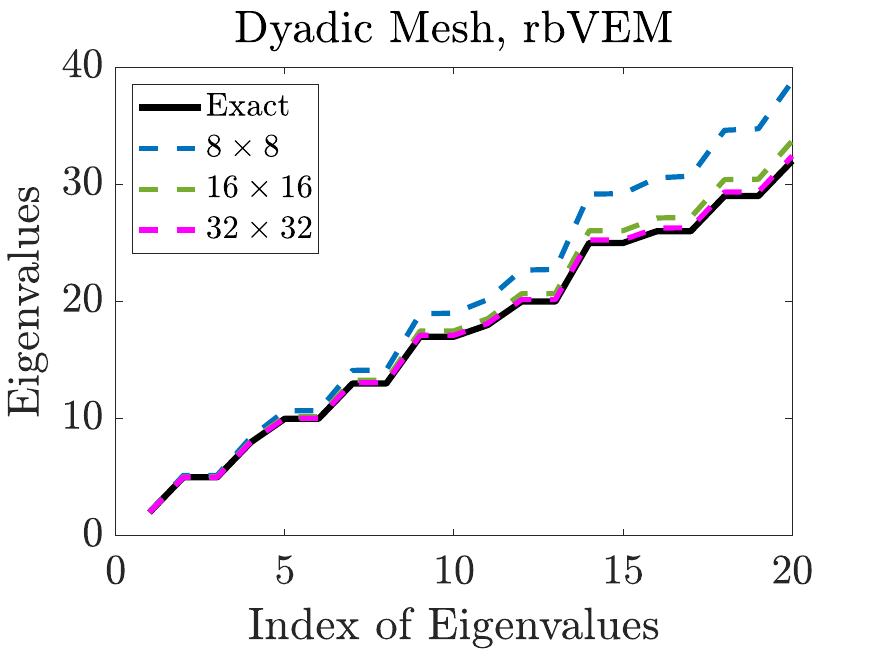}
		\includegraphics[width=0.325\linewidth]{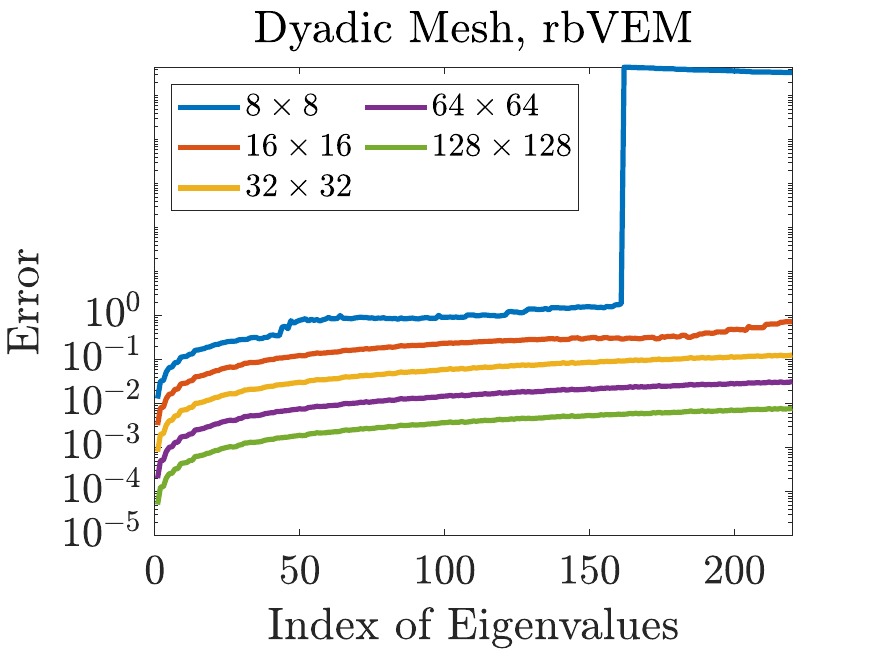}
		\includegraphics[width=0.325\linewidth]{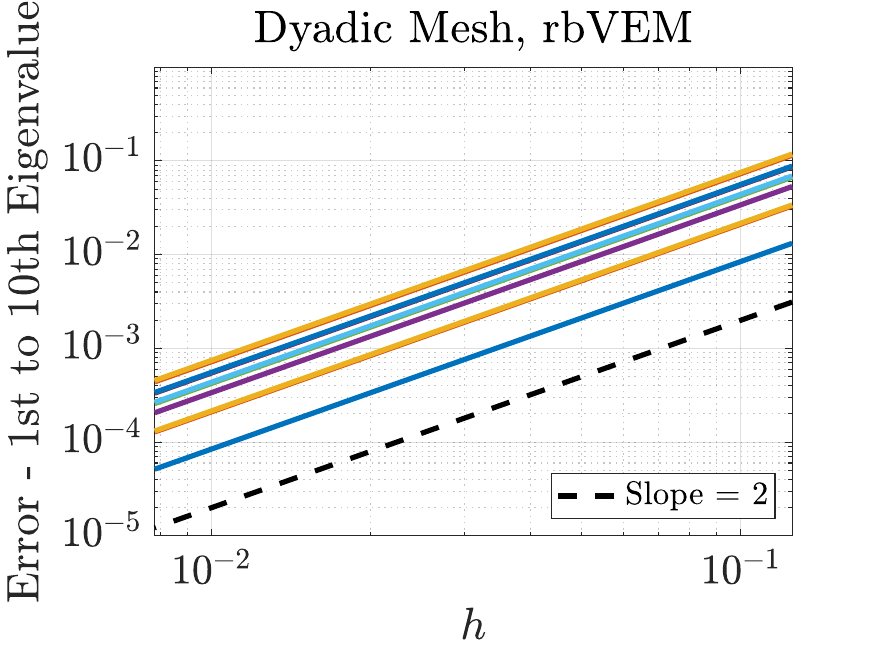}
		
		\
		
		\includegraphics[width=0.325\linewidth]{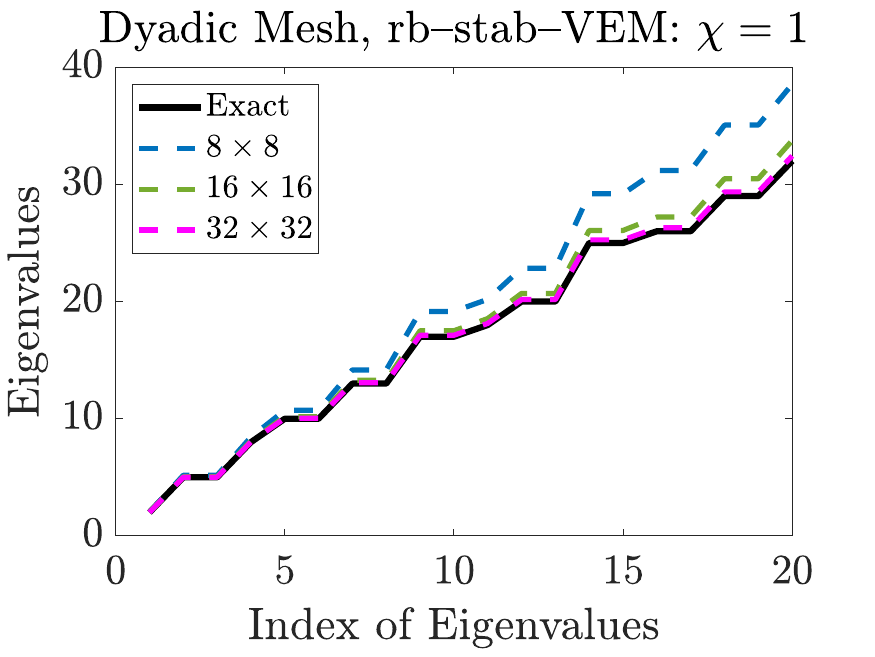}
		\includegraphics[width=0.325\linewidth]{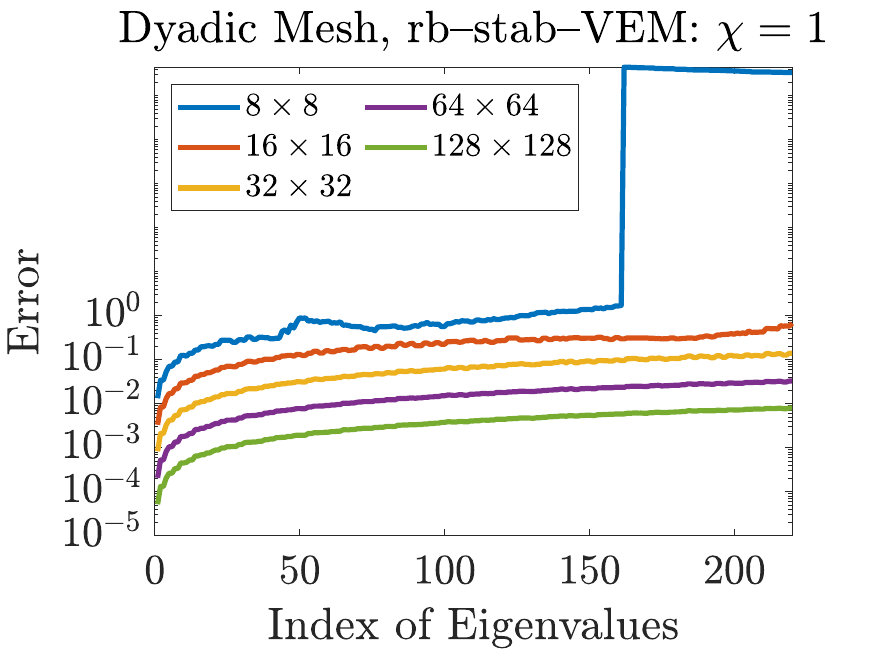}
		\includegraphics[width=0.325\linewidth]{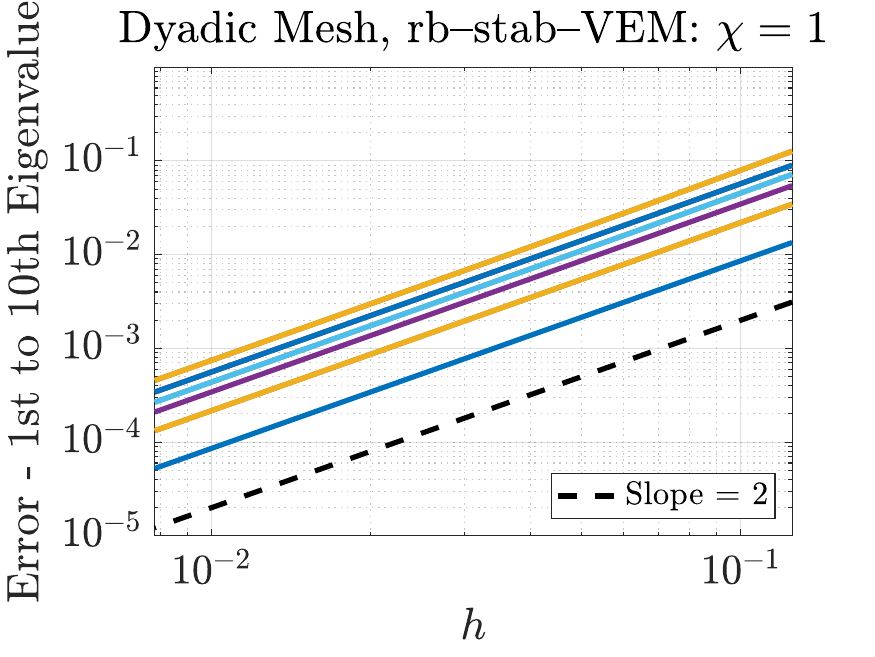}
		
		\
		
		\includegraphics[width=0.325\linewidth]{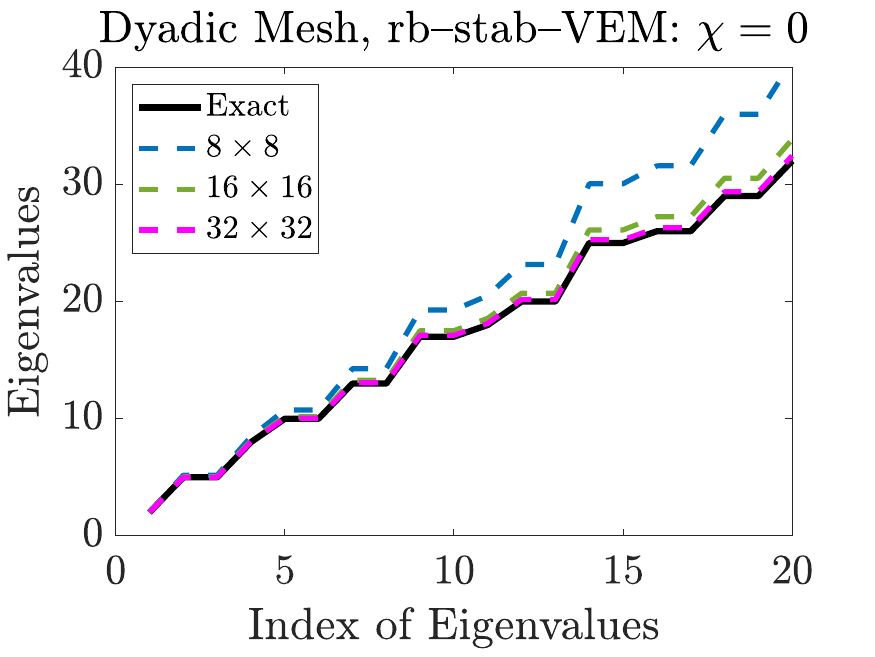}
		\includegraphics[width=0.325\linewidth]{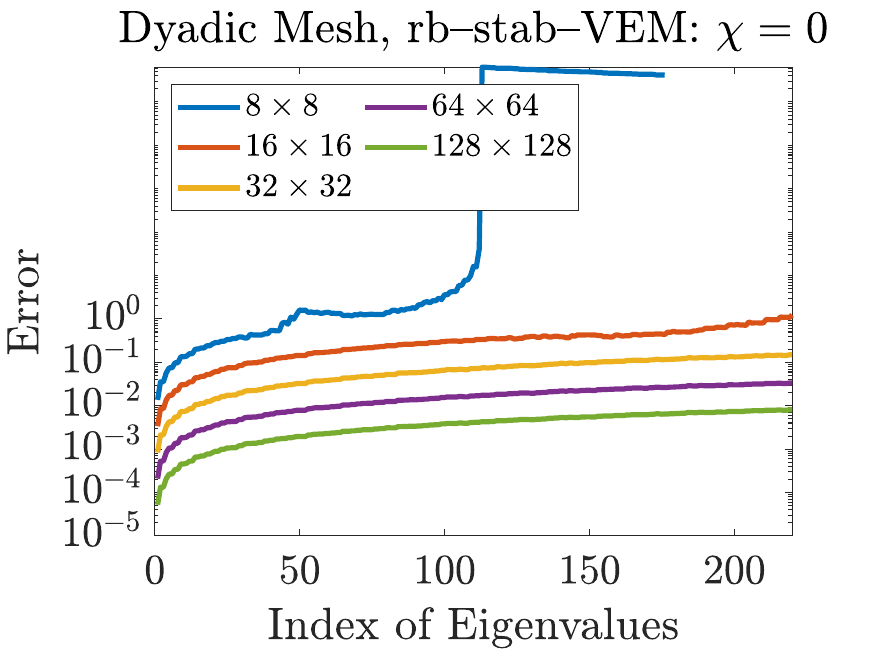}
		\includegraphics[width=0.325\linewidth]{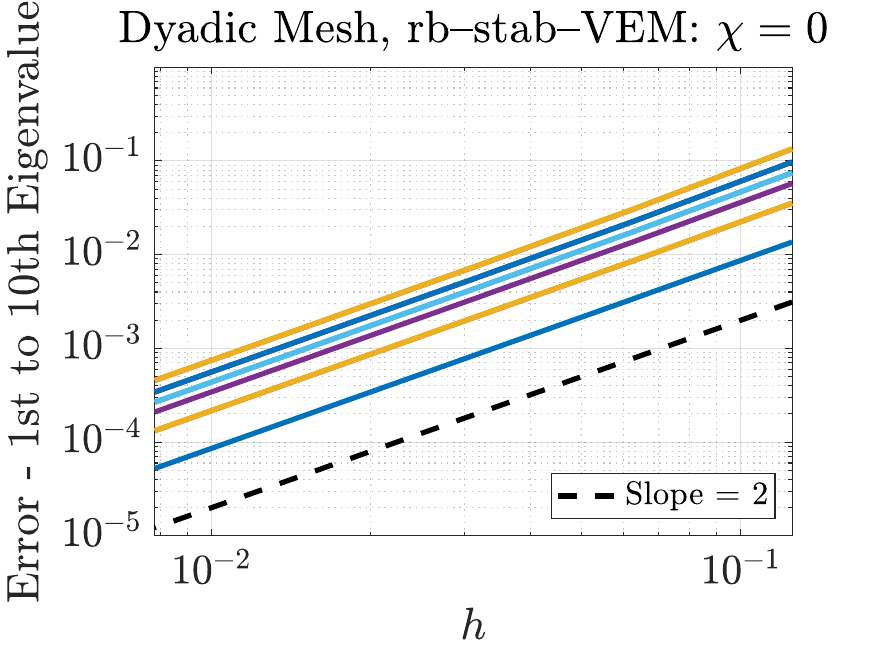}
		
		\caption{Comparison between VEM (first and second line) \textsf{rb}VEM (third line) and rb--stab--VEM (fourth and fifth line) on a sequence of dyadic meshes. First column: plots of the first 20 computed and exact eigenvalues. Second column: error plots for the first 220 eigenvalues with respect to their index. Third column: error plots for the first 10 eigenvalues when refining the mesh size.}
		\label{fig:dyadic}
	\end{figure}
	
	\subsection{Laplace eigenproblem on the L-shaped domain}
	
	We consider the benchmark test proposed in~\cite{dauge}. In this case, we solve the Laplace eigenproblem with homogeneous Neumann boundary conditions on the L-shaped domain $\Omega=(-1,1)^2 \setminus \Omega_0$, with $\Omega_0=(0,1)\times(-1,0)$, which is depicted in Figure~\ref{fig:ldomain}. Since an analytical expression for the exact eigenvalues is not available, our reference solutions are the first five eigenvalues computed on a very fine triangulation as given in~\cite{dauge}, i.e.
	\begin{equation*}
		\begin{aligned}
			&\lambda_1 = \text{0.147562182408\,E+01}\\
			&\lambda_2 = \text{0.353403136678\,E+01}\\
			&\lambda_3 = \text{0.986960440109\,E+01}\\
			&\lambda_4 = \text{0.986960440109\,E+01}\\
			&\lambda_5 = \text{0.113894793979\,E+02.}
		\end{aligned}
	\end{equation*}
	
	It is well-known that the eigenfunction $u_1$ associated with $\lambda_1$ is singular due to the re-entrant corner so that $u_1\in\Hr{1+s}{\Omega}$ for all $s<2/3$. Consequently, we expect a convergence rate of $4/3$. We point out that the eigenfunctions associated with $\lambda_2,\dots,\lambda_5$ are smooth, therefore the expected convergence rate of such eigenvalues is $2$.
	
	We report the convergence plots in Figure~\ref{fig:lconv}. More precisely, we analyze once again the stabilized VEM ($\alpha=\beta=1$), the partially stabilized VEM ($\alpha=1$, $\beta=0$), the novel \textsf{rb}VEM and the {rb--stab--VEM} ($\chi=0,1$). The four methods share the same convergence history; in particular, the \textsf{rb}VEM and the rb--stab--VEM are optimal even for singular eigenfunctions.
	
	\begin{figure}
		\centering
		\includegraphics[width=0.4\linewidth]{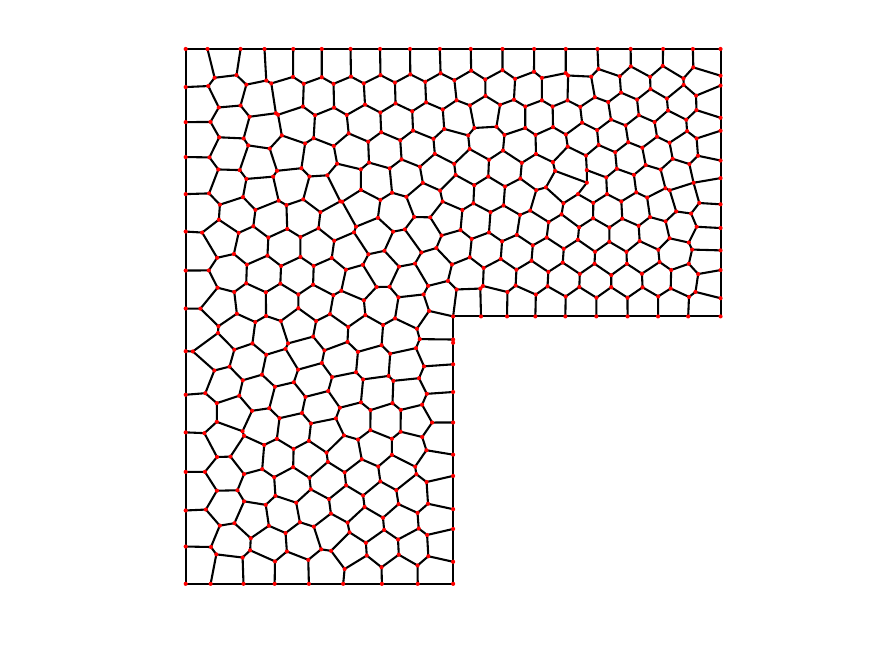}
		\caption{A Voronoi mesh for the L-shaped domain $\Omega=(-1,1)^2 \setminus \Omega_0$ with $\Omega_0=(0,1)\times(-1,0)$.}
		\label{fig:ldomain}
	\end{figure}
	
	\begin{figure}
		\centering
		
		\includegraphics[width=0.35\linewidth]{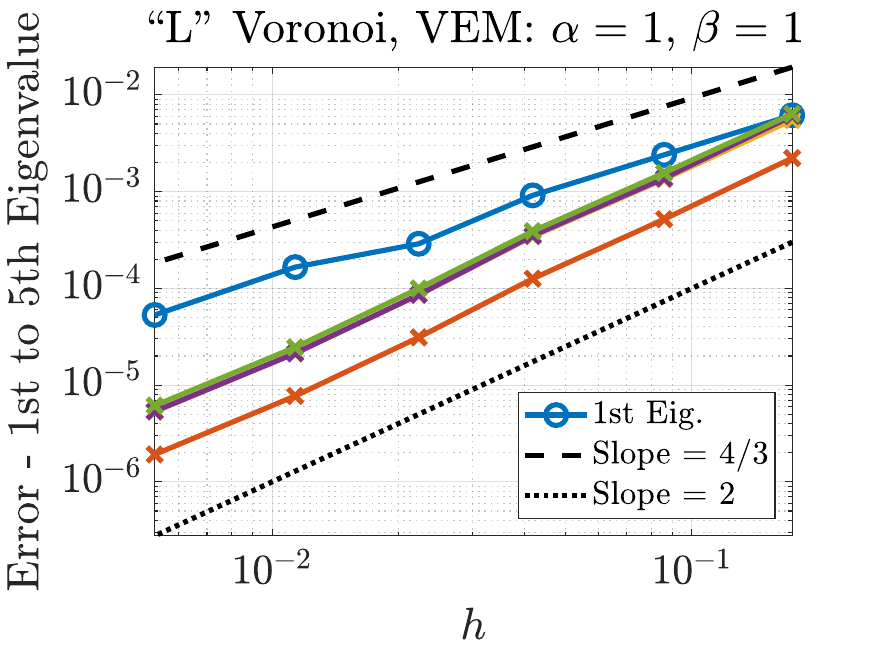}\qquad
		\includegraphics[width=0.35\linewidth]{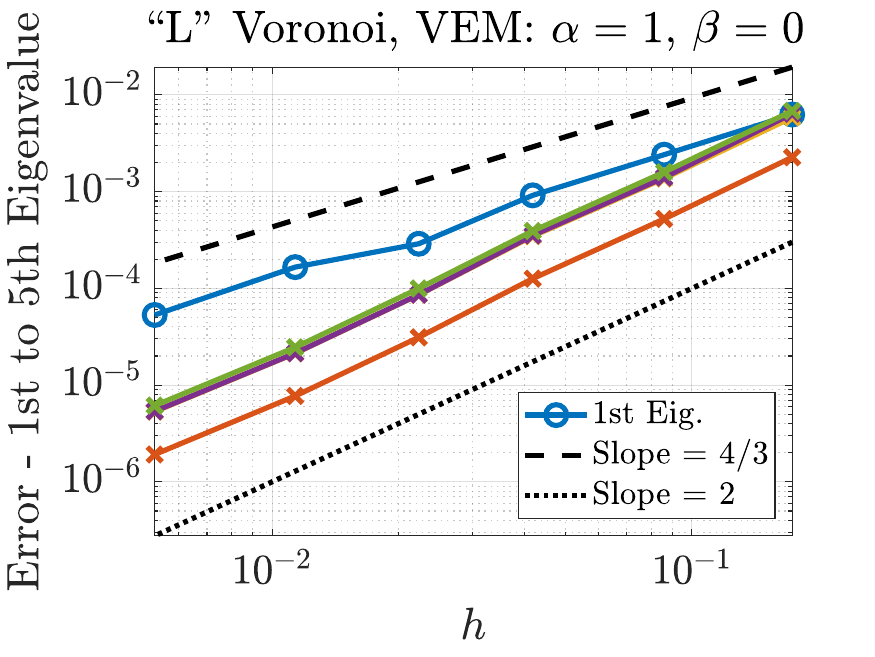}
		
		\
		
		\includegraphics[width=0.38\linewidth]{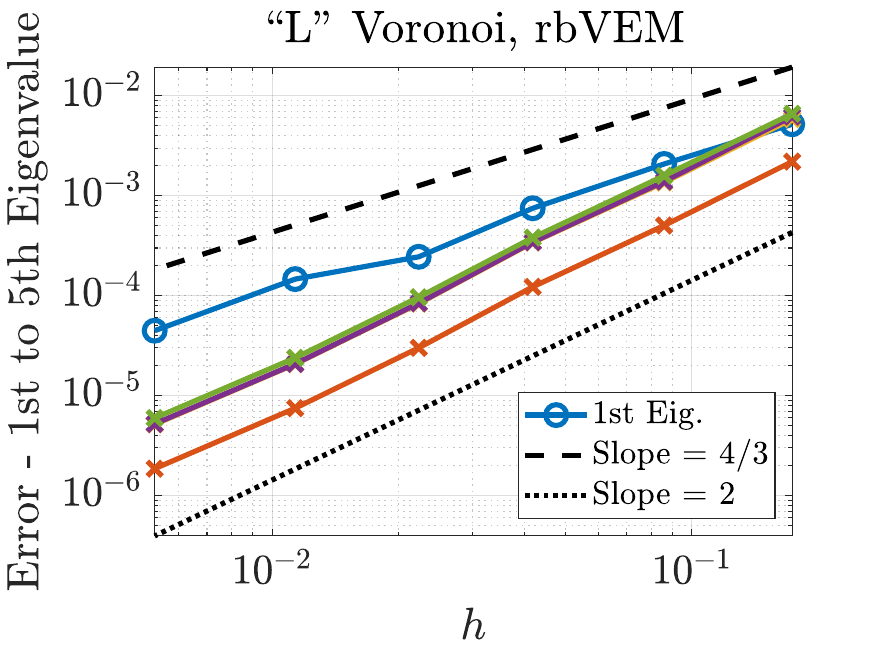}
		
		\
		
		\includegraphics[width=0.35\linewidth]{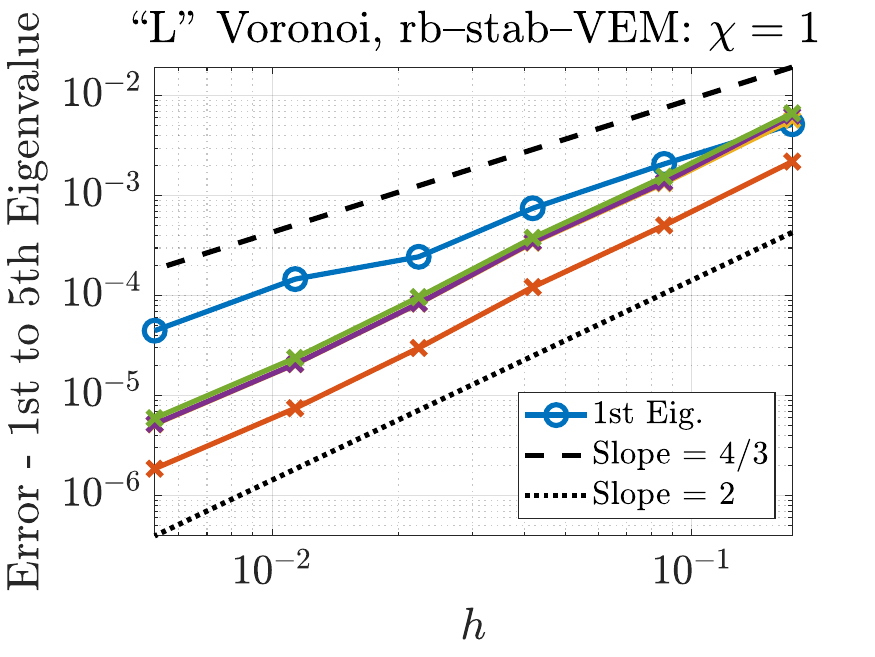}\qquad
		\includegraphics[width=0.35\linewidth]{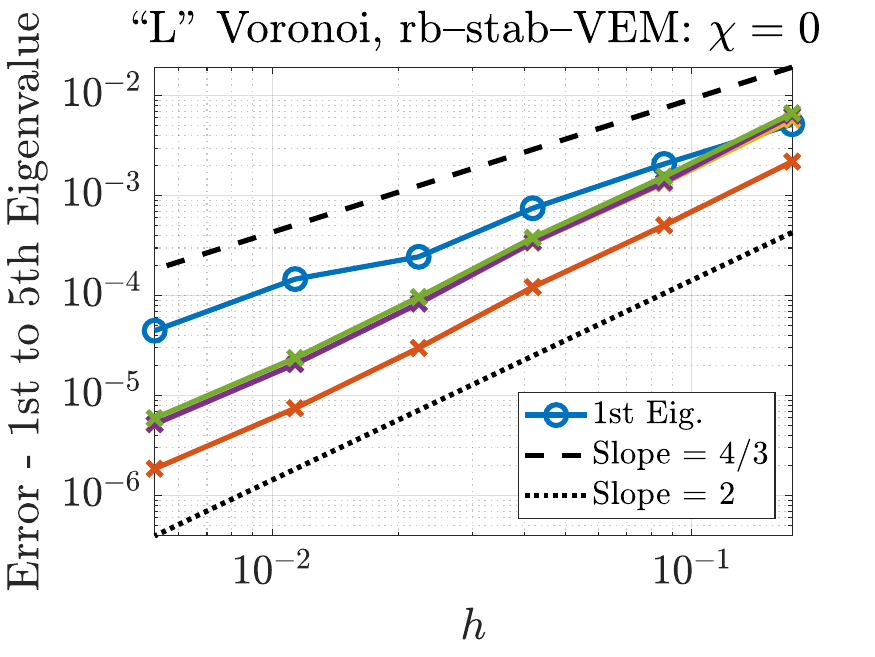}
		
		\caption{Convergence of eigenvalues with respect to $h$ on a sequence of Voronoi meshes for the L-shaped domain: VEM (first line), \textsf{rb}VEM (second line) and rb--stab--VEM (third line).}
		\label{fig:lconv}
		
	\end{figure}
	
	\subsection{Second order problem with piecewise constant coefficient}
	
	We consider another benchmark test from~\cite{dauge}: we seek for the eigenvalues of a second order problem with piecewise constant diffusivity \blue$\K$ \black and homogeneous Neumann boundary conditions. \blue The continuous bilinear form $a$ takes now the form
	$$
	a(u,v)=(\K\grad u,\grad v)_\Omega.
	$$
	The discrete bilinear form $a_h$ within the stabilized VEM setting is defined analogously to~\eqref{eq:discrete_bils}, see~\cite{beirao2016virtual}. On the other hand, all quantities are computed exactly in the case of~\textsf{rb}VEM.\black
	
	Let $\Omega=(-1,1)^2$ be decomposed into two regions $\Omega_1$ and $\Omega_\delta$ with diffusivity $\K=\eye$ and $\K=\delta\eye$, respectively, as depicted in Figure~\ref{fig:K_domain}. We discretize the domain by a family of Voronoi meshes and we consider four different values of $\delta$, namely $\delta=2,\,10,\,100,\,10^8$. We then compute the first two eigenvalues, whose reference solution computed on a very fine triangulation with high order finite elements is given by~\cite{dauge}
	\begin{equation*}
		\begin{aligned}
			&\delta=2: &&\qquad\delta=10: \\
			&\lambda_1=\text{0.3317548763415\,E+01},&&\qquad\lambda_1=\text{0.4533851871670\,E+01}, \\
			&\lambda_2=\text{0.3366324157260\,E+01}, &&\qquad\lambda_2=\text{0.6250332186603\,E+01}, \\[5pt]
			&\delta=100: &&\qquad\delta=10^8:\\
			&\lambda_1=0.\text{4893193324891\,E+01},&&\qquad\lambda_1=\text{0.4934802158785\,E+01}, \\
			&\lambda_2=0.\text{7206675422492\,E+01}, &&\qquad\lambda_2=\text{0.7225211232692\,E+01}.
		\end{aligned}
	\end{equation*}
	
	\begin{figure}[t]
		\centering
		\includegraphics[width=0.27\linewidth]{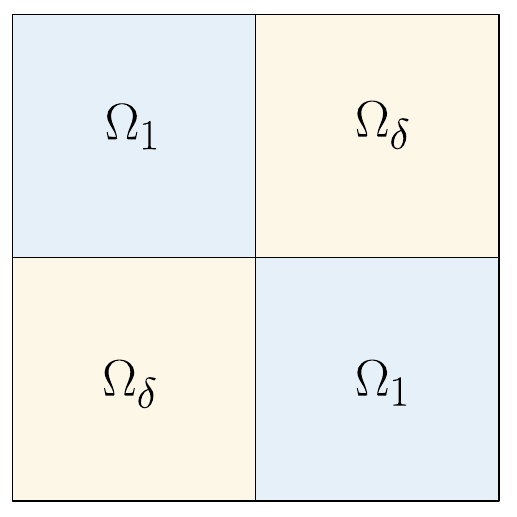}\qquad\qquad
		\includegraphics[width=0.27\linewidth]{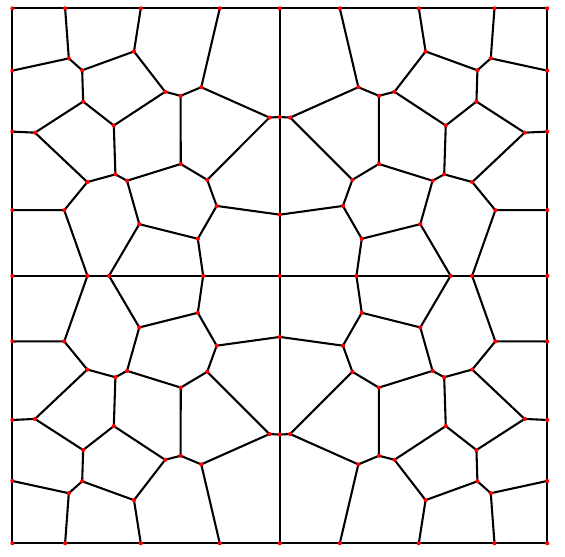}
		\caption{Subdivision of $\Omega$ into subdomains with different diffusivity and example of Voronoi mesh.}
		\label{fig:K_domain}
	\end{figure}
	
	Convergence plots of the first eigenvalue are reported in Figure~\ref{fig:K_1}, whereas those of the second eigenvalue are in Figure~\ref{fig:K_2}. More precisely, we compare \textsf{rb}VEM with the stabilized VEM ($\alpha=0.25,\,0.5,\,1$, $\beta=1$) and the {rb--stab--VEM} ($\chi=1$). We observe that \textsf{rb}VEM, {rb--stab--VEM} and VEM ($\alpha=\beta=1$) share the same optimal convergence history: all eigenvalues converge with rate two, except for the second one in correspondence of $\delta=10$, whose error decays with order 0.84, as already shown in~\cite{gardini2018}.  On the other hand, the stabilized VEM with $\alpha=0.25$ and $\alpha=0.5$ presents some oscillations, especially for the second eigenvalue in correspondence of $\delta=2,\,10$. 
	We finally point out that VEM and {rb--stab--VEM} have been also tested for $\beta=0$ and $\chi=0$, respectively: as in the previous test cases, the methods showed optimal rates of convergence and have not been plotted for the sake of readability.
	
	\begin{figure}[t]
		\centering
		\includegraphics[width=0.4\linewidth]{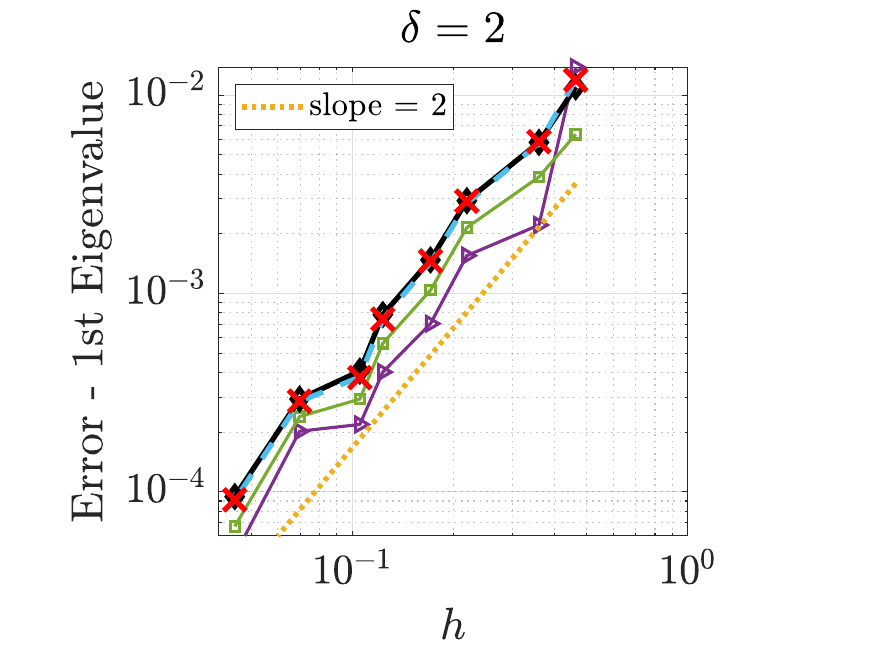}\quad
		\includegraphics[width=0.4\linewidth]{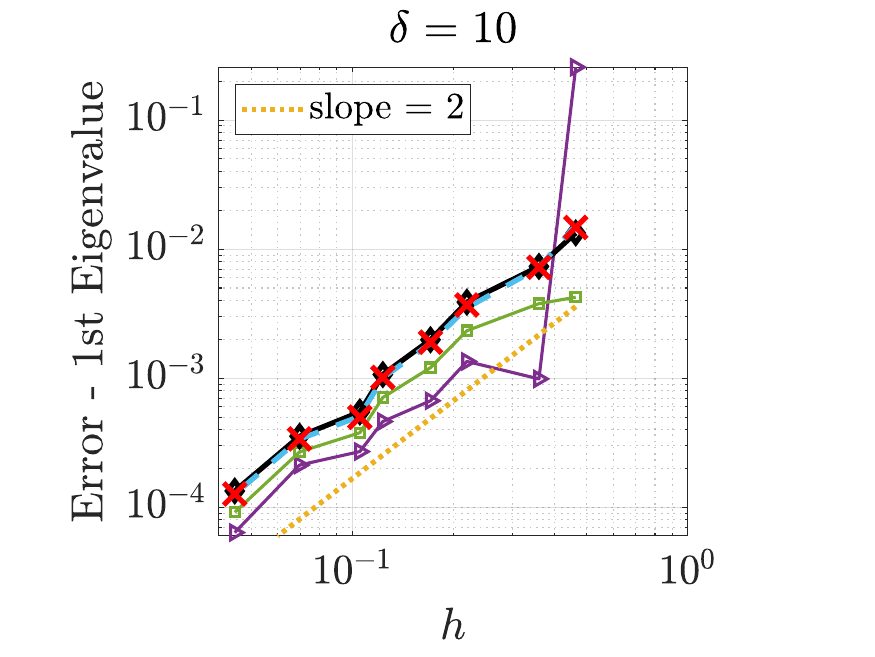}\quad
		
		\
		
		\includegraphics[width=0.4\linewidth]{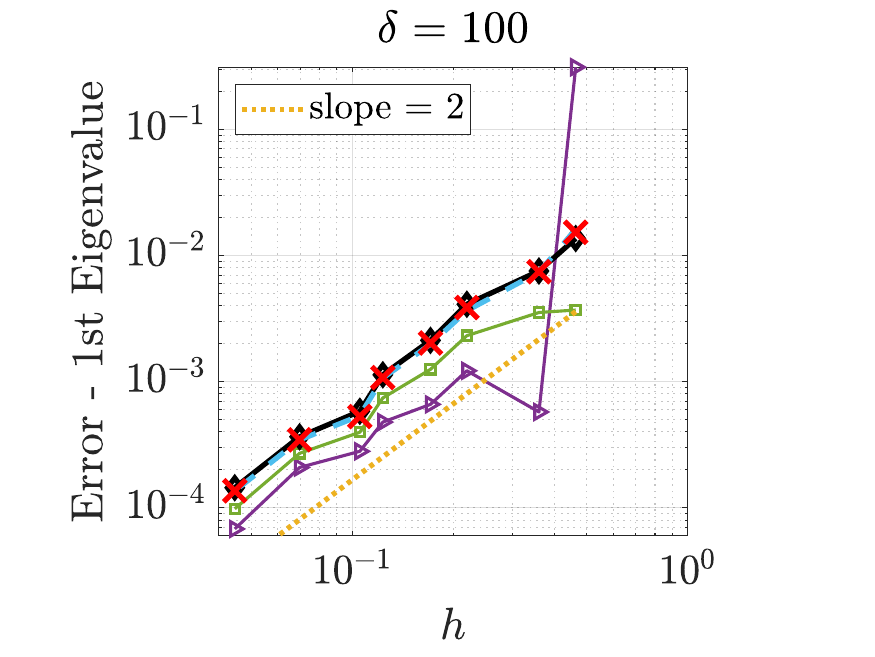}\quad
		\includegraphics[width=0.4\linewidth]{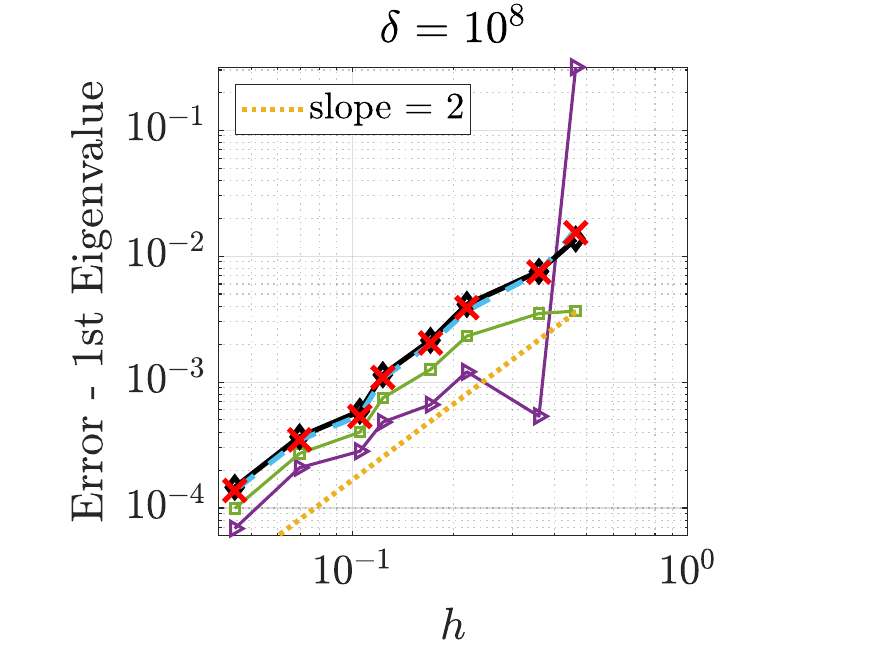}\quad
		
		\includegraphics[width=0.8\linewidth]{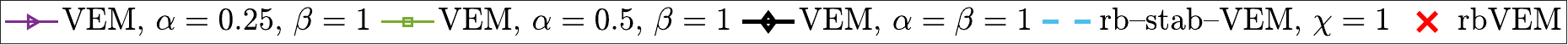}
		
		\caption{Convergence of the first eigenvalue for different values of diffusivity}
		\label{fig:K_1}
	\end{figure}
	
	\begin{figure}[t]
		\centering
		\includegraphics[width=0.4\linewidth]{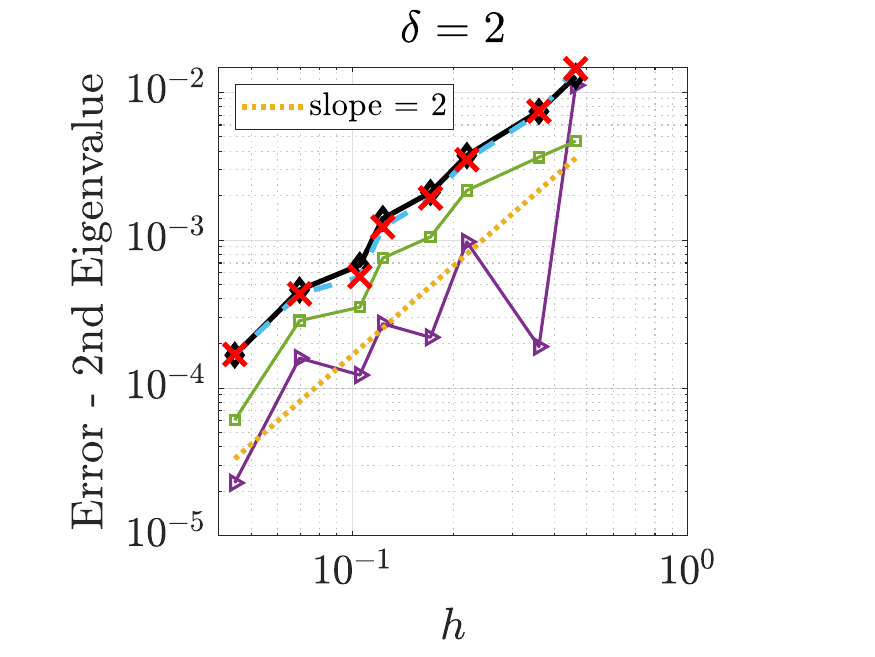}\quad
		\includegraphics[width=0.4\linewidth]{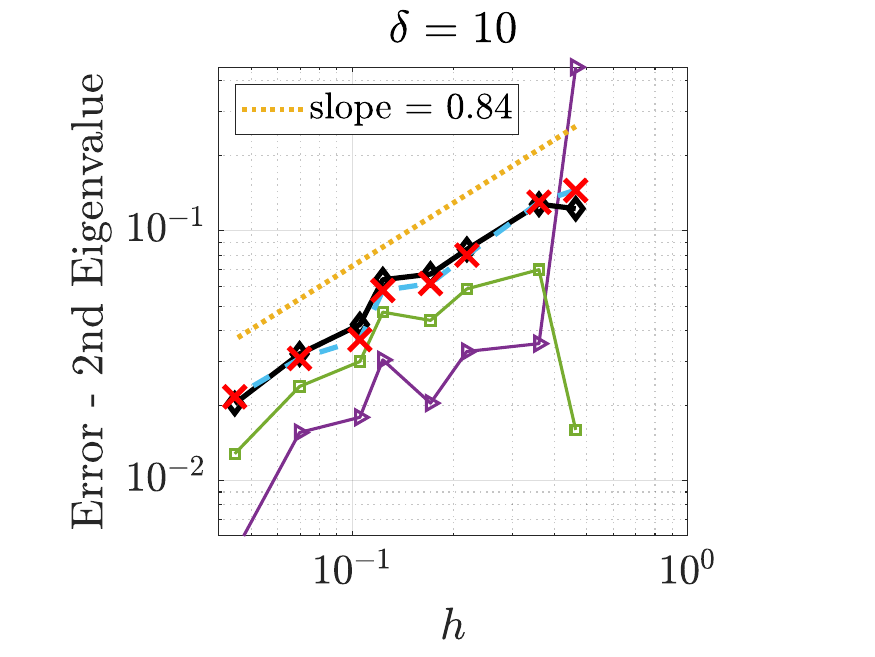}\quad
		
		\
		
		\includegraphics[width=0.4\linewidth]{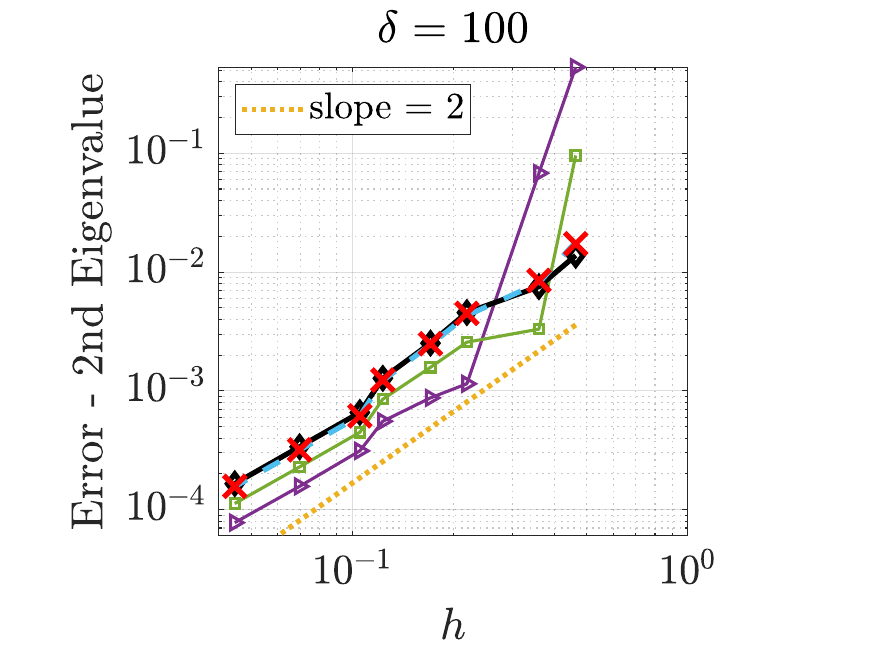}\quad
		\includegraphics[width=0.4\linewidth]{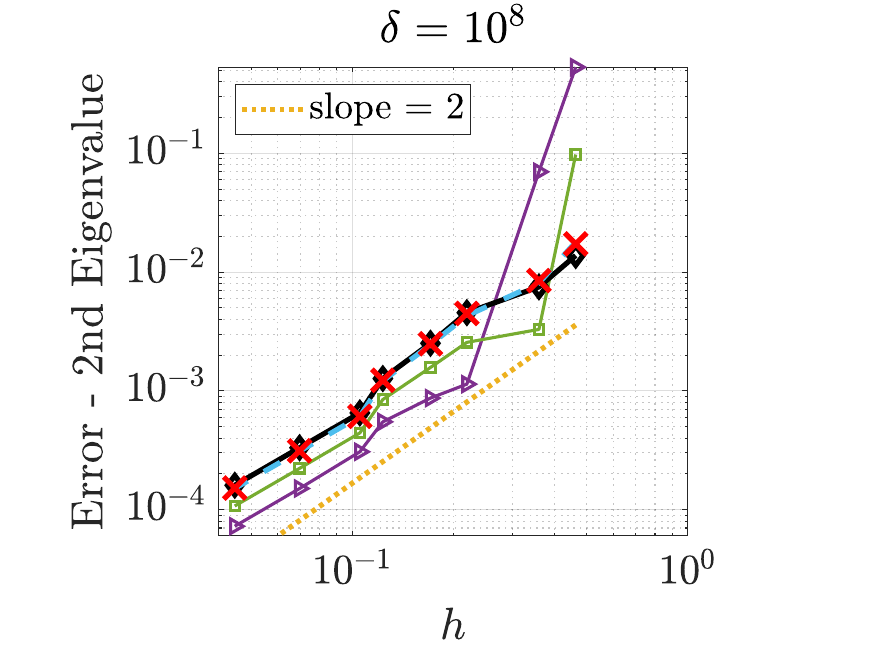}\quad
		
		\includegraphics[width=0.8\linewidth]{figures/K_coeff/K_legend}
		
		\caption{Convergence of the second eigenvalue for different values of diffusivity}
		\label{fig:K_2}
	\end{figure}
	
	\section{Conclusion}\label{sec:conclusions}
	
	We introduced the \rv lowest-order \black stabilization free \textsf{rb}VEM for the approximation of eigenvalue problems. The presence of stabilization parameters in standard VEM formulations may produce spurious eigenmodes. In order to avoid the parameters dependence, we employed the reduced basis method to efficiently solve the elemental equations defining the virtual basis functions. Thus, we were able to design a new fully conforming discrete space based on the original VEM.
	
	We proved that the \textsf{rb}VEM discretization of the Poisson problem is optimal in both $H^1-$ and $L^2-$ norms. Moreover, we established the correct spectral approximation of the Laplace eigenvalue problem, with optimal convergence rate of the eigenfuctions and the usual doubled rate for the eigenvalues. A wide range of numerical tests confirmed our theoretical findings.
	
	In our tests we also studied the behavior of alternative formulations \emph{\`a la VEM}, i.e. the rb--stab--VEM, where the reduced basis technique is employed to design the stability terms. This scheme performed equivalently to our \textsf{rb}VEM, which is covered by the theory. This behavior was expected since in virtual elements approximations the stability term is only required to scale as the actual non-computable term and not to fulfill good approximation properties.
	
	\rv 
	The high-order \textsf{rb}VEM for the source problem will be subject of our research in the near future, as well as its extension to eigenvalue problems.
	\black
	\section{Acknowledgments}	
	
	The authors are members of GNCS--INdAM Research Group and are grateful to Daniele Prada (IMATI--CNR Pavia) for providing the mesh files employed for the numerical tests.
	
	S. Bertoluzza  is partially supported by MUR–PRIN/PNRR Bando 2022 (grant P2022BH5CB).
	
	F. Gardini is partially supported by the National Recovery and Resilience Plan, Mission 4 Component 2 - Investment 1.4 - National Center for HPC, Big Data and Quantum Computing, spoke~6.
	
	\appendix
	
	\section{Implementation aspects of \textsf{rb}VEM}\label{app:appendix}
	
	\subsection{Computing the basis functions}\label{app:app1}
	
	In this appendix we recall the main features of the reduced basis approach we designed in~\cite{credali} for efficiently solving the parametric Problem~\ref{pro:refK}. The efficiency of the method relies on the \textit{affine decomposition} property.
	Indeed, from~\eqref{eq:def_aRB}, we can write
	\begin{equation}
		\aRB(\what,\vhat;\element) = \sum_{i=1}^N \aRB(\what,\vhat;\Tri_i)\quad\text{with}\quad\aRB(\what,\vhat;\Tri_i) = \left(|\det(\Bmatr^{-1}_{\element,i})|\, \Bmatr_{\element,i}^\top \Bmatr_{\element,i}\,\grad\widehat{w},\grad\widehat{v}\right)_{\That_i}.
	\end{equation}
	Since the matrix $|\det(\Bmatr^{-1}_{\element,i})|\, \Bmatr_{\element,i}^\top \Bmatr_{\element,i}$ is symmetric, we can rewrite it in terms of a basis $\sbase^1,\sbase^2,\sbase^3$ for the space of $2\times2$ symmetric matrices, i.e.
	\begin{equation}
		\det(\Bmatr^{-1}_{\element,i})|\,  \Bmatr_{\element,i}^\top \Bmatr_{\element,i} = \sum_{\nu=1}^3 \coeff \sbase^\nu.
	\end{equation}
	Hence we find the decomposition
	\begin{equation}\label{eq:aff_a}
		\aRB(\what,\vhat;\element) = \sum_{i=1}^N \sum_{\nu=1}^3 \coeff \aRBloc(\what,\vhat)\qquad\text{with}\quad\aRBloc(\what,\vhat)=\left(\sbase^\nu\grad\what,\grad\vhat\right)_{\That_i},
	\end{equation}	
	where only the coefficients $\coeff$ depend on the parameter, whereas the form $\aRBloc$ depends only on $\what$ and $\vhat$. Such decomposition is fundamental for the design of an efficient method. Indeed, once the reduced basis space is constructed, the building blocks $\aRBloc$ are computed and stored. Thus, for each instance of the parameter, the bilinear form $\aRB$ is easily computed exploiting the pre-computed objects. The same decomposition holds for the right hand side of Problem~\ref{pro:refK}, we have
	\begin{equation}\label{eq:aff_f}
		\aRB(\hTheta_j,\vhat;\element) = \sum_{i=1}^N \sum_{\nu=1}^3 \coeff \FRBloc(\vhat)\qquad\text{with}\qquad\FRBloc(\vhat)=\left(\sbase^\nu\grad\hTheta_j,\grad\vhat\right)_{\That_i}.
	\end{equation}
	
	In order to construct our reduced basis space, we \rv first \black solve \rv offline \black Problem~\ref{pro:refK} for a sufficiently large sample of polygons $\element_\ell\in\parameters$, for $\ell=1,\dots,L$. We thus obtain a collection of snapshots $\resKiell$ defined on the fine\footnote{
		\rv As usual in the reduced basis framework, $\meshdelta$ should be sufficiently fine to guarantee ``high fidelity'' of the approximate functions $\baserb{i}$. In our tests, we set $\delta=0.01$, as already done in~\cite{credali}.\black} mesh $\meshdelta$. The application of the Proper Orthogonal Decomposition \rv to such collection \black (see~\cite[Sect. 5.1]{credali} for more details) yields the construction of the reduced space, spanned by the functions $\rb_1^\ell,\dots,\rb_N^\ell$ for $\ell=1,\dots,\rv M\black$, which are linear combinations of $\resKiell$. At this stage, by exploiting the affine decomposition, we can pre-compute and store $\aRBloc$ and $\FRBloc$, so that we have
	\begin{equation}\label{eq:aff_dec_bricks}
		\Amatr_i^\nu(j,j^\prime,\ell,\ell^\prime) = \aRBloc(\rb_j^\ell,\rb_{j^\prime}^{\ell^\prime}), 
		\qquad
		\Fmatr_i^\nu(j,j^\prime,\ell) = \FRBloc(\rb_{j^\prime}^\ell) 
	\end{equation}
	for $\ell,\ell^\prime=1,\dots,M$, $j,j^\prime=1,\dots,N$, and $\nu=1,2,3$. We point out that $M\le L$ denotes the dimension (i.e. the accuracy) of the reduced basis space.
	
	For each new polygon $\element$ with $N$ vertices, we solve \rv online \black Problem~\ref{pro:refK} in the reduced space. We thus obtain $\resKirb$ in~\eqref{eq:rb_vem_basis_d} as the linear combination
	\begin{equation}
		\resKirb = \sum_{\ell=1}^{M} \rbcoeff_\ell \rb_j^\ell,
	\end{equation}
	where the coefficients $\rbcoeff_\ell$ are solution of the $M\times M$ linear system
	\begin{equation}
		\Amatr[\element]\rbcoeff=\Fmatr[\element],
	\end{equation}
	which is efficiently assembled by means of the affine decomposition bricks~\eqref{eq:aff_dec_bricks}, pre-computed only once at offline stage. Moreover, if $M$ is small, then the above system is also cheaply solved.
	We finally obtain the functions $\baserb{i} $ in~\eqref{eq:rb_vem_basis_2} as
	\begin{equation}\label{eq:rb_vem_basis}
		\baserb{i} = \baserbhat{i}\circ\map\qquad\text{where}\qquad\baserbhat{i} = \hTheta_i + \resKirb.
	\end{equation}
	
	\subsection{Computing the matrices in the rbVEM space}
	
	We now briefly describe how the idea underlying the reduced basis method can be exploited to cheaply compute the stiffness and mass matrices in the space $\Vhrb(\element) = \PunoK\oplus\Whrb(\element)$. We highlight that the polynomial contribution is computed as in standard virtual element framework, while the contribution in $\Whrb(\element)$ is computed using the affine decomposition. The entire procedure is summarized in Algorithm~\ref{alg:method}.
	
	The construction of $\Vhrb(\element)$ as the direct sum of two spaces implies that the stiffness matrix $\stiff$ can be decomposed as
	\begin{equation}
		\stiff=\mathsf{\Pi}^\nabla+\stiff^\mathrm{rb}.
	\end{equation} 
	The matrix $\mathsf{\Pi}^\nabla_{i,j}=\aK(\PiNabla\base_i,\PiNabla\base_j)$ is computed by projecting the basis functions $\base_1,\dots,\base_N$ of the standard VEM space $\vunoloc$ through the operator $\PiNabla$ defined in~\eqref{eq:pinabla}(see~\cite{beirao2014hitchhiker}). The term $\stiff^\mathrm{rb}$ is related to the virtual part of $\vunoloc$ which is not computable through the degrees of freedom. On the other hand, such contribution turns out to be fully computable in $\Whrb(\element)$.  Thus, we can write
	\begin{equation}
		\stiff^\mathrm{rb} = (\eye(N)-\mathsf{\Pi}^\nabla)^\top\, \mathsf{\Psi} \,(\eye(N)-\mathsf{\Pi}^\nabla),
	\end{equation}
	where $\eye(N)$ is the $N\times N$ identity matrix and $\mathsf{\Psi}_{i,j}=a^\element(\baserb{i},\baserb{j})$. \rv Notice that the space $\Whrb(\element)$ is not explicitly constructed as for standard FEM. Indeed, once $\mathsf{\Psi}$ is available, it is ``restricted'' to $\Whrb(\element)$ by using the degrees of freedom $\eye(N)-\mathsf{\Pi}^\nabla$ of virtual contribution.\black
	
	We now describe how to cheaply compute $a^\element(\baserb{i},\baserb{j})$. Let us denote by $\ahat$ the bilinear form on the reference element $\refelement$ obtained by change of variable in $\aK$, that is
	\begin{equation}
		\ahat(w,v)=\sum_{i=1}^N\left(|\det(\Bmatr^{-1}_{\element,i})|\, \Bmatr_{\element,i}^\top\,\K\, \Bmatr_{\element,i}\,\grad\what,\grad\vhat\right)_{\That_i},
	\end{equation}
	hence the following identity holds
	\begin{equation}
		a^\element(\baserb{i},\baserb{j}) = \ahat(\baserbhat{i},\baserbhat{j}).
	\end{equation}
	It is clear that all computations can be thus carried out in the reference element. Indeed, by using~\eqref{eq:rb_vem_basis}, we find the expansion
	\begin{equation*}\label{eq:sa_expansion}
		a^\element(\baserb{i},\baserb{j}) = \ahat(\hTheta_i,\hTheta_j)
		+ \sum_{\ell=1}^M \rbcoeff_\ell \ahat(\rb_i^\ell,\hTheta_j)
		+ \sum_{\ell=1}^M \rbcoeff_\ell \ahat(\hTheta_i,\rb_j^\ell)
		+ \sum_{\ell,\ell^\prime=1}^M \rbcoeff_\ell \ahat(\rb_i^{\ell^\prime},\rb_j^\ell),
	\end{equation*}
	where the bricks $\ahat(\rb_i^{\ell^\prime},\rb_j^\ell)$ are computed by affine decomposition as
	\begin{equation}
		\ahat(\rb_i^{\ell^\prime},\rb_j^\ell) = \sum_{i=1}^N \sum_{\nu=1}^4 \coeff \Amatr_i^\nu(j,j^\prime,\ell,\ell^\prime) 
	\end{equation}
	and an additional element $\sbase^4$ is added to $\sbase^1,\sbase^2,\sbase^3$ for dealing with the non-symmetric $2\times2$ matrix $\det(\Bmatr^{-1}_{\element,i})|\,  \Bmatr_{\element,i}^\top\,\K\, \Bmatr_{\element,i}$ as~\cite[Sect. 7]{credali}
	\begin{equation}
		\det(\Bmatr^{-1}_{\element,i})|\,  \Bmatr_{\element,i}^\top\,\K\, \Bmatr_{\element,i} = \sum_{\nu=1}^4 \coeff \sbase^\nu.
	\end{equation}
	
	The same reasoning applies for the construction of the mass matrix $\mass$ in $\Vhrb(\element)$. As already mentioned when constructing Problem~\ref{pb:rb_eig} in $\Vhrb(\element)$, here cross terms are nonzero, thus we have
	\begin{equation}
		\mass = \mass^\nabla+\mass^\mathrm{rb}+\mass^{\nabla,\mathrm{rb}}+(\mass^{\nabla,\mathrm{rb}})^\top.
	\end{equation}
	The term $\mass^\nabla$ is the mass matrix of the polynomial part, i.e. $\mass^\nabla_{i,j}=\bK(\PiNabla\base_i,\PiNabla\base_j)$. The second term is the mass matrix in the space $\Whrb(\element)$ and is assembled similarly to $\stiff^\mathrm{rb}$ as
	\begin{equation}
		\mass^\mathrm{rb} = (\eye(N)-\mathsf{\Pi}^\nabla)^\top\, \mathsf{\Phi} \,(\eye(N)-\mathsf{\Pi}^\nabla),
	\end{equation}
	where $\mathsf{\Phi}_{i,j}=\bK(\baserb{i},\baserb{j})$. As before, we define the bilinear form on the reference element
	\begin{equation}
		\bhat(w,v)=\sum_{i=1}^N \gamma_i[\element]\, \left(\what,\vhat\right)_{\That_i}\qquad\text{where}\qquad\gamma_i[\element]=|\det(\Bmatr^{-1}_{\element,i})|,
	\end{equation}
	so that the  following relation holds
	\begin{equation*}\label{eq:sb_expansion}
		\bK(\baserb{i},\baserb{j}) = \bhat(\hTheta_i,\hTheta_j)
		+ \sum_{\ell=1}^M \rbcoeff_\ell \bhat(\rb_i^\ell,\hTheta_j)
		+ \sum_{\ell=1}^M \rbcoeff_\ell \bhat(\hTheta_i,\rb_j^\ell)
		+ \sum_{\ell,\ell^\prime=1}^M \rbcoeff_\ell \bhat(\rb_i^{\ell^\prime},\rb_j^\ell).
	\end{equation*}
	Therefore, 
	having pre-computed the building blocks $\Mmatr_i(j,j^\prime,\ell,\ell^\prime)=(\rb_j^\ell,\rb_{j^\prime}^{\ell^\prime})_{\That_i}$ at offline stage, we easily construct
	\begin{equation}
		\bhat(\rb_j^\ell,\rb_{j^\prime}^{\ell^\prime}) = \sum_{i=1}^N \gamma_i[\element]\, \Mmatr_i(j,j^\prime,\ell,\ell^\prime).
	\end{equation}
	Finally, $\mass^{\nabla,\mathrm{rb}}$ is built as
	\begin{equation}
		\mass^{\nabla,\mathrm{rb}} = (\eye(N)-\mathsf{\Pi}^\nabla)^\top\, \mathsf{\tilde\Phi},
	\end{equation} 
	with $\mathsf{\tilde\Phi}_{i,j}=\bK(\baserb{i},\PiNabla\base_j)$.
	\begin{algorithm}
		\caption{Local assembler for the \textsf{rb}VEM}
		\label{alg:method}
		\begin{algorithmic}
			\State \textbf{Data: }$\element$, element of $\mesh$
			
			\
			
			\State\textbf{Compute projectors:}
			\State\quad Project basis of $\vunoloc$ onto $\PunoK$: $\PiNabla\base_j$, for $j=1,\dots,N$
			\State\quad Build $\mathsf{\Pi}^\nabla$ such that $\mathsf{\Pi}^\nabla_{i,j}=\aK(\PiNabla\base_i,\PiNabla\base_j)$
			\State\quad Compute dofs of virtual contribution:
			\State\qquad$\mathsf{R}=\eye(N)-\mathsf{\Pi}^\nabla$, i.e. $\Rmatr_{i,j}=\base_i(\vv_j)-\PiNabla\base_i(\vv_j)$ for $i,j=1,\dots,N$
			
			\
			
			\noindent\textbf{Reduced Basis Online Phase:}
			\State\quad Given $\element$, compute functions $\baserbhat{j}$ for $j=1,\dots,N$
			\State\quad by solving reduced system $\Amatr[\element]\rbcoeff=\Fmatr[\element]$
			
			\
			
			\State\noindent\textbf{Stiffness matrix:}
			\State\quad Compute affine decomposition coefficients $\coeff$ for $i=1,\dots,N$, $\nu=1,2,3,4$
			\State\quad Construct $\ahat(\rb_j^\ell,\rb_{j^\prime}^{\ell^\prime}) = \sum_{i=1}^N\sum_{\nu=1}^4 \coeff \Amatr_i^\nu(j,j^\prime,\ell,\ell^\prime)$ by affine decomposition
			\State\quad Build stiffness  $\mathsf{\Psi}$, i.e. $\mathsf{\Psi}_{i,j}=a^\element(\baserb{i},\baserb{j})$
			\State\quad Compute $\stiff^\mathrm{rb} = \Rmatr^\top\mathsf{\Psi}\Rmatr$
			\State\quad Compute stiffness matrix in \textsf{rb}VEM framework $\stiff=\mathsf{\Pi}^\nabla+\stiff^\mathrm{rb}$
			
			\
			
			\State\noindent\textbf{Mass matrix:}
			\State\quad Build $\mathsf{M}^\nabla$ such that $\mathsf{M}^\nabla_{i,j}=\bK(\PiNabla\base_i,\PiNabla\base_j)$
			\State\quad Compute affine decomposition coefficients $\gamma_i[\element]$ for $i=,1\dots,N$
			\State\quad Construct $\bhat(\rb_j^\ell,\rb_{j^\prime}^{\ell^\prime}) = \sum_{i=1}^N \gamma_i[\element]\, \Mmatr_i(j,j^\prime,\ell,\ell^\prime)$ by affine decomposition
			\State\quad Build mass $\mathsf{\Phi}$, i.e. $\mathsf{\Phi}_{i,j}=\bK(\baserb{i},\baserb{j})$
			\State\quad Compute $\mass^\mathrm{rb} = \Rmatr^\top\mathsf{\Phi}\Rmatr$
			\State\quad Build cross terms  $\mathsf{\tilde\Phi}$, i.e. $\mathsf{\tilde\Phi}_{i,j}=\bK(\baserb{i},\PiNabla\base_j)$
			\State\quad Compute $\mass^{\nabla,\mathrm{rb}} = \Rmatr^\top\mathsf{\tilde\Phi}$
			\State\quad Compute mass matrix in \textsf{rb}VEM framework $\mass=\m{M}^\nabla+\mass^\mathrm{rb}+\mass^{\nabla,\mathrm{rb}}+(\mass^{\nabla,\mathrm{rb}})^\top$
		\end{algorithmic}
	\end{algorithm}

	\bibliography{biblio.bib}
	\bibliographystyle{abbrv}
	
\end{document}

%% file: definitions.tex
\newcommand\eig{\lambda}
\newcommand\eigf{u}
\newcommand{\eigfrb}{u^\mathrm{rb}}

\newcommand{\eigvem}{\lambda_h}
\newcommand{\eigfvem}{u_h}

\newcommand\R{\mathbb{R}}
\newcommand\bOmega{\partial\Omega}

\newcommand{\Hunozero}{H_0^1(\Omega)}
\newcommand{\HunoK}{H^1(\element)}
\newcommand{\Hr}[2]{H^{#1}(#2)}
\newcommand\Ldue{L^2(\Omega)}

\newcommand{\grad}{\nabla}

\newcommand{\ds}{\mathrm{d}s}

\newcommand{\mesh}{\mathcal{T}_h}
\newcommand{\element}{E}
\newcommand{\edge}{\varepsilon}
\newcommand{\boK}{\partial\element}

\newcommand\PiNabla{\Pi^\nabla}
\newcommand\PiZero{\Pi^0}
\newcommand\PunoK{\mathbb{P}_1(\element)}
\newcommand\Punoe{\mathbb{P}_1(\edge)}

\newcommand\vunoloc{V_1(\element)}
\newcommand\Vh{V_h}
\newcommand\Vperp{V_\perp(\element)}
\newcommand{\aK}{a^\element}
\newcommand{\bK}{b^\element}
\newcommand{\ahK}{a_h^\element}
\newcommand{\bhK}{b_h^\element}
\newcommand{\ah}{a_h}
\newcommand{\bh}{b_h}
\newcommand{\staba}{S_a^\element}
\newcommand{\stabb}{S_b^\element}

\newcommand\m[1]{\mathsf{#1}}

\newcommand\neig{d_h}

\newcommand\Ecal{\mathcal{E}}
\newcommand\Fcal{\mathcal{F}}
\newcommand{\deltahat}{\hat{\delta}}

\newcommand\vertex{\mathbf{v}}
\newcommand\Nvert{N}
\newcommand{\para}{\sigma}
\newcommand{\parb}{\tau}

\newcommand{\vv}{\mathbf{v}}
\newcommand\base{e}
\newcommand\hbase{\widehat\base}
\newcommand\xK{\mathbf{x}_\element}
\newcommand{\radiusK}{\rho_\element}
\newcommand{\parameters}{\mathscr{P}}
\newcommand{\Ker}{Ker}
\newcommand{\refelement}{\widehat{\element}}
\newcommand\hv{\widehat{\vv}}
\newcommand\x{\mathbf{x}}
\newcommand\hx{\widehat{\x}}
\newcommand{\Tri}{T}
\newcommand{\That}{\widehat{\Tri}}

\newcommand{\map}{\mathscr{L}_\element} 
\newcommand{\Bmatr}{\mathsf{L}} 
\newcommand{\Amatr}{\mathsf{A}} 
\newcommand{\Fmatr}{\mathsf{F}} 
\newcommand{\Mmatr}{\m{B}} 
\newcommand\stiff{\m{K}}
\newcommand\mass{\m{M}}
\newcommand\eye{\m{Id}}
\newcommand\Cmatr{\m{C}} 
\newcommand\Rmatr{\m{R}} 

\newcommand\aRB{\mathscr{A}}

\newcommand\Vdelta{\mathcal{V}_\delta}
\newcommand\hTheta{\widehat{\Theta}}
\newcommand\meshdelta{\mathcal{T}_\delta}
\newcommand\resKi{\widehat d_{i,\delta}[\element]}
\newcommand\resKiell{\widehat d_{i,\delta}[\element_\ell]}
\newcommand\resKirb{\widehat d_{i,\delta}^\mathrm{rb}[\element]}

\newcommand\Huzh{H^1_0(\refelement)}
\newcommand\vhat{\widehat{v}}
\newcommand\what{\widehat{w}}
\newcommand\sbase{\mathcal{S}}
\newcommand\coeff{c_i^\nu[\element]}
\newcommand\aRBloc{\mathscr{A}^{i,\nu}}
\newcommand\FRBloc{\mathscr{F}^{i,\nu}_j}

\newcommand\rb{\widehat\xi}
\newcommand\rbcoeff{\mathsf{x}^{\element,j}}
\newcommand\ahat{\widehat{a}^\element}
\newcommand\bhat{\widehat{b}^\element}

\newcommand\baserb[1]{\base_{M,#1}^\mathrm{rb}[\element]}
\newcommand\baserbhat[1]{\hbase_{M,#1}^\mathrm{rb}[\element]}
\newcommand\Vhrb{V_h^\mathrm{rb}}
\newcommand\Whrb{W_h^\mathrm{rb}}

\newcommand\vRB{v_h^\mathrm{rb}}
\newcommand\eigRB{\eig_h^\mathrm{rb}}
\newcommand\eigfRB{\eigf_h^\mathrm{rb}}

\renewcommand{\u}{u}
\newcommand{\so}{T}
\newcommand{\uhrb}{\u_h^\mathrm{rb}}
\newcommand{\vhrb}{v_h^\mathrm{rb}}
\newcommand\interpolant{I_\element}
\newcommand{\uIrb}{\u_I^\mathrm{rb}}
\newcommand{\Ecalrb}{\Ecal^\mathrm{rb}}

\newcommand\udual{\eta}

\newcommand{\K}{\mathcal{K}}

\newcommand\baseVrb[1]{\tilde{e}_{#1}^\mathrm{rb}}